\documentclass[12pt]{article}
\usepackage{macros}

\title{Whitehead torsion and the kernel of assembly}
\author{Oscar Harr}
\date{\today}

\begin{document}
\maketitle
\begin{abstract}
  For a topological space that is homeomorphic to a finite simplicial complex, we prove that the Bartels--Nikolaus assembly functor has a fully faithful right adjoint. Using this, we define for each such topological space $X$ a {\em Whitehead category}, whose K-theory is canonically identified with the Whitehead spectrum of $X$; and for a homotopy equivalence between two such spaces, we define an object in the Whitehead category of $X$ called the {\em torsion cosheaf} of the map, whose K-theory class recovers the classical Whitehead torsion.
\end{abstract}
\begin{small}
  \tableofcontents
\end{small}
\setcounter{section}{-1}
\newpage
\section{Introduction}
Let $X$ and $Y$ be triangulated spaces. A map $f\colon Y\to X$ is said to be a \tdef{simple homotopy equivalence} if up to homotopy it factors as a sequence of special homotopy equivalences coming from the combinatorics of simplicial complexes, called \tdef{elementary collapses} and \tdef{elementary expansions} (see~\cref{fig:elem-collapse}). It is natural to ask to what extent the homotopy theory of triangulated spaces is captured by the combinatorics of such collapses and expansions, or more precisely whether a homotopy equivalence $f\colon Y\to X$ is automatically a simple homotopy equivalence. A celebrated theorem of Whitehead gives a complete answer to this question, in the form of an obstruction class
\[
  \uptau (f)\in\WhiteheadSpt{X}[1]
\]
that vanishes if and only if $f$ is a simple homotopy equivalence \cite{Whitehead1950}. Here $\WhiteheadSpt{X}[1]$ is the so-called \tdef{Whitehead group} of $X$, which only depends on the fundamental groupoid of $X$, and $\uptau (f)$ is called the \tdef{Whitehead torsion} of $f$.

The purpose of this article is to promote the Whitehead group to a category $\WhiteheadCat X$ and the Whitehead torsion to an object in this category $\Tors f\in\WhiteheadCat X$. Concretely, we will have a canonical identification of $\WhiteheadSpt{X}[1]$ with the K-theory $\K_0(\WhiteheadCat X)$ under which $\uptau (f)$ is equal to the class $\lbrack\Tors f\rbrack$. For this to make sense, $\WhiteheadCat X$ had better belong to a class of categories for which K-theory is a well-behaved notion. For us this class will be the dualizable stable $\infty$-categories and K-theory will be understood in the sense of Efimov's continuous K-theory~\cite{Efimov2025}.
\begin{mainthm}\label{intro whitehead category}
  Let $\canr$ denote the category whose objects are compact absolute neighborhood retracts (ANRs) and whose morphisms are continuous maps. Let $\Prdual$ denote the $\infty$-category of dualizable stable $\infty$-categories and strongly continuous functors. There is a functor
  \[
    \WhiteheadCat{-}\colon\canr\to\Prdual
  \]
  and a natural transformation of functors valued in the $\infty$-category of spectra
  \begin{equation}
    \label{eq:transform-from-k-to-whitehead}
    \K\circ\WhiteheadCat{-}\to\desusp\WhiteheadSpt{-},
  \end{equation}
  where $X\mapsto\WhiteheadSpt{X}$ is the functor that sends $X\in\canr$ to its Whitehead spectrum. Furthermore,
  \begin{enumerate}[label=(\roman*)]
  \item The map $\K (\WhiteheadCat{X})\to\desusp\WhiteheadSpt X$ is an equivalence if $X$ admits a triangulation. In particular, for such $X$ we have $\K_0(\WhiteheadCat X)\simeq\WhiteheadSpt{X}[1]$.
  \item Given a morphism $f\colon Y\to X$ in $\canr$, there is a compact object $\Tors f\in\WhiteheadCat X$ such that if $X$ admits a triangulation then $\lbrack\Tors f\rbrack\in\K_0(\WhiteheadCat X)$ is equal to $\uptau (f)$ under the identification from (ii).
  \end{enumerate}
\end{mainthm}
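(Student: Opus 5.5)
We would construct $\WhiteheadCat X$ as the kernel of the Bartels--Nikolaus assembly functor. For $X\in\canr$, let $\mathrm{Shv}(X;\mathrm{Sp})$ be the category of sheaves of spectra on $X$; it is dualizable because $X$ is locally compact Hausdorff. Let $\mathrm{Fun}(\Pi_\infty X,\mathrm{Sp})$ be the category of local systems. Assembly is then a strongly continuous functor $\alpha_X$ from the category of cosheaves on $X$ (equivalently, the dual of $\mathrm{Shv}(X)$) to local systems. On compact objects it sends a constructible (co)sheaf to its total "global sections with local coefficients", i.e.\ it is left Kan extension along $X\to\Pi_\infty X$. We define $\WhiteheadCat X:=\ker(\alpha_X)$, the fiber of $\alpha_X$ in $\Prdual$. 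Functoriality in $\canr$ comes from proper pushforward of cosheaves, which commutes with assembly. Hence $\WhiteheadCat{-}$ is a functor $\canr\to\Prdual$, because fibers exist in $\Prdual$.

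For the transformation \eqref{eq:transform-from-k-to-whitehead}, we use that continuous K-theory is a localizing invariant on $\Prdual$. It therefore sends the fiber sequence $\WhiteheadCat X\to\mathrm{CoShv}(X)\to\mathrm{Fun}(\Pi_\infty X,\mathrm{Sp})$ to a fiber sequence in spectra, provided $\alpha_X$ is a localization, i.e.\ has a fully faithful right adjoint. This is the theorem announced in the abstract. By Efimov's computation, $\K^{\mathrm{cont}}(\mathrm{CoShv}(X))\simeq \Sigma^{-1}$-shifted, or rather identified with, the homology $X_+\wedge\K(\mathbb S)$, and $\K(\mathrm{Fun}(\Pi_\infty X,\mathrm{Sp}))\simeq \K(\mathbb S[\Omega X])$. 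One must check that under these identifications $\K(\alpha_X)$ is the classical assembly map. Once that is done, the fiber of assembly is $\desusp\WhiteheadSpt X$ by definition, and this gives the natural map. For general compact ANRs we only claim a natural map. This is because the fully faithful right adjoint, and hence exactness of the sequence, is only established for triangulable $X$. There the map is an equivalence, proving (i), and $\pi_0$ gives $\K_0(\WhiteheadCat X)\simeq\pi_1\WhiteheadSpt X$.

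For (ii), given $f\colon Y\to X$, consider the pushforward $f_!\underline{\mathbb S}_Y$ of the constant cosheaf and the constant cosheaf $\underline{\mathbb S}_X$; both are compact. If $f$ is a homotopy equivalence, the unit map becomes an equivalence after assembly, since assembly of the constant cosheaf is $\mathbb S[\Omega]$-module $\Sigma^\infty_+\widetilde X$ and $f$ induces an equivalence. Hence its cofiber $\Tors f$ lies in $\WhiteheadCat X$. If $f$ is not a homotopy equivalence, we instead take the cofiber of the right-adjoint-corrected map, or the projection of the cofiber to $\WhiteheadCat X$ via the localization, so that the object is always defined. Identifying $[\Tors f]$ with $\uptau(f)$ then proceeds in three steps. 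First, the assignment is additive and invariant under homotopy, using a cofinality argument on mapping cylinders. Second, it vanishes on elementary expansions, because then $f_!\underline{\mathbb S}\to\underline{\mathbb S}$ is an equivalence of cosheaves (a collapse is a cell-like map). Third, it agrees with the boundary map of the fiber sequence applied to the class of the chain-level comparison. Classically, $\uptau(f)$ is the image under $\K_1(\mathbb S[\Omega X])\to\WhiteheadSpt X[1]$ of the torsion of the cellular chain equivalence, and a standard diagram chase identifies this with the boundary of the lift of $[\mathrm{cofib}]\in\K_0(\mathrm{CoShv}(X))$.

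The main obstacle is the existence of a fully faithful right adjoint to assembly for triangulable $X$. Our plan for it is to reduce to the constructible, simplicial setting, where cosheaves on a simplicial complex $K$ are modeled by functors on the poset of simplices. On compact objects there, assembly is the localization $\mathrm{Fun}(\mathrm{sd}K,\mathrm{Sp})\to\mathrm{Fun}(|K|^{\simeq},\mathrm{Sp})$ inverting all morphisms, whose right adjoint is fully faithful. We would then pass to the colimit over subdivisions, showing that the cosheaf category is the appropriate $\Prdual$-colimit or limit and that the right adjoints are compatible. Subdivision invariance of the resulting right adjoint is where we expect the real work.
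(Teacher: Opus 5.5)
\textbf{Verdict: genuine gap.} Your definition of $\WhiteheadCat X$ as the kernel of $\BNass$, the comparison map, and the torsion object $\mathrm{cofib}(f_?\WallObject Y\to\WallObject X)$, together with its membership in the kernel for homotopy equivalences, all match the paper. Two cautions on this part. First, the source of assembly must be $\ccoShv X=\Homd(\Shv X,\Sp)$, the internal dual in $\Prdual$. It cannot be the $\Prlst$-dual $\Fun^L(\Shv X,\Sp)$: that category is Verdier-equivalent to $\Shv X$ and its continuous K-theory is cohomological. Second, for general $X$ the map to $\desusp\WhiteheadSpt{X}$ comes from the canonical nullhomotopy of $\K(\WhiteheadCat X)\to\K(\ccoShv X)\to\K(\Sp^{\type{X}})$; no localization is needed.

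The gap is part (i), the heart of the theorem. You leave the fully faithful right adjoint unproved, and the route you sketch cannot work as a colimit over subdivisions. Knowing that assembly restricted to $\mathrm{sd}^nK$-constructible objects is the localization $\Fun(\mathrm{sd}^nK,\Sp)\to\Sp^{\type{X}}$ says nothing about maps from non-constructible objects of $\ccoShv X$ into $R(-)$. Moreover, $\ccoShv X$ is not the colimit of the constructible categories. That colimit is computed in $\Prlst$ along functors preserving compact objects, so it is compactly generated. Its $\K_0$ is a colimit, along injective maps, of free abelian groups indexed by the simplices of $\mathrm{sd}^nK$, hence of infinite rank once $\dim K>0$. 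By contrast, $\K_0(\ccoShv X)\cong\pi_0(\A(\pt)\otimes\type{X})\cong\ZZ^{\pi_0X}$. Any limit presentation would need exactly the control over $\Homd(\Shv X,\Sp)$ that you have not supplied.

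\textbf{What the paper uses instead: codescent.} It first treats contractible $X$. There $\psi_X^*=X^*$, so $\BNass$ is the functor $\Homd(\Shv X,\Sp)\to\Homd(\Sp,\Sp)\simeq\Sp$ induced by $X^*$. Since $X_\sharp\dashv X^*$ are strongly continuous with $X_\sharp X^*\simeq\mathrm{id}$, $\BNass$ is a split epimorphism and a reflective localization. The paper then proves that $\ccoShv{-}$ sends a finite closed cover by ANRs to a pushout in $\Prdual$ (\cref{thm: codescent}). This uses Efimov's description of $\Homd$ inside $\mathrm{Ind}$ of $\omega_1$-compact objects together with closed descent for cosheaves. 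Combined with the nerve theorem for $\Sp^{\type{-}}$, this is applied to the closed-star cover of a triangulation. The argument closes because a colimit of reflective localizations is again one.

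\textbf{On part (ii).} Your third step is the paper's actual argument, and it needs to be carried out rather than called a diagram chase. Concretely: filter $\WallObject X$ by cells; use Mayer--Vietoris to identify the assembled graded pieces with shifted free $\SS[\Omega X]$-modules; then lift null-paths using injectivity of $\pi_0(\A(\pt)\otimes K)\to\A_0(K)$. Your steps 1--2 are unnecessary and partly wrong. An elementary expansion $i$ has $\Tors i\neq 0$, since a small open set in the new simplex has empty preimage; only the collapse is cell-like. Also, $\BNass(\WallObject X)$ is the constant local system $\SS_{\type{X}}$, not $\Sigma^\infty_+\widetilde X$. Finally, your projection trick for non-equivalences is not needed: the paper only places $\Tors f$ in $\WhiteheadCat X$ for weak homotopy equivalences.
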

\begin{figure}[h]
  \centering
  \def\svgwidth{0.8\textwidth}
\begingroup%
  \makeatletter%
  \providecommand\color[2][]{%
    \errmessage{(Inkscape) Color is used for the text in Inkscape, but the package 'color.sty' is not loaded}%
    \renewcommand\color[2][]{}%
  }%
  \providecommand\transparent[1]{%
    \errmessage{(Inkscape) Transparency is used (non-zero) for the text in Inkscape, but the package 'transparent.sty' is not loaded}%
    \renewcommand\transparent[1]{}%
  }%
  \providecommand\rotatebox[2]{#2}%
  \newcommand*\fsize{\dimexpr\f@size pt\relax}%
  \newcommand*\lineheight[1]{\fontsize{\fsize}{#1\fsize}\selectfont}%
  \ifx\svgwidth\undefined%
    \setlength{\unitlength}{283.46456693bp}%
    \ifx\svgscale\undefined%
      \relax%
    \else%
      \setlength{\unitlength}{\unitlength * \real{\svgscale}}%
    \fi%
  \else%
    \setlength{\unitlength}{\svgwidth}%
  \fi%
  \global\let\svgwidth\undefined%
  \global\let\svgscale\undefined%
  \makeatother%
  \begin{picture}(1,0.22238001)%
    \lineheight{1}%
    \setlength\tabcolsep{0pt}%
    \put(0,0){\includegraphics[width=\unitlength,page=1]{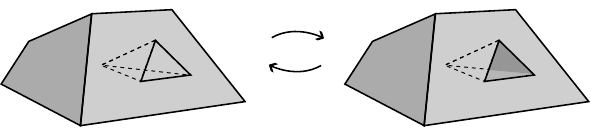}}%
    \put(0.44939416,0.18947319){\color[rgb]{0,0,0}\makebox(0,0)[lt]{\lineheight{1.25}\smash{\begin{tabular}[t]{l}\footnotesize{collapse}\end{tabular}}}}%
    \put(0.26099236,0.10435791){\color[rgb]{0,0,0}\makebox(0,0)[lt]{\lineheight{1.25}\smash{\begin{tabular}[t]{l}$\sigma$\end{tabular}}}}%
    \put(0.2983355,0.00108169){\color[rgb]{0,0,0}\makebox(0,0)[lt]{\lineheight{1.25}\smash{\begin{tabular}[t]{l}$\Sigma$\end{tabular}}}}%
    \put(0.89100236,0.00108169){\color[rgb]{0,0,0}\makebox(0,0)[lt]{\lineheight{1.25}\smash{\begin{tabular}[t]{l}$\Sigma '$\end{tabular}}}}%
    \put(0.44236802,0.06945129){\color[rgb]{0,0,0}\makebox(0,0)[lt]{\lineheight{1.25}\smash{\begin{tabular}[t]{l}\footnotesize{expansion}\end{tabular}}}}%
  \end{picture}%
\endgroup%

  \caption{An elementary collapse of a complex $\Sigma$ is a subcomplex $\Sigma '\subset\Sigma$ such that $\Sigma = \Sigma '\cup_{C (\partial\sigma)}C\sigma$ for some simplex $\sigma$. On geometric realizations, there is an associated retract $|\Sigma |\to |\Sigma '|$ (unique up to contractible choice) supported on the cone $C\sigma$. Conversely, the inclusion $|\Sigma '|\hookrightarrow |\Sigma |$ is called an elementary expansion. The figure shows an elementary collapse in which $\sigma$ is a 2-simplex.}
  \label{fig:elem-collapse}
\end{figure}

By the Cairns--Whitehead triangulation theorem \cite{Cairns1935,Whitehead1940}, the conclusions in (i) and (ii) apply in particular to compact $C^1$-manifolds.

\medskip

The most difficult part of~\cref{intro whitehead category} is (i), which we deduce from the following technical theorem:
\begin{mainthm}[\cref{thm: epimorphism}]\label{intro epimorphism}
  If $X$ is a topological space that has the homeomorphism type of a finite simplicial complex, then the Bartels--Nikolaus assembly functor (see~\cref{subsec: bartels nikolaus})
  \begin{equation}
    \label{eq:intro-bnass}
    \BNass\colon\ccoShv {X}\to\Sp^{\type X}
  \end{equation}
  has a fully faithful right adjoint.
\end{mainthm}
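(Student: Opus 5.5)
Throughout I take $\ccoShv X$ to be the category of spectrum-valued cosheaves on $X$ and $\Sp^{\type X}$ the category of local systems (parametrized spectra) on the homotopy type of $X$. The Bartels--Nikolaus assembly functor $\BNass$ should be the left Kan extension along the ``shape'' map, i.e.\ the colimit-preserving functor sending the cosheaf represented by an open $U\subseteq X$ to the local system $\Sigma^\infty_+$ of the fibres of $\type U\to\type X$. Since $\BNass$ is strongly continuous between dualizable categories, it has a colimit-preserving right adjoint $R$.

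I plan to identify $R$ explicitly: for a local system $L$, $R(L)$ is the cosheaf $U\mapsto \colim_{\type U} L|_U$, the homology of $U$ with coefficients in $L$. Full faithfulness of $R$ amounts to the counit $\BNass R(L)\to L$ being an equivalence for every $L$. Both sides preserve colimits in $L$, and $\Sp^{\type X}$ is generated under colimits by the corepresentables $\Sigma^\infty_+\Omega_{x}$, i.e.\ $p_{x!}\mathbb S$ for points $x\in X$. So it suffices to check the counit on these generators, which reduces the problem to a statement about $\BNass$ applied to a specific cosheaf.

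To make this computable I will use the triangulation. Choose a homeomorphism $X\cong|K|$ with $K$ a finite simplicial complex. Open stars of simplices form a basis of contractible opens closed under finite intersections (an intersection of open stars is empty or an open star), indexed by the poset $P=\mathrm{sd}(K)$ of simplices. The key intermediate step is an exit-path/constructible comparison. Cosheaves on $X$ that are constant on the open strata of $K$ form the full subcategory $\mathrm{Fun}(P,\Sp)$, where the exit-path category of a simplicial complex is its face poset. On this subcategory $\BNass$ becomes left Kan extension along the localization $P\to |P|\simeq\type X$. That functor has a fully faithful right adjoint, namely restriction along a localization, so on constructible cosheaves the counit is an equivalence.

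It remains to see that $R(L)$ lies in this subcategory, or at least that $\BNass R(L)$ can be computed there. This is because $U\mapsto\colim_{\type U}L$ is locally constant on each open star, and its value on a star is $L$ at any point in it. Hence $R(L)$ is the image of $L|_P\in\mathrm{Fun}(P,\Sp)$ under the inclusion of constructible cosheaves. We also need $\BNass$ to restrict compatibly, i.e.\ the square formed by the inclusion $\mathrm{Fun}(P,\Sp)\hookrightarrow\ccoShv X$, the two assembly functors, and the localization $\type P\simeq\type X$ commutes. I expect this compatibility to be the main obstacle. It requires knowing that $\BNass$ is computed on a basis of contractible opens, so that the cosheaf represented by each open star goes to $\Sigma^\infty_+$ of a contractible space. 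It also requires that the constructible embedding preserves the relevant colimits. I would attack it by writing any constructible cosheaf as a colimit of representables of open stars, which is possible since stars are cofinal among neighborhoods of barycenters, and using colimit-preservation of $\BNass$.
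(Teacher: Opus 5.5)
\paragraph{Setup.} Your setup does not match the paper's. Here $\ccoShv X$ is not the cosheaf category $\coShv X=\Fun^L(\Shv X,\Sp)$ but the functional dual $\Homd(\Shv X,\Sp)$ in $\Prdual$. Only its compact objects are honest functors on $\Shv X$, namely the strongly continuous ones; general objects are not cosheaves in any direct sense. Also, $\BNass$ is defined through the copairing $\delta$, not as a Kan extension along a shape map.

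Much of your plan can still be transported. Let $\iota\colon\Fun(P,\Sp)\hookrightarrow\Shv X$ be the inclusion of constructible sheaves. Then $\iota^L$ is strongly continuous. The functor $j=\Homd(\iota^L,\Sp)\colon\Fun(P^{\mathrm{op}},\Sp)\to\ccoShv X$ is fully faithful, since on compact objects it is precomposition with the localization $\iota^L$. A right-adjoint computation with $\delta$, of the kind in the proof of \cref{lem:induced-map-on-parametrized-spectra} and using $\psi_X^*\simeq\iota\lambda^*$, should give $\BNass\circ j\simeq\lambda_!$. So the compatibility you single out as the main obstacle is the comparatively routine part.

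\paragraph{The gap.} The genuine gap is the step you treat as automatic: that $R(L)$ lies in the essential image of $j$, i.e.\ $R\simeq j\lambda^*$. From $\BNass\circ j\simeq\lambda_!$ you only get $j^R R\simeq\lambda^*$, and hence only that the counit $\BNass R\to\id$ is a split epimorphism. Upgrading this requires $\Map(A,j\lambda^*L)\simeq\Map(\BNass A,L)$ for \emph{every} $A\in\ccoShv X$.

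Constructible objects are far from generating $\ccoShv X$. If they did, $j$ would be an equivalence, and $\K(\ccoShv X)$ would be $\bigoplus_P\K(\SS)$. But the paper identifies it with $\A(\pt)\otimes\type X$, and the two already differ on $\pi_0$ unless $X$ is discrete. Your justification, local constancy of $U\mapsto\colim_{\type U}L$, presupposes the formula for $R$ that you announced but never derived. Deriving it means controlling arbitrary objects of $\Homd(\Shv X,\Sp)$, which is exactly where Efimov's structure theory is needed.

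\paragraph{The paper's route.} The paper avoids computing $R$ altogether. For contractible $X$ one has $\BNass\simeq\Homd(X^*,\Sp)$. Since $X_\sharp X^*\simeq\id$ with both functors strongly continuous, $\BNass$ is a split epimorphism in $\Prdual$, with fully faithful right adjoint $\Homd(X_\sharp,\Sp)$. For general $X$, the paper takes a decent cover by closed stars. It then combines \cref{thm: codescent} (the Efimov input) with the nerve theorem to exhibit $\BNass$ as a colimit of such reflective localizations.
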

Another proof of this result has been found independently by Nikolaus and Ramzi. Our proof is based on codescent to the case where $X$ is contractible. For this to work, we need to show that the $\infty$-category appearing on the left-hand side of~\eqref{eq:intro-bnass} has codescent:
\begin{mainthm}[\cref{thm: codescent}]\label{intro codescent}
  Let $X$ be a second-countable and locally compact ANR. If $Y_0$ and $Y_1$ are closed subspaces of $X$ with $Y_{01} = Y_0\cap Y_1$ such that $Y_0$, $Y_1$, and $Y_{01}$ are again ANRs, then the square
  \begin{equation*}
    \begin{tikzcd}
      \ccoShv{Y_{01}}\arrow[r]\arrow[d]
      & \ccoShv{Y_{1}} \arrow[d] \\
      \ccoShv{Y_{0}}\arrow[r]
      & \ccoShv{Y}
    \end{tikzcd}
  \end{equation*}
  is a pushout in the $\infty$-category of dualizable categories.
\end{mainthm}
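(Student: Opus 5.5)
The plan is to reduce the statement to a Mayer--Vietoris statement about supports, via two standard reductions. First, by Efimov's results the forgetful functor from dualizable categories (with strongly continuous functors) to $\Pr^L$ preserves small colimits. So it suffices to show the square is a pushout in $\Pr^L$, provided all four functors are strongly continuous. Second, a pushout in $\Pr^L$ is computed as the pullback in $\Pr^R$ of the diagram of right adjoints. Write $i_0\colon Y_0\hookrightarrow Y$, $i_1\colon Y_1\hookrightarrow Y$ and $i_{01}$ for the closed inclusions (here $Y=Y_0\cup Y_1$). The horizontal and vertical functors in the square are the cosheaf pushforwards $i_*$ along closed embeddings. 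I will first check that each $i_*$ is strongly continuous, i.e.\ that its right adjoint $i^!$ preserves colimits. Using Verdier duality $\ccoShv{Z}\simeq\Shv(Z;\Sp)$, available since everything is locally compact Hausdorff, $i_*$ becomes the sheaf pushforward $i_*=i_!$ and $i^!$ becomes the exceptional restriction. For a closed embedding $i^!$ is the fibre of $F\to j_*j^*F$ for the open complement $j$, which preserves colimits because $j_*$ does on locally compact spaces. This is where second countability and local compactness enter.

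The main step is to show that the canonical functor
\[
\ccoShv{Y}\longrightarrow \ccoShv{Y_0}\times_{\ccoShv{Y_{01}}}\ccoShv{Y_1},\qquad F\mapsto (i_0^!F,\ i_1^!F,\ \text{canonical identification on }Y_{01}),
\]
is an equivalence.

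\emph{Full faithfulness} will follow from a Mayer--Vietoris square of local cohomology. Let $\Gamma_Z=i_{Z*}i_Z^!$ for closed $Z$. I will show that the square
\[
\Gamma_{Y_{01}}F\to\Gamma_{Y_0}F,\qquad \Gamma_{Y_{01}}F\to\Gamma_{Y_1}F,\qquad \Gamma_{Y_0}F\to F,\qquad \Gamma_{Y_1}F\to F
\]
is a pushout. This is because $\Gamma_Z F=\mathrm{fib}(F\to j_{Z*}j_Z^*F)$, and the complements $U_0=Y\setminus Y_0$, $U_1=Y\setminus Y_1$ satisfy $U_0\cap U_1=\emptyset$ and $U_0\cup U_1=Y\setminus Y_{01}$. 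Ordinary open Mayer--Vietoris for $U_0,U_1$ then gives exactly this square after taking fibres. Mapping out of the square identifies $\mathrm{Map}(F,G)$ with the pullback of the corresponding mapping spaces computed on $Y_0$, $Y_1$ and $Y_{01}$.

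\emph{Essential surjectivity} comes from an explicit inverse. Given a triple $(F_0,F_1,\alpha)$, define $F$ as the pushout of $i_{0*}F_0\leftarrow i_{01*}F_{01}\to i_{1*}F_1$. Then verify $i_0^!F\simeq F_0$ using base change for $i^!$ along closed embeddings ($i_0^!i_{1*}\simeq i_{01*}'\,i_{01}'^{!}$, with $i'_{01}\colon Y_{01}\to Y_1$ and $i'_{01*}\colon Y_{01}\to Y_0$). Base change holds since $Y_0\cap Y_1=Y_{01}$ is a genuine intersection of closed sets.

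The step I expect to be the main obstacle is making sure this argument applies to the paper's $\ccoShv{-}$, which need not literally coincide with all cosheaves of spectra. If $\ccoShv{Z}$ is a full subcategory singled out by a continuity or homotopy-invariance condition, I must show two things: that $i_*$ and $i^!$ preserve it, and that the gluing pushout above stays inside it. This is where I expect the ANR hypothesis on $Y_0,Y_1,Y_{01}$ to be used. ANRs are locally contractible and homologically locally connected, so exceptional restriction of a locally constant (or suitably continuous) cosheaf along an ANR-pair stays in the class, and the local cohomology $\Gamma_{Y_{01}}$ behaves well. I would first try to prove the preservation statements stalkwise, using that each point of an ANR has a neighbourhood basis of contractible sets. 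If this fails, the fallback is to prove the pushout in the larger category of all cosheaves as above. I would then deduce the claim for $\ccoShv{-}$ by showing that the inclusion into all cosheaves has a colimit-preserving right adjoint compatible with the $i^!$; then the pullback of right adjoints restricts correctly.
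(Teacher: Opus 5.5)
There is a genuine gap, and it sits in the identification your whole argument rests on. Your first reduction, that colimits in $\Prdual$ can be computed in $\Prlst$, matches the paper. But Verdier duality identifies $\Shv{Z}$ with $\coShv{Z}=\Fun^L(\Shv{Z},\Sp)$, which is the dual of $\Shv{Z}$ in $\Prlst$. The paper's $\ccoShv{Z}$ is instead $\Homd(\Shv{Z},\Sp)$, the internal Hom in \emph{dualizable} categories, and it is a genuinely different category. For $Z=S^1$, the paper's identification $\K(\ccoShv{Z})\simeq\A(\pt)\otimes\type{Z}$ is homology, whereas Efimov's $\K(\Shv{Z})\simeq\Gamma(Z;\K(\SS))$ is cohomology; already $\pi_0$ differs ($\ZZ$ versus $\ZZ\oplus\ZZ/2$).

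Your strong-continuity check also fails in the sheaf model, because $j_*$ does not preserve filtered colimits. For $j\colon(0,1]\hookrightarrow[0,1]$, the functor $\Gamma((0,1],-)=\Gamma([0,1],j_*j^*(-))$ sends the filtered colimit $\bigoplus_k(1/k)_*\SS$ to $\prod_k\SS$ rather than $\bigoplus_k\SS$. Hence $i^!$ for $\{0\}\hookrightarrow[0,1]$ is not colimit-preserving, and the sheaf pushforwards $i_*$ are not morphisms of $\Prdual$ at all. By contrast, the functors $f_?$ on $\ccoShv{-}$ are strongly continuous by construction. What your local-cohomology Mayer--Vietoris and gluing argument does correctly prove is that the square of \emph{uncompleted} cosheaves $\coShv{-}\simeq\Shv{-}$ is a pushout in $\Prlst$, i.e.\ closed descent for sheaves in dual form. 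That is essentially the input the paper imports from Ramzi (in $\omega_1$-presentable form). The paper explicitly stresses that codescent for $\ccoShv{-}$ is not a formal consequence of it.

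Your fallback does not repair this, because $\ccoShv{Z}$ is not a full subcategory of $\coShv{Z}$. Only its compact objects embed, as the strongly continuous functors, and the evaluation-induced functor $\ccoShv{Z}\to\coShv{Z}$ is not fully faithful. To see this, take $Z=[0,1]$ and $x$ an interior point. The stalk functor at $x$ and the functors $\Gamma(K,-)$ for compact intervals $K\ni x$ are all compact in $\ccoShv{Z}$, and in $\coShv{Z}$ the former is the filtered colimit of the latter. Full faithfulness would then make the stalk functor a retract of some $\Gamma(K,-)$. Under Verdier duality and applying $x^!$, that would make $\SS$ a retract of $\Sigma^{-1}\SS$, which is impossible. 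So there is no coreflection to restrict along, and stalkwise ANR arguments do not touch the real issue.

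The missing idea is Efimov's model of the completed category: $\ccoShv{Z}^\vee$ embeds fully faithfully into $\Fun(\coShv{Z}^{\omega_1},\Sp)$, with essential image generated under colimits by the functors $G\mapsto G(F)$ for $F\in\Shv{Z}^{\omega_1}$. The paper's argument then runs as follows.
\begin{enumerate}
\item It applies the uncompleted gluing statement only to these ambient categories. Since $\Ind((-)^{\omega_1})$ is a right adjoint, this gives a fully faithful comparison with the pullback there.
\item The Beck--Chevalley lemma and proposition make the square of the $\ccoShv{-}^\vee$ a face of the resulting cube.
\item The substantive step is to show that a functor on $\coShv{Y}^{\omega_1}$ whose restrictions lie in the essential images for $Y_0$, $Y_1$, $Y_{01}$ is a pushout of colimits of evaluations at pushed-forward $\omega_1$-compact sheaves $\iota_*F_j$. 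Such a functor therefore lies in the image for $Y$ by Efimov's Prop.~3.13.
\end{enumerate}
Second countability enters through these $\omega_1$-compactness arguments, not through colimit preservation of $j_*$.
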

\subsection{Sales pitch}
There are two advantages to working with Whitehead torsion via the machinery provided by $\WhiteheadCat X$ and $\Tors f$:

Firstly, the $\infty$-category $\WhiteheadCat X$ is a much richer object than the Whitehead spectrum $\WhiteheadSpt X$; whereas the only structure possessed by the latter is addition and its derived consequences, the former has limits, colimits, filtrations, mapping spectra, a notion of support, etc. Furthermore, one can define useful functors $\WhiteheadCat X\to\WhiteheadCat Y$ besides the ones that induce the usual maps on Whitehead spectra.

Secondly,
\begin{quote}
  \emph{none of the categories, objects, functors, natural transformations, or identifications mentioned in~\cref{intro whitehead category} depend on a choice of triangulation(s)}.
\end{quote}
In particular, note that the conclusions in part (i) and (ii) of the theorem only depend on $X$ having the {\em property} of admitting a triangulation. In fact, we prove (i) and (ii) for a wider class of spaces, namely those that have a ``good cover\footnote{in the sense of the Borsuk nerve theorem} by closed subspaces''. A common technical burden affecting geometric topology is the tension between discrete defined invariants (e.g.~Wall class, Whitehead torsion) and continuous techniques (e.g.~flows). In our approach to Whitehead torsion, the object $\Tors f$ is directly amenable to continuous methods.

As a bonus point, the object $\Tors f$ has a very concrete description: it is the cosheaf that measures the extent to which $f\colon X\to Y $ is a homotopy equivalence {\em locally} on $X$. This lends itself well to the ideas of \tdef{controlled topology} in the sense of Chapman, Ferry, and Quinn. We will explore this connection further in ongoing work with Christian Kremer

\subsection{What we do in this paper}
In~\cref{sec: whitehead cat}, we first give an overview of the dependencies that we import from Bartels and Nikolaus's work on simple homotopy theory and geometric topology. After this, we define the functor and natural transformation appearing in~\cref{intro whitehead category}. We also prove~\cref{intro epimorphism} and~\cref{intro codescent}, from which we deduce part (i) of~\cref{intro whitehead category}. In~\cref{sec: htpy eq}, we define the object $\Tors f$ for a map between compact ANRs, and show part (ii) of~\cref{intro whitehead category} along with some basic properties of $\Tors f$.  
\subsection{Acknowledgements}
I am deeply grateful to Lars Hesselholt for posing the questions on which this article is based. I also wish to thank Maxime Ramzi for illuminating discussions, especially about the construction of $\Tors f$, and Christian Kremer for asking about the relationship between $\Tors f$ and controlled topology. Finally, thanks to Marceline Jin-Harr for many insightful comments.
I was partially supported by the Danish National Research Foundation through the Copenhagen Centre for Geometry and Topology (DNRF151), as well as Dan Petersen's Wallenberg Scholar fellowship.

\section{Conventions}
We use the theory of $\infty$-categories, generally following Lurie~\cite{Lurie2009,Lurie2017} in terminology. In fact, as is becoming common we will simply refer to an $\infty$-category as a category, and write 1-category if we wish to stress that a category is a category in the classical sense. 
\begin{itemize}
\item We refer to objects in the $\infty$-category of ``spaces''/$\infty$-groupoids/homotopy types as \tdef{anima}.
\item For convenience, we fix uncountable Grothendieck universes $\mathcal U\in\mathcal U'\in\mathcal U''$ of \tdef{small}, \tdef{large}, and \tdef{very large} sets. Topological spaces, anima, and spectra are assumed to be small; categories are (unless otherwise stated) large. 
\item Given a topological space $X$, we let $\mdef{\type X}$ denote its associated anima.
\item We use the formalism of six operations for spectral sheaves on locally compact Hausdorff spaces (see Volpe's article~\cite{Volpe2025}), which associates to a locally compact Hausdorff space its category of spectral sheaves $\Shv X$ and to a continuous map $f\colon Y\to X$ a pair of adjunctions $f^*\dashv f_*$ (pullback/pushforward) and $f_!\dashv f^!$ (exceptional pushforward/pullback). We follow the notation used by Volpe.
\item We also use the formalism of five operations on parametrized spectra on anima. The classical reference for this theory is~\cite{May_Sigurdsson2006}, which is written in the language of model categories. One can find some useful references for the $\infty$-categorical enhancement of this theory scattered throughout the literature, see~\cites[][App.~A]{Land2022}[][]{Cnossen2023}[][Sec.~4]{Picot2025}. This theory associates to an anima $K$ its category of parametrized spectra $\Sp^K = \Fun (K,\Sp )$ and to a map $f\colon K\to L$ of anima a triple of adjunctions $f_\sharp\dashv f^*\dashv f_*$.
\item Given a topological space $X$, we abuse notation by writing $X\colon X\to\pt$ also for the projection from $X$ to a point. We follow the same convention for anima. Note that this is consistent with the way adjectives work in these categories (and generally in the presence of six-functor formalisms); a topological space $X$ is proper (i.e.~compact) if and only if the projection from $X$ to a point is proper.
\item Given a topological space $X$ that is locally of singular shape in the sense of Lurie, we let $\mdef{\psi_X^*}\colon\Sp^{\type X}\hookrightarrow\Shv X$ denote the inclusion of local systems into sheaves. This functor has a left adjoint $(\psi_X)_\sharp$ and a right adjoint $(\psi_X)_*$.
\item Given presentable stable categories $\mathcal C$ and $\mathcal D$, we let $\mdef{\Fun^L(\mathcal C,\mathcal D)}$ denote the category of colimit-preserving functors from $\mathcal C$ to $\mathcal D$, or equivalently the category of functors that admit a right adjoint. We let $\mdef{\Fun^{LL}(\mathcal C,\mathcal D)}\subseteq \Fun^L(\mathcal C,\mathcal D)$ denote the full subcategory spanned by \tdef{strongly continuous} functors; i.e. colimit-preserving functors whose right adjoint admits a further right adjoint.
\item We let $\mdef{\largecats}$ denote the (very large) category of large categories. We also let $\mdef{\Prlst}$ (resp. $\mdef{\Prdual}$) denote the (very large) category of presentable stable $\infty$-categories (resp. dualizable stable $\infty$-categories) and colimit-preserving (resp. strongly continuous) functors between them.
\end{itemize}

\section{The Whitehead category}
\label{sec: whitehead cat}
\subsection{The Bartels--Nikolaus assembly functor}\label{subsec: bartels nikolaus}
Our work is based on Bartels and Nikolaus's work on geometry topology and simple homotopy theory, which will appear in an upcoming article of theirs. In this subsection, we give an overview of the definitions and results that we import from this theory. For a broader account, we refer to Nikolaus's excellent lecture series on this topic~\cite{Nikolaus2024}, as well as the notes~\cite{Krause_Nikolaus_Putzstuck2024} for an expository treatment of the theory of dualizable categories and continuous K-theory.

\medskip

The Lurie tensor product on presentable categories restricts to a symmetric monoidal structure on the (very large) category of dualizable stable categories $\Prdual$. This symmetric monoidal structure is closed, and we let $\Homd (-,-)\colon (\Prdual )^{\mathrm{op}}\times\Prdual\to\Prdual$ denote the associated internal mapping object. This object is generally very different from the internal mapping object $\Fun^L(-,-)$ in $\Prlst$.
\begin{defn}
  The category of \tdef{completed cosheaves} on a locally compact Hausdorff space $X$, denoted $\mdef{\ccoShv X}$, is the dualizable stable category
  \[
    \ccoShv X = \Homd (\Shv X,\Sp ).
  \]
\end{defn}
That is, $\ccoShv X$ is the functional/weak dual of $\Shv X$ in the category of dualizable stable categories. It is essentially never a strong dual to $\Shv X$, as the latter is dualizable in $\Prdual$ if and only if $X$ is finite and discrete \cite{Harr2023b}.

\begin{rmk}
  The assignment $X\mapsto\Shv X\in\Prdual$ is contravariantly functorial in proper maps, since we can send $f\colon Y\to X$ to the pullback $f^*\colon\Shv X\to\Shv Y$, which is strongly continuous on account of the adjunctions $f^*\dashv f_*\simeq f_!\dashv f^!$. It follows that $X\mapsto\ccoShv X$ is {\em covariantly} functorial in proper maps. The functor induced by a proper map $f\colon Y\to X$ is denoted $\mdef{f_?}\colon\ccoShv Y\to\ccoShv X$.
\end{rmk}
\begin{rmk}
  General objects of $\ccoShv X$ are somewhat unwieldy\footnote{There is however a convenient collection of objects that generate $\ccoShv X$ under colimits \cite[Prop~3.12]{Efimov2025b}, as we will use later}. On the other hand, compact objects of $\ccoShv X$ are classified by strongly continuous functors $\Sp\to\ccoShv X$, so by the defining property of the internal mapping object we have
  \[
    \ccoShv X^{\omega}\simeq\Fun^{LL} (\Sp ,\ccoShv X)\simeq\Fun^{LL}(\Shv X,\Sp ).
  \]
  That is, $\ccoShv X^{\omega}$ identifies with the full subcategory $\Fun^{LL}(\Shv X,\Sp )\subseteq\Fun^L(\Shv X,\Sp ) = \coShv X$ spanned by strongly continuous functors.
\end{rmk}

The centerpiece Bartels and Nikolaus's work is their assembly functor, whose definition we now recall. Let $X$ be a locally compact ANR with underlying anima $K=\type X$, and consider the composite functor
\begin{align}
  \begin{split}
    \label{eq:deltaR}
    &\Shv X\otimes\Sp^K\xrightarrow{\id\otimes\psi_X^*}\Shv X\otimes\Shv X\simeq\Shv{X\times X}  \\
    &\qquad\qquad\xrightarrow{\Delta^*}\Shv X\xrightarrow{X_*}\Sp.
  \end{split}
\end{align}
\begin{prop}[Bartels--Nikolaus]
  If $X$ is a compact ANR, then~\eqref{eq:deltaR} admits a left adjoint.
\end{prop}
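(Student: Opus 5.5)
The plan is to use the adjoint functor theorem. Source and target of~\eqref{eq:deltaR} are presentable, so it suffices to show that the composite preserves small limits and is accessible. Accessibility is the easy half: $\id\otimes\psi_X^*$ and $\Delta^*$ preserve colimits, and $X_*$ is accessible, being a right adjoint between presentable categories. So the real content is that the composite preserves limits.

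The first factor is harmless. Since $X$ is locally of singular shape, $\psi_X^*$ has a left adjoint $(\psi_X)_\sharp$, and both $(\psi_X)_\sharp$ and $\psi_X^*$ preserve colimits. An adjunction in $\Prlst$ is preserved by tensoring with $\Shv X$, so $\id\otimes(\psi_X)_\sharp\dashv\id\otimes\psi_X^*$. Hence $\id\otimes\psi_X^*$ is a right adjoint and preserves limits. Likewise $X_*$ preserves limits. The only problematic factor is $\Delta^*$: for the closed embedding $\Delta\colon X\hookrightarrow X\times X$ this does \emph{not} preserve limits in general. The point is that we only need this on the essential image of $\id\otimes\psi_X^*$, i.e.\ on sheaves on $X\times X$ that are ``locally constant in the second variable.''

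This is where the ANR hypothesis enters. Since $X$ is a compact ANR, there is an open neighbourhood $U\subseteq X\times X$ of the diagonal with two properties: it is fibred over the first projection $p_1\colon U\to X$, and it deformation retracts onto $\Delta(X)$ through a homotopy that preserves $p_1$ and moves only the second coordinate. This comes from a retraction $r$ of a neighbourhood of $X$ in some Euclidean or Hilbert space, using straight-line homotopies $(x,y)\mapsto (x,r((1-t)y+tx))$ for $y$ close to $x$.

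The key lemma I would then prove is: for every $G$ in the essential image of $\id\otimes\psi_X^*$, the natural map
\[
X_*(p_1)_*(G|_U)\;\to\;X_*\Delta^*G
\]
is an equivalence. I would first check this on generators $F\boxtimes\psi_X^*L$. On these it reduces to homotopy invariance of cohomology with local-system coefficients along the fibrewise deformation retraction of $U$ onto the diagonal, applied locally over $X$ in the first variable. Both sides are exact and colimit-preserving in $G$: the left side because $U\to X$ is, up to this homotopy, proper-like over compact $X$, or else one shrinks $U$ to a closed neighbourhood. So the equivalence extends to the whole image.

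Granting the lemma, the composite is identified with
\[
G\mapsto\Gamma\bigl(U,(\id\otimes\psi_X^*)(G)|_U\bigr),
\]
which is a composite of limit-preserving functors, since restriction to an open subset and global sections are both right adjoints. This gives the required limit preservation.

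I expect the main obstacle to be the key lemma. Two things need care: producing the neighbourhood $U$ with a deformation retraction that is genuinely fibred over $p_1$, and then verifying the comparison map at the level of spectral sheaves rather than just stalkwise. If the direct argument is awkward, my fallback is to pass to a closed neighbourhood $\bar U$ and use proper base change along $\bar U\to X$, so that the comparison can be checked on stalks over points $x\in X$. There it reduces to the contractibility of small neighbourhoods of $x$ in the ANR $X$, with coefficients in a local system.
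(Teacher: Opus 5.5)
The paper imports this proposition from Bartels--Nikolaus without proof, so your argument is judged on its own. Your reduction is sound: by the adjoint functor theorem it suffices that the composite preserve limits. Both $\id\otimes\psi_X^*$ and $X_*$ preserve limits, and the only issue is $\Delta^*$ on the image of $\id\otimes\psi_X^*$.

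The gap is in the key lemma, at the step where you extend from generators $F\boxtimes\psi_X^*L$ to the whole image. That step needs $G\mapsto\Gamma(U,G|_U)$ to preserve colimits on the image, and neither of your justifications gives this.
\begin{itemize}
\item Since $U$ is open and not compact, $\Gamma(U,-)$ does not commute with filtered colimits.
\item ``Proper-like up to homotopy'' is not an independent input. Because $G\mapsto X_*\Delta^*G$ already preserves colimits, colimit-preservation of the left side on the image is essentially equivalent to the lemma you are trying to prove.
\item Shrinking to a closed neighbourhood $\bar U$ does make the left side colimit-preserving. But then $G\mapsto\Gamma(\bar U,G|_{\bar U})$ is no longer visibly limit-preserving: restriction to a closed subset fails to commute with products, which is exactly the defect of $\Delta^*$ you set out to avoid. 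So your final step collapses.
\end{itemize}
Separately, the homotopy $(x,r((1-t)y+tx))$ need not keep $U$ inside $U$. So the fibrewise deformation retraction used in your generator check is not established either.

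The fix is to compare functors rather than objects, which removes any extension-from-generators step. Cover $X$ by opens $j_V\colon V\hookrightarrow X$ that are nullhomotopic in $X$ (ANRs are locally contractible), and pick $x\in V$. Then $j_V^*\psi_X^*\simeq V^*\circ x^*$ as colimit-preserving functors $\Sp^K\to\Shv{V}$, where $x^*\colon\Sp^K\to\Sp$ is evaluation at $x$. Under $\Shv{X}\otimes\Shv{X}\simeq\Shv{X\times X}$, the functor $\Delta^*$ is the tensor product of sheaves, and $j_V^*$ is symmetric monoidal. Hence
\[
  j_V^*\Delta^*(\id\otimes\psi_X^*)\simeq j_V^*\otimes x^*\colon\Shv{X}\otimes\Sp^K\to\Shv{V}\otimes\Sp\simeq\Shv{V}.
\]
This is right adjoint to $(j_V)_!\otimes x_\sharp$, so it preserves limits. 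The $j_V^*$ preserve limits and are jointly conservative, so $\Delta^*(\id\otimes\psi_X^*)$ preserves limits, and hence so does its composite with $X_*$. No neighbourhood of the diagonal is needed.

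If you prefer to keep $U$, all you need is a homotopy $p_1|_U\simeq p_2|_U$, which is allowed to leave $U$. It gives a natural equivalence $p_1^*\psi_X^*\simeq p_2^*\psi_X^*$ of functors $\Sp^K\to\Shv{U}$, hence $G|_U\simeq p_1^*\Delta^*G$ naturally on the whole image. Then the composite $\Gamma(X,\Delta^*G)\to\Gamma(U,G|_U)\to\Gamma(X,\Delta^*G)$ (pull back along $p_1$, then restrict to the diagonal) is a natural equivalence. So $X_*\Delta^*(\id\otimes\psi_X^*)$ is a retract of the limit-preserving functor $\Gamma(U,(\id\otimes\psi_X^*)(-)|_U)$, and therefore preserves limits.
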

We let $\mdef{\delta}\colon\Sp\to\Shv X\otimes\Sp^K$ denote the left adjoint of~\eqref{eq:deltaR}.
\begin{defn}[Bartels--Nikolaus]
  Let $X$ be a compact ANR with underlying anima $K=\type X$, and let $\delta$ be defined as above. The \tdef{Bartels--Nikolaus assembly functor}, denoted  $\mdef{\BNass}$, is the composite
  \begin{align}
  \begin{split}
    \label{eq:ass-defn}
    &\ccoShv X\simeq\ccoShv X\otimes\Sp\xrightarrow{\id\otimes\delta}\ccoShv X\otimes\Shv X\otimes\Sp^K\\
    &\qquad\qquad\xrightarrow{\mathrm{ev}\otimes\id}\Sp^K.
  \end{split}
\end{align}
\end{defn}
It is straightforward to verify
\begin{prop}
  The Bartels--Nikolaus assembly functor is natural in proper maps $f\colon Y\to X$, where $f\colon Y\to X$ acts as $f_?$ on $\ccoShv Y$ and $f_\sharp$ on $\Sp^{\type Y}$.
\end{prop}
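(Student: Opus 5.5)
We have to show that the square
\[
  \ccoShv Y\xrightarrow{\BNass}\Sp^{\type Y}\xrightarrow{f_\sharp}\Sp^{\type X}
  \qquad\text{and}\qquad
  \ccoShv Y\xrightarrow{f_?}\ccoShv X\xrightarrow{\BNass}\Sp^{\type X}
\]
commutes, naturally in $f$. The plan is to unwind both composites through the defining formula \eqref{eq:ass-defn}. Each composite becomes ``evaluation against a kernel'': for $\mathcal F\in\ccoShv Y=\Homd(\Shv Y,\Sp)$ we obtain $(\mathrm{ev}\otimes\id)(\mathcal F\otimes\kappa)$ for some kernel object $\kappa$. The comparison then reduces to an identity between kernels.

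Recall that $f_?$ is the functional dual of the strongly continuous functor $f^*\colon\Shv X\to\Shv Y$. So $\mathrm{ev}_X\circ(f_?\otimes\id)\simeq\mathrm{ev}_Y\circ(\id\otimes f^*)$ as functors $\ccoShv Y\otimes\Shv X\to\Sp$. This is the defining adjunction property of $\Homd$, and it is where we use that $f^*$ lies in $\Prdual$ because $f$ is proper. With this, the two composites become:
\begin{itemize}
\item $\BNass\circ f_?$, given by $\mathcal F\mapsto(\mathrm{ev}_Y\otimes\id)\bigl(\mathcal F\otimes(f^*\otimes\id)\delta_X\bigr)$;
\item $f_\sharp\circ\BNass$, given by $\mathcal F\mapsto(\mathrm{ev}_Y\otimes\id)\bigl(\mathcal F\otimes(\id\otimes f_\sharp)\delta_Y\bigr)$.
\end{itemize}
Hence it suffices to produce a natural equivalence
\[
  (f^*\otimes\id)\,\delta_X\simeq(\id\otimes f_\sharp)\,\delta_Y
\]
of functors $\Sp\to\Shv Y\otimes\Sp^{\type X}$.

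Both sides are left adjoints, so I would compare their right adjoints instead. The right adjoint of the left-hand side is \eqref{eq:deltaR} for $X$ precomposed with $f_*\otimes\id$:
\[
  \Shv Y\otimes\Sp^{\type X}\to\Sp,\qquad A\boxtimes L\mapsto X_*\bigl(f_*A\otimes\psi_X^*L\bigr).
\]
The right adjoint of the right-hand side is \eqref{eq:deltaR} for $Y$ precomposed with $\id\otimes f^*$:
\[
  A\boxtimes L\mapsto Y_*\bigl(A\otimes\psi_Y^*f^*L\bigr).
\]
To identify these two functors I would use three facts:
\begin{itemize}
\item $\psi_Y^*f^*\simeq f^*\psi_X^*$, i.e.\ pulling back a local system along $f$ is compatible with the inclusion of local systems into sheaves;
\item the projection formula $f_*A\otimes B\simeq f_*(A\otimes f^*B)$, valid because $f_*\simeq f_!$ for proper $f$;
\item $X_*f_*\simeq Y_*$.
\end{itemize}
These functors preserve colimits in each variable, since $X$ and $Y$ are compact so that $X_*$ and $Y_*$ preserve colimits, and $\psi_X^*$ preserves colimits. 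The identification on pure tensors therefore extends to the whole tensor product $\Shv Y\otimes\Sp^{\type X}$. Passing to left adjoints gives the required equivalence of kernels.

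The main obstacle is bookkeeping rather than ideas. One must check that the equivalence of kernels, and the identification $\mathrm{ev}_X\circ(f_?\otimes\id)\simeq\mathrm{ev}_Y\circ(\id\otimes f^*)$, are coherently natural, so that one gets a natural transformation of functors on the category of compact ANRs and proper maps, and not merely objectwise equivalences. I would handle this by building everything functorially from the six-functor formalism: the projection formula and the base change $\psi^*f^*\simeq f^*\psi^*$ are part of the coherent structure of the formalism. The mate construction then turns the coherent right-adjoint identification into a coherent left-adjoint one.
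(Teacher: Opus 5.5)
Your argument is correct. The paper gives no proof here (it only says the statement is straightforward to verify), and your reduction to an equivalence of copairings $(f^*\otimes\id)\delta_X\simeq(\id\otimes f_\sharp)\delta_Y$, checked on right adjoints, is the general form of the computation~\eqref{eq:ident_of_pushforward_wall} that the paper carries out for $\WallObject Y$ in the proof of \cref{lem:induced-map-on-parametrized-spectra}; there, proper base change along the graph $(\id,f)\colon Y\to Y\times X$ plays the role of your projection formula, and coherence of the naturality is treated with the same informality as in your sketch.
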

The Bartels--Nikolaus assembly functor is so-named because it categorifies Waldhausen's assembly map \cite{Waldhausen1978}. Recall that the latter is defined for a given anima $K$ as the canonical comparison map
\[
  \A(\pt )\otimes K\simeq \colim_K\A (\pt )\to \A (\colim_K\pt )\simeq\A (K),
\]
where $\mdef{\A (-)}$ is the functor that sends an anima $K$ to the K-theory of its spherical group ring, that is $\K (\SS\lbrack\desusp K\rbrack)$. We then have
\begin{thm}[Bartels--Efimov--Nikolaus]
  For $X$ a compact ANR, the map
  \[
    \K (\BNass )\colon\K (\ccoShv X)\to\K (\Sp^{\type X})
  \]
  is naturally identified with the assembly map for $\type X$.
\end{thm}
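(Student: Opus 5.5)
We identify both sides as natural transformations of functors $\canr\to\Sp$ (or at least on the homotopy category of compact ANRs) and then compare them pointwise. The source $\K(\BNass)$ is a map of functors because $\BNass$ is natural in proper maps, which for compact spaces means all maps. This naturality is the proposition just stated, and continuous K-theory is functorial in strongly continuous functors. The target, Waldhausen's assembly map $\A(\pt)\otimes\type X\to\A(\type X)$, is the universal map from a homotopy-invariant, colimit-preserving approximation of $\A$. Our plan has three steps:
\begin{enumerate}
\item Identify $\K(\Sp^{\type X})\simeq\A(\type X)$.
\item Show that $X\mapsto\K(\ccoShv X)$ is a homotopy-invariant, excisive functor whose value on a point is $\A(\pt)$, so that it is naturally equivalent to $\A(\pt)\otimes\type X$.
\item Conclude by the universal property of assembly: a natural map from an excisive, homotopy-invariant functor to $\A$ that is an equivalence on the point must be the assembly map.
\end{enumerate}

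\textbf{Step 1.} Continuous K-theory of the compactly generated category $\Sp^{\type X}$ is the K-theory of its compact objects, namely perfect modules over $\SS[\desusp\type X]$ for connected $X$ (componentwise in general). This gives $\A(\type X)$, naturally in $f_\sharp$, which corresponds to induction of modules.

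\textbf{Step 2.} We invoke Efimov's computation of the K-theory of sheaf and cosheaf categories. Efimov proves $\K(\Shv X)\simeq\Gamma_c(X,\K(\Sp))$, and dually, for the functional dual $\ccoShv X$, the continuous K-theory is the locally finite (Borel--Moore-type) homology $C^{lf}_*(X;\K(\Sp))$. For compact $X$ this is ordinary homology, $\K(\SS)\otimes\type X=\A(\pt)\otimes\type X$, using that compact ANRs are locally contractible so that sheaf homology agrees with the homology of the underlying anima. Naturality in proper maps follows from the naturality of Efimov's identification.

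\textbf{Step 3.} This is the main obstacle: showing that the composite natural transformation $\A(\pt)\otimes\type X\to\A(\type X)$ is Waldhausen's. By the universal property of assembly (left Kan extension from the point, equivalently the Weiss--Williams characterisation), any natural transformation out of a colimit-preserving functor of $\type X$ is determined by its value at $X=\pt$. It therefore suffices to check two things:
\begin{itemize}
\item naturality holds as functors of anima, not just of spaces. To arrange this, restrict to compact ANRs modelling finite anima and use that homotopic maps induce equivalent functors, by homotopy invariance of both sides;
\item at $X=\pt$, $\BNass\colon\ccoShv\pt=\Sp\to\Sp$ is the identity. This is immediate: $\delta\colon\Sp\to\Sp$ is left adjoint to the identity, hence is the identity.
\end{itemize}
The delicate part is making the identification in Step 2 natural at the level of functors to $\Sp$ rather than objectwise, so that the universal property applies. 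I would first attempt this by writing $\K(\ccoShv X)$ as a colimit over a cover, using Efimov's localization sequences. I would also try to show directly that $X\mapsto\K(\ccoShv X)$ sends closed covers by ANRs to pushouts, anticipating \cref{intro codescent}.
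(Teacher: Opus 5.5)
Your route is sound in outline but differs from the one the paper indicates. The paper gives no full proof of this theorem. It takes Efimov's computation $\K(\ccoShv X)\simeq\A(\pt)\otimes\type X$ \cite[Thm~11.1]{Efimov2025c} as the hard input, as in your Step~2. It then says the comparison with assembly follows from the case of a simplex, by a codescent argument modelled on the proof of \cref{thm: epimorphism}: choose a decent cover, use that on each contractible piece $X_J$ the functor $\BNass$ is the split epimorphism induced by $X_J^*$, and reassemble using \cref{thm: codescent} on the cosheaf side and the nerve theorem on the anima side. You instead pass to anima and use that a natural transformation out of $\A(\pt)\otimes(-)$ is determined by its component at the point, where $\BNass$ is literally the identity. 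This buys uniformity, since no decent cover is needed and every compact ANR is handled at once, and only the one-point space has to be checked. The paper's version works directly with diagrams of spaces indexed by the cover and so never needs to leave $\canr$, at the price of requiring a decent cover.

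The step you should rephrase is the passage to anima. Homotopic maps do \emph{not} induce equivalent functors, because $f_?$ depends on $f$ itself. For $f\colon\pt\to X$, $f_?\SS$ is the skyscraper at $f(\pt)$, and distinct points give inequivalent objects; compare the paper's remark that $\Tors f$ is not homotopy invariant. Homotopy invariance appears only after applying $\K$. Even then, the statement that homotopic maps induce homotopic maps lives in the homotopy category, which does not suffice to invoke a universal property of functors between $\infty$-categories.

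The correct mechanism is as follows. Once Efimov's identification is natural in all maps of $\canr$, both $\K(\ccoShv{-})$ and $\A(\type{-})$ invert homotopy equivalences. So they, together with the transformation $\K(\BNass)$, factor uniquely through the $\infty$-categorical localization of $\canr$ at homotopy equivalences, because restriction along a localization is fully faithful. Using West's theorem, $\type$ identifies this localization with the $\infty$-category of finite anima. This is not cosmetic: in $\canr$ itself the comma category $\pt\downarrow X$ is the discrete set of points of $X$, so the Kan-extension property you invoke fails before localizing.

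Similarly, if you try to get excision directly from \cref{thm: codescent}, note that $\K$ does not send arbitrary pushouts in $\Prdual$ to pushouts of spectra. You would need the codescent squares to be Verdier squares, and that is where Efimov's localization sequences would have to enter.
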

Note that $\K (\Sp^{\type X})\simeq \A (\type X)$ by definition. On the other hand, the most difficult part of the preceding theorem is the identification of the target $\K (\ccoShv X)$ with $\A (\pt )\otimes\type X$, which is a special case of~\cite[Thm~11.1]{Efimov2025c}. In fact, once one has this calculation, it is not too hard to deduce the theorem from the case where $X$ is a simplex via a codescent argument like the one we use to prove~\cref{intro epimorphism}.

\medskip

Aside from the assembly functor, we need one more construction from Bartels and Nikolaus's work:
\begin{defn}[Bartels--Nikolaus]
  Let $X$ be a locally compact ANR. The \tdef{local Wall object} of $X$, denoted $\mdef{\WallObject X}$, is the compact object of $\ccoShv X$ associated with the strongly continuous functor $X_\sharp\colon\Shv X\to\Sp$.
\end{defn}
That is, $\WallObject X$ is associated with the cosheaf $U\mapsto\SS\lbrack \type U\rbrack$.
\begin{thm}[Bartels--Nikolaus]
  For $X$ a compact ANR, the assembly functor $\BNass$ takes $\WallObject X$ to the constant sphere spectrum $\SS_{\type X}\in\Sp^{\type X}$.
\end{thm}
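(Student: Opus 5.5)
The plan is to unwind the definition of $\BNass$ on the specific object $\WallObject X$ and then compute by passing to right adjoints, where everything becomes the formula ``global sections of a local system''. Write $K=\type X$.

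\emph{Step 1: reduce to a composite of functors.} Since $\WallObject X$ is compact, it corresponds to the strongly continuous functor $X_\sharp\colon\Shv X\to\Sp$. I claim that the composite
\[
\Shv X\otimes\Sp^K\xrightarrow{\WallObject X\otimes -}\ccoShv X\otimes\Shv X\otimes\Sp^K\xrightarrow{\mathrm{ev}\otimes\id}\Sp^K
\]
is naturally equivalent to $X_\sharp\otimes\id_{\Sp^K}$. This follows from the defining property of $\ccoShv X=\Homd(\Shv X,\Sp)$. For a strongly continuous $F\colon\Shv X\to\Sp$ with associated compact object $c_F$, the functor $\mathrm{ev}(c_F\otimes -)\colon\Shv X\to\Sp$ is $F$ itself. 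Tensoring this identification with $\Sp^K$ gives the claim. Hence
\[
\BNass(\WallObject X)\simeq (X_\sharp\otimes\id)(\delta(\SS)).
\]

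\emph{Step 2: compute the right adjoint.} The functor $(X_\sharp\otimes\id)\circ\delta\colon\Sp\to\Sp^K$ is a left adjoint, so it suffices to identify its right adjoint with $K_*\colon\Sp^K\to\Sp$. Since $X$ is an ANR, hence locally contractible, $X_\sharp$ exists and $X_\sharp\dashv X^*$, where $X^*$ preserves colimits. Therefore $X_\sharp\otimes\id\dashv X^*\otimes\id$ in $\Prlst$. The right adjoint of $(X_\sharp\otimes\id)\circ\delta$ is then the composite of $X^*\otimes\id$ with~\eqref{eq:deltaR}:
\[
G\;\longmapsto\; X_*\Delta^*(\id\otimes\psi_X^*)(X^*\SS\boxtimes G),\qquad G\in\Sp^K.
\]
Under $\Shv X\otimes\Shv X\simeq\Shv{X\times X}$, the object $(\id\otimes\psi_X^*)(X^*\SS\boxtimes G)$ is $\mathrm{pr}_2^*\psi_X^*G$. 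Applying $\Delta^*$ gives $\psi_X^*G$, so the right adjoint is $G\mapsto X_*\psi_X^*G$.

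\emph{Step 3: conclude.} For $X$ locally of singular shape, global sections of a local system are computed by the anima, i.e.\ $X_*\psi_X^*\simeq K_*=\lim_K$. So the right adjoint is $K_*$, whose left adjoint is $K^*\colon\Sp\to\Sp^K$. Evaluating at $\SS$ gives $\BNass(\WallObject X)\simeq K^*\SS=\SS_{\type X}$.

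The main obstacle is Step 1. One must make precise that evaluation against the compact object associated to $F$ recovers $F\otimes\id$ after tensoring with $\Sp^K$. This requires some care with $\Homd$ versus $\Fun^L$, but should follow formally from the closed symmetric monoidal structure on $\Prdual$.

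A secondary point, needed in Step 2, is that the identification $\Shv X\otimes\Shv X\simeq\Shv{X\times X}$ carries $X^*\SS\boxtimes\psi_X^*G$ to $\mathrm{pr}_2^*\psi_X^*G$, compatibly with $\Delta^*$. This is standard for locally compact Hausdorff spaces.
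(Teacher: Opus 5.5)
Your proposal is correct and takes essentially the same route as the paper, which carries out this computation inside the proof of \cref{lem:induced-map-on-parametrized-spectra}. There, $\BNass(\WallObject X)$ is first identified with $(X_\sharp\otimes\id)\circ\delta$ by the same interchange (string-diagram) argument you give in Step 1; its right adjoint $X_*\Delta_X^*(\id\otimes\psi_X^*)(X^*\otimes\id)$ is then reduced to $K_*$ using functoriality and full faithfulness of $\psi_X^*$, so that $\BNass(\WallObject X)\simeq K^*\SS=\SS_{\type X}$. The only difference is a cosmetic reordering: you cancel $\Delta_X^*\mathrm{pr}_2^*$ at the level of sheaves before invoking $X_*\psi_X^*\simeq K_*$, whereas the paper first moves $\psi_X^*$ past the diagonal to $\Delta_K^*$ on anima and cancels there.
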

It is a consequence of this theorem that the K-theory class $\lbrack\WallObject X\rbrack\in\A (\pt )\otimes\type X$ captures the simple homotopy type of $X$, see~\cite{Nikolaus2024}. In particular, it follows that every compact ANR has a well-defined simple homotopy type, which recovers and extends a celebrated theorem of Chapman. We will not have need of this here, and mention it only to advertise the work of Bartels and Nikolaus.  
\subsection{The Whitehead category and the fiber of assembly}
The \tdef{Whitehead spectrum} of an anima $K$, denoted $\mdef{\WhiteheadSpt{K}}$, is defined to be the cofiber of its assembly map; that is, there is a fiber sequence
\begin{equation}
  \label{eq:waldhausen-fiber-seq}
    \A (\pt )\otimes K\to \A (K)\to \WhiteheadSpt K.
\end{equation}
It is well-known that $\pi_1(\WhiteheadSpt K)$ is naturally isomorphic to the Whitehead group $\WhiteheadSpt{K}[1]$.
Often it is more natural to consider the {\em fiber} of the assembly map, which is then the desuspension of the Whitehead spectrum $\desusp\WhiteheadSpt K$.
Building on Bartels and Nikolaus's work, we categorify $\desusp\WhiteheadSpt K$ by simply attempting to lift the fiber sequence~\eqref{eq:waldhausen-fiber-seq} (or rather its rotated form with fiber $\desusp \WhiteheadSpt{K}[1]$) from spectra to dualizable categories:
\begin{defn}
  If $X$ is a compact ANR, we define the \tdef{Whitehead category} of $X$, denoted $\mdef{\WhiteheadCat X}$, by the formula
  \[
    \WhiteheadCat X = \Ker (\BNass\colon\ccoShv {X}\to\Sp^{\type X}),
  \]
  where $\BNass$ is the Bartels--Nikolaus assembly functor.
\end{defn}
\begin{rmk}
  Since $\BNass$ is natural in continuous maps between compact ANRs, the assignment $X\mapsto\WhiteheadCat X$ is functorial in such maps. Furthermore, by construction we get a nullhomotopy of the composition
  \[
    \K (\WhiteheadCat X)\to \K (\ccoShv X)\to \K (\Sp^{\type X}),
  \]
  whence we get a natural transformation from $\K (\WhiteheadCat X)$ to the fiber of $\K (\ccoShv X)\to \K (\Sp^{\type X})$. The latter is canonically identified with the fiber of Waldhausen's assembly map, namely $\desusp{\WhiteheadSpt X}$. We thus get a canonical map
  \begin{equation}
    \label{eq:k-theory-vs-desuspension-of-whitehead-spectrum}
    \K (\WhiteheadCat X)\to \desusp \WhiteheadSpt{\type X},
  \end{equation}
  natural in $X$.
\end{rmk}
In this section, we will show that the map~\eqref{eq:k-theory-vs-desuspension-of-whitehead-spectrum} is an equivalence for a class of nice compact spaces, namely those admitting a decent cover in the sense of~\cref{subsec: decent covers}. By~\cref{prop:triangulation gives decent cover}, this class of spaces includes all spaces that admit a triangulation, thereby proving part (i) of~\cref{intro whitehead category}.

We can deduce the desired statement about~\eqref{eq:k-theory-vs-desuspension-of-whitehead-spectrum} from~\cref{intro epimorphism}, which we restate here for the reader's convenience:
\begin{thm}\label{thm: epimorphism}
  If $X$ is a compact ANR that admits a decent cover, then the Bartels--Nikolaus assembly functor $\BNass\colon\ccoShv {X}\to\Sp^{\type X}$ has a fully faithful right adjoint.
\end{thm}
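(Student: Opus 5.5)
The strategy is codescent, following the outline in the introduction. A functor $F\colon\mathcal C\to\mathcal D$ in $\Prdual$ has a fully faithful right adjoint exactly when it is a localization, i.e.\ an epimorphism-type map. This property is stable under colimits in $\Prdual$: a pushout of localizations, computed in $\Prlst$ as a limit of right adjoints, is again a localization. So I will present $\BNass$ for $X$ as a colimit of assembly functors for pieces of a decent cover, and reduce to the case of contractible pieces.

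\textbf{Step 1 (contractible case).} Suppose $X$ is a compact contractible ANR, for example a simplex or a contractible piece of the cover. Then $\Sp^{\type X}\simeq\Sp$, and by the theorem of Bartels--Nikolaus $\BNass(\WallObject X)\simeq\SS$. The functor $\Sp\to\ccoShv X$ sending $\SS\mapsto\WallObject X$ is strongly continuous, since $\WallObject X$ is compact, and it gives a section $s$ of $\BNass$. I then have to show that the right adjoint $R$ of $\BNass$ satisfies $\BNass R\simeq\id$. Since everything is generated by $\SS$, it suffices to compute $R(\SS)$ and check that the counit $\BNass R(\SS)\to\SS$ is an equivalence. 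Unwinding~\eqref{eq:ass-defn}, $R$ is the functor obtained from $\delta$ and the duality pairing. Using $\type X\simeq\pt$, I expect $R(\SS)$ to be the object corepresenting $\Shv X\ni F\mapsto X_*F=\Gamma(X,F)$, and I expect the counit to be the identity $X_*\SS_X\simeq\SS$, using contractibility and the fact that $X$ is locally of singular shape. Making this precise (identifying $\delta$ for contractible $X$ and the resulting counit) is a concrete but delicate computation.

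\textbf{Step 2 (codescent).} Next I will use \cref{thm: codescent}: for closed ANR pieces $Y_0,Y_1$ with ANR intersection, $\ccoShv{-}$ sends the square to a pushout in $\Prdual$. On the other side, $\type X\simeq\type{Y_0}\sqcup_{\type{Y_{01}}}\type{Y_1}$ by van Kampen/excision for closed ANR covers. Since $K\mapsto\Sp^K$ with $f_\sharp$ takes colimits of anima to colimits in $\Prdual$, the target square is also a pushout. Naturality of $\BNass$ in proper maps, where inclusions of closed subsets are proper, gives a map of pushout squares, so $\BNass_X$ is the pushout of $\BNass_{Y_0},\BNass_{Y_1},\BNass_{Y_{01}}$.

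\textbf{Step 3 (colimits of localizations).} I will prove that in $\Prdual$, a colimit of a diagram of localizations is a localization. A strongly continuous $F$ with fully faithful right adjoint is the same as a Verdier quotient $\mathcal C\to\mathcal C/\Ker F$. Colimits of Verdier-quotient maps, viewed as maps in $\mathrm{Fun}(\Delta^1,\Prdual)$, are Verdier quotients by the colimit of the kernels, because quotients commute with colimits. Then I induct over the decent cover, which I take to be a finite closed cover all of whose finite intersections are contractible compact ANRs; iterating binary codescent builds $X$ from the pieces.

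I expect the main obstacle to be Step 1: computing the right adjoint of $\BNass$ explicitly enough to see that the counit is an equivalence. The induction in Step 2 also needs some care, since the pieces being glued at intermediate stages must themselves have decent covers.
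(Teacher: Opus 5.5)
\textbf{The gap is in Step 1.} Your overall architecture is the paper's, and Steps 2--3 are sound. It consists of a contractible base case, binary codescent from \cref{thm: codescent} on the source, the pushout of underlying anima on the target, and the fact that in $\Prlst$ the right adjoint of a colimit of functors is the limit of their right adjoints, so fully faithful right adjoints survive colimits. Step 1 is the only non-formal input besides codescent, and your proposed computation targets the wrong object.

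The missing observation is this. For contractible $X$ one has $\psi_X^*\simeq X^*$, so $\delta$ is left adjoint to $X_*\Delta_X^*(\id\otimes X^*)\simeq X_*$. Hence $\delta\simeq X^*$ and $\BNass\simeq\Homd (X^*,\Sp )$, which on compact objects is $\widetilde A\mapsto\widetilde A\circ X^*$. Since $X_\sharp\dashv X^*\dashv X_*\dashv X^!$, the adjunction $X_\sharp\dashv X^*$ lives in $\Prdual$. The functor $\Homd (-,\Sp )$ is contravariant on functors but covariant on transformations, so it turns this into $\Homd (X^*,\Sp )\dashv\Homd (X_\sharp ,\Sp )$. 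The counit is induced by $X_\sharp X^*\to\id$, which is invertible because $X^*$ is fully faithful.

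Hence $R\simeq\Homd (X_\sharp ,\Sp )$ and $R(\SS )\simeq\WallObject X$. Your section $s$ \emph{is} the right adjoint, and the counit at $\SS$ is $X_\sharp X^*\SS\xrightarrow{\sim}\SS$. This is consistent with the compact case: for compact $A$ one has $\mathrm{Nat}(\widetilde A,X_\sharp )\simeq\mathrm{Nat}(\widetilde A X^*,\id_{\Sp})\simeq\Map (\BNass A,\SS )$.

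The object you predicted, attached to $F\mapsto X_*F$, is instead the value at $\SS$ of the \emph{left} adjoint of $\BNass$ (when $X_*$ is strongly continuous). Indeed, $X^*\dashv X_*$ gives $\mathrm{Nat}(X_*,\widetilde A)\simeq\Map (\SS ,\BNass A)$. The map you call the counit, $\SS\to X_*X^*\SS$, is really the unit of $X^*\dashv X_*$. Its invertibility would only show that this left adjoint is fully faithful.

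A strongly continuous section alone is also not enough. For example, $\Sp\times\Sp\to\Sp$, $(a,b)\mapsto a\oplus b$, is split by $a\mapsto (a,0)$ in $\Prdual$, but its right adjoint, the diagonal, is not fully faithful. What makes the contractible case work is that the splitting is an adjunction counit.

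\textbf{On your worry about the induction.} First pass to a finite subcover using compactness and condition (a). Then note that condition (a) is neither inherited nor needed. For $X=[0,3]$ with $X_1=[0,1]$, $X_2=[1,3]$, $X_3=[1/2,3/2]$ the cover is decent, but $\lbrace X_1,X_2\rbrace$ is not a decent cover of $X_1\cup X_2$. Binary codescent and the pushout of anima only require closed ANR pieces with ANR intersection.

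So induct instead on finite closed covers whose finite intersections are compact ANRs that are contractible or empty, using $\ccoShv{\varnothing}=0=\Sp^{\varnothing}$. Borsuk's union theorem makes $X_1\cup\dots\cup X_{n-1}$ and $\bigcup_{i<n}(X_i\cap X_n)$ compact ANRs, each carrying such a cover with fewer pieces. The paper organizes the same gluing as a single colimit over the poset of nonempty finite intersections, using the nerve theorem of Dugger--Isaksen on the anima side.
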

Indeed, if the Bartels--Nikolaus assembly functor has a fully faithful right adjoint (or in other words, is a reflective localization), then
\[
  \WhiteheadCat X\to\ccoShv X\to\Sp^{\type X}
\]
is a Verdier sequence, and it follows from the fact that K-theory is a localizing invariant that
\begin{cor}
  If $X$ is a compact ANR such that its Bartels--Nikolaus assembly functor satisfies the conclusion of~\cref{thm: epimorphism}, then~\eqref{eq:k-theory-vs-desuspension-of-whitehead-spectrum} is an equivalence.
\end{cor}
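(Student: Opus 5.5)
We prove the corollary directly from the localization property of K-theory. The only input beyond formal considerations is that the fiber sequence obtained agrees with the one defining the map~\eqref{eq:k-theory-vs-desuspension-of-whitehead-spectrum}.

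\emph{The Verdier sequence in $\Prdual$.} Write $R$ for the right adjoint of $\BNass$, which is fully faithful by hypothesis. Then $\BNass$ is a Bousfield localization and $\WhiteheadCat X=\Ker(\BNass)$ is the corresponding kernel, so
\[
  \WhiteheadCat X\to\ccoShv X\xrightarrow{\BNass}\Sp^{\type X}
\]
is a short exact sequence of presentable stable categories. We must also check that this is a short exact sequence in $\Prdual$, so that continuous K-theory applies.
\begin{itemize}
\item $\BNass$ is strongly continuous. It is built from $\id\otimes\delta$ and $\mathrm{ev}\otimes\id$, which are morphisms in $\Prdual$.
\item $\WhiteheadCat X$ is dualizable. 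It is the kernel of a strongly continuous localization between dualizable categories, and such kernels are dualizable.
\item The inclusion $\WhiteheadCat X\hookrightarrow\ccoShv X$ is strongly continuous. Its right adjoint is $\fib(\id\to R\BNass)$, which preserves colimits because $R$ does: $R$ itself has a right adjoint since $\BNass$ is strongly continuous.
\end{itemize}
These are the standard facts recorded in~\cite{Efimov2025} and~\cite{Krause_Nikolaus_Putzstuck2024}. They show that the sequence is a fiber–cofiber sequence in $\Prdual$.

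\emph{Applying continuous K-theory.} Efimov's continuous K-theory is a localizing invariant on $\Prdual$, so it gives a fiber sequence
\[
  \K(\WhiteheadCat X)\to\K(\ccoShv X)\to\K(\Sp^{\type X}).
\]
The nullhomotopy of the composite here is the one induced by the identification $\BNass\circ\iota\simeq 0$. This is exactly the nullhomotopy used in the preceding remark to construct~\eqref{eq:k-theory-vs-desuspension-of-whitehead-spectrum}. Hence the canonical map $\K(\WhiteheadCat X)\to\fib\big(\K(\BNass)\big)$ is an equivalence.

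\emph{Identifying the fiber.} By the Bartels--Efimov--Nikolaus theorem, $\K(\BNass)$ is identified with Waldhausen's assembly map. Its fiber is therefore $\desusp\WhiteheadSpt{\type X}$ by~\eqref{eq:waldhausen-fiber-seq}, and composing with this identification gives~\eqref{eq:k-theory-vs-desuspension-of-whitehead-spectrum}.

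\emph{Main obstacle.} The only step that is not purely formal is checking that the kernel sequence lives in $\Prdual$ with strongly continuous maps. I would cite the general statement that a strongly continuous functor between dualizable categories with fully faithful right adjoint is a quotient in $\Prdual$ with dualizable kernel. The remaining steps are bookkeeping of the nullhomotopies.
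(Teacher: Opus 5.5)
Your proposal is correct and follows the paper's argument. The fully faithful right adjoint makes $\WhiteheadCat X\to\ccoShv X\to\Sp^{\type X}$ a Verdier sequence; continuous K-theory, being a localizing invariant, sends it to a fiber sequence compatible with the nullhomotopy that defines the comparison map; and the Bartels--Efimov--Nikolaus theorem identifies that fiber with the desuspended Whitehead spectrum. Your check that the kernel is dualizable with strongly continuous inclusion, via the colimit-preserving right adjoint $\mathrm{fib}(\mathrm{id}\to R\circ\BNass)$, spells out what the paper leaves implicit.
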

\subsection{Decent covers}\label{subsec: decent covers}
Recall that an open cover $\lbrace U_i\rbrace_{i\in I}$ of a paracompact space $X$ is said to be a \tdef{good cover} if the intersection $\bigcap_{j\in J}U_j$ is either empty or contractible for every finite subset $J\subseteq I$. We will need a similar condition on closed covers:
\begin{defn}
  Let $X$ be a compact ANR, and let $\lbrace Y_i\rbrace_{i\in I}$ be a cover of $X$ by closed subsets. We will write $Y_J = \bigcap_{j\in J}Y_j$ for a subset $J\subseteq I$, so in particular $Y_\varnothing = X$. The cover $\lbrace Y_i\rbrace_{i\in I}$ is said to be a \tdef{decent cover} if the following conditions are satisfied:
  \begin{enumerate}[label=(\alph*)]
  \item The interiors $\lbrace Y_i^\circ\rbrace_{i\in I}$ form an open cover of $X$; and 
  \item For each finite subset $J\subseteq I$, the space $Y_J$ is itself an ANR and either contractible or empty.
  \end{enumerate}
\end{defn}
As with good covers, not every space admits a decent cover. The following proposition says that many of the spaces we care about do:
\begin{prop}\label{prop:triangulation gives decent cover}
  If $X$ is the geometric realization of a simplicial complex, then $X$ admits a decent cover.
\end{prop}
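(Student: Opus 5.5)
Starting from a simplicial complex $\Sigma$ with $|\Sigma| = X$, I will build the decent cover from closed stars in the barycentric subdivision. Let $\Sigma'$ be the first barycentric subdivision. For each vertex $v$ of $\Sigma$, set
\[
  Y_v = \overline{\mathrm{st}}_{\Sigma'}(v),
\]
the closed star of $v$ in $\Sigma'$. Equivalently, $Y_v$ is the union of the closed simplices of $\Sigma'$ whose vertices are barycenters $\hat\sigma$ of simplices $\sigma\ni v$. Alternatively one can take the closed star of $v$ in the second barycentric subdivision; I will use whichever makes the interior condition cleanest.

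The first step is to identify the intersections. For a finite set $J$ of vertices, $Y_J$ is nonempty if and only if the vertices of $J$ span a simplex $\tau$ of $\Sigma$. In that case $Y_J$ is the subcomplex of $\Sigma'$ spanned by the barycenters $\hat\sigma$ with $\sigma\supseteq\tau$. This holds because a simplex of $\Sigma'$ is a chain $\sigma_0<\dots<\sigma_k$, and it lies in $Y_v$ exactly when $v\in\sigma_0$. Hence $Y_J$ is the union of the chains whose minimal element contains $\tau$. This subcomplex is a cone with apex $\hat\tau$: any such chain can be extended by $\tau$ at its bottom, so $Y_J$ deformation retracts to $\hat\tau$ along straight lines. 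It is therefore contractible. Being a finite subcomplex (or a locally finite one in the infinite case), it is the realization of a simplicial complex, hence an ANR. Here I use the standard fact that polyhedra are ANRs; for infinite complexes one works with the metric topology or restricts to locally finite complexes. This verifies condition (b).

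The second step is condition (a), that the interiors $Y_v^\circ$ cover $X$. The open star $\mathrm{st}_{\Sigma}(v)$ of $v$ in $\Sigma$ is open in $X$, and these open stars cover $X$. However, $Y_v$ is smaller than the closed star of $v$ in $\Sigma$, so I should instead check directly that every point lies in some $Y_v^\circ$. Take $x$ in the open simplex of $\Sigma'$ corresponding to a chain $\sigma_0<\dots<\sigma_k$, and pick a vertex $v\in\sigma_0$. I claim $x\in Y_v^\circ$: every simplex of $\Sigma'$ containing this chain as a face is a longer chain whose minimum is a face of $\sigma_0$. But that minimum need not contain $v$, so the claim can fail.

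To fix this, I will use the dual description. Set
\[
  Y_v = \{x : \text{the barycentric coordinate } t_v(x) \ge t_w(x) \text{ for all } w\}.
\]
This is exactly the closed star of $v$ in $\Sigma'$ (the first-derived neighborhood). Its interior contains the set $\{t_v > t_w \text{ for all } w\neq v\}$. Points where several coordinates tie lie on boundaries, so these strict sets alone do not cover $X$. I therefore enlarge the sets slightly, setting
\[
  Y_v = \{x : t_v(x) \ge \tfrac12 \max_w t_w(x)\}.
\]
Then the interiors contain the open sets $\{t_v > \tfrac12\max_w t_w\}$, which cover $X$ because the maximizing $v$ satisfies the strict inequality.

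For a finite $J$, the intersection $Y_J$ is
\[
  Y_J = \{x : t_v(x) \ge \tfrac12\max_w t_w(x) \text{ for all } v\in J\}.
\]
This is nonempty precisely when $J$ spans a simplex $\tau$. It is contractible, since the straight-line homotopy toward the barycenter $\hat\tau$ within each simplex containing $\tau$ preserves the defining inequalities. Concretely, increasing the weight on $J$ uniformly is a convex operation inside each simplex, so these inequalities are preserved. Each $Y_J$ is a compact polyhedron, being cut out by linear inequalities on each simplex, and hence an ANR.

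The main obstacle is exactly the tension in condition (a): the interiors must cover $X$ while the intersections stay contractible. Checking that the straight-line contraction preserves the inequalities across different simplices, compatibly on shared faces, is the step I expect to require the most care.
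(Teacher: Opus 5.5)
Your final construction, $Y_v=\{x : t_v(x)\ge\frac{1}{2}\max_w t_w(x)\}$, is correct, and it takes a genuinely different route from the paper.

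The paper uses the closed stars $|\mathrm{Star}_\Sigma(\sigma)|$ in $\Sigma$ itself. It asserts that a finite intersection of them is empty unless $\sigma_1\cup\dots\cup\sigma_n$ is a simplex, in which case it is the closed star of that simplex. That is the intersection rule for \emph{open} stars, and it fails for closed stars:
\begin{itemize}
\item In the boundary of a $2$-simplex $[a,b,c]$ one has $|\mathrm{Star}(a)|\cap|\mathrm{Star}(b)|=|[a,b]|\sqcup\{c\}$. This is disconnected, although $\{a,b\}$ is a simplex.
\item In the complex with edges $[a,b]$ and $[b,c]$, the closed stars of $a$ and $c$ meet in $\{b\}$, although $\{a,c\}$ is not a simplex.
\end{itemize}
So the paper's cover satisfies (a) but not (b). Your first attempt (closed stars of the original vertices in $\Sigma'$, i.e.\ the sets $\{t_v=\max_w t_w\}$) satisfies (b) but not (a), as you correctly diagnosed. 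The factor $\frac{1}{2}$ reconciles the two, replacing the combinatorics of stars by convexity inside each simplex. The only price is that your $Y_J$ are not subcomplexes of $\Sigma$. The ANR property therefore has to come from the standard fact that a finite union of convex linear cells in the simplices of $\Sigma$ is a compact polyhedron.

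When you write this up, sharpen two points.
\begin{itemize}
\item For $x\in Y_J$ and $v\in J$ you have $t_v(x)\ge\frac{1}{2}\max_w t_w(x)>0$, so the carrier of $x$ contains $\tau$. This is what makes the homotopy $t(H(x,s))=(1-s)\,t(x)+s\,t(\hat\tau)$ well defined on $Y_J$. Since it is a single formula in barycentric coordinates, compatibility on shared faces (the step you flagged) is automatic.
\item The path stays in $Y_J$ not because the weights on $J$ increase (they can decrease toward $1/|\tau|$), but by convexity. On a closed simplex $\bar\sigma\supseteq\tau$, the set $Y_J\cap\bar\sigma$ is cut out by the linear inequalities $t_v\ge\frac{1}{2}t_w$ for $v\in J$, $w\in\sigma$. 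So it is a convex polytope containing $\hat\tau$.
\end{itemize}

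Two things can be dropped. Decent covers are only defined for compact $X$, so $\Sigma$ is finite and the remarks on infinite complexes are unnecessary. The suggested alternative of closed stars of the original vertices in the second subdivision does not work: those sets do not even cover $X$, since they miss the barycenters of edges.
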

\begin{proof}
  Let $|\Sigma |\xrightarrow\sim X$ be a triangulation of $X$. The collection of closed stars $|\mathrm{Star}_\Sigma (\sigma )|$ forms a decent cover of $X$. Indeed, the closed star $|\mathrm{Star}_\Sigma (\sigma )|$ deformation retracts onto $|\sigma |$, which is contractible. It remains for us to show that intersections of closed stars satisfy the necessary conditions. For this, note that if $\sigma_1,\dots ,\sigma_n$ is a collection of simplices, then $\mathrm{Star}_\Sigma (\sigma_1 )\cap\cdots\cap \mathrm{Star}_\Sigma (\sigma_n )$ is empty unless $\sigma_1\cup\cdots\cup\sigma_n$ is a simplex in $\Sigma$; if the latter holds, then $\mathrm{Star}_\Sigma (\sigma_1 )\cap\cdots\cap \mathrm{Star}_\Sigma (\sigma_n ) = \mathrm{Star}_{\Sigma }(\sigma_1\cup\cdots\cup\sigma_n )$, and so we are in the case already treated.
\end{proof}
In particular, by the Cairns--Whitehead triangulation theorem \cite{Cairns1935,Whitehead1940}, we conclude that every paracompact $C^1$-manifold admits a decent cover.
\subsection{Codescent for completed cosheaves}
The key technical ingredient in our proof is the fact that $\ccoShv{-}$ satisfies {\em co}descent with respect to finite closed covers. It is well-known that $\Shv{-}$ satisfies descent with respect to finite closed covers, but the statement about $\ccoShv{-}$ is not a formal consequence of this. (For formal reasons, we have that $\Homd ({-},{-})$ takes {\em colimits} in the contravariant variable to limits; we are asking for a {\em limit} to be taken to a colimit.) Instead, codescent for $\ccoShv{-}$ follows from some difficult results of Efimov~\cite{Efimov2025b}.
\begin{thm}\label{thm: codescent}
  Let $X$ be a second-countable and locally compact ANR. If $Y_0$ and $Y_1$ are closed subspaces of $X$ with $Y_{01} = Y_0\cap Y_1$ such that $Y_0$, $Y_1$, and $Y_{01}$ are again ANRs, then the square
  \begin{equation}
    \label{eq:codescent-diagram}
    \begin{tikzcd}
      \ccoShv{Y_{01}}\arrow[r]\arrow[d]
      & \ccoShv{Y_{1}} \arrow[d] \\
      \ccoShv{Y_{0}}\arrow[r]
      & \ccoShv{Y}
    \end{tikzcd}
  \end{equation}
  is a pushout in the category of dualizable categories.
\end{thm}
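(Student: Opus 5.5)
We reduce the codescent statement to a descent statement for sheaf categories, together with a duality statement controlling how $\Homd(-,\Sp)$ interacts with the relevant limit. Note first that $Y_0\cup Y_1$ must be the ambient space $Y$ in the square; we treat $Y=Y_0\cup Y_1$ as a closed subspace of $X$.

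The starting point is classical closed descent for sheaves. The square of pullback functors
\[
\Shv{Y}\to\Shv{Y_0}\times_{\Shv{Y_{01}}}\Shv{Y_1}
\]
is an equivalence in $\Prlst$, because $\{Y_0,Y_1\}$ is a finite closed cover. All four functors are pullbacks along closed embeddings, hence proper maps, so they are strongly continuous. The first task is to upgrade this to a pullback in $\Prdual$. I would check that limits in $\Prdual$ of this shape agree with those in $\Prlst$ when the transition functors are strongly continuous and the right adjoints behave well. Concretely, this uses that $i_*=i_!$ for closed embeddings and the recollement/gluing description, which exhibits $\Shv Y$ as built from $\Shv{Y_0}$ and $\Shv{Y_1\smallsetminus Y_{01}}$.

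Dualizing a pullback does not formally give a pushout, as the paper warns. So I would instead compute the pushout $P$ of the square directly in $\Prdual$ and compare it with $\ccoShv Y$. Pushouts in $\Prdual$ are computed as in $\Prlst$, i.e.\ as the limit along right adjoints. Passing to compact-object-level descriptions, or to Efimov's presentation of $\ccoShv X$ by generators \cite[Prop~3.12]{Efimov2025b}, one then wants to show two things about the canonical functor $P\to\ccoShv Y$. First, it is essentially surjective on generators: every generating cosheaf on $Y$ splits via a Mayer--Vietoris cofiber sequence $(i_{01})_?F_{01}\to (i_0)_?F_0\oplus(i_1)_?F_1\to F$. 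Second, it is fully faithful, which amounts to checking that the mapping spectra satisfy the corresponding Mayer--Vietoris fiber sequence.

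The ANR hypotheses on $Y_0$, $Y_1$ and $Y_{01}$, together with second countability, should enter exactly here. I expect to apply Efimov's results identifying $\ccoShv Y$ with a category of cosheaves on a suitable basis of compact subsets (or with $\Shv Y$ twisted by a dualizing-type object). That identification lets the restriction to $Y_{01}$ be computed by a colimit over neighborhoods, and the ANR condition guarantees the neighborhoods can be chosen to retract.

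The main obstacle is this fully faithfulness step: showing that the comparison $P\to\ccoShv Y$ does not lose information along $Y_{01}$. Here the cosheaf world is ``completed'' and there is no formal gluing. My first attempt would use Efimov's theorem that $\ccoShv{-}$ on second-countable locally compact spaces is computed by $\Homd$ and commutes with suitable filtered colimits/limits. I would then reduce to the open-cover version: thicken $Y_0$ and $Y_1$ to open neighborhoods retracting onto them (possible since they are ANRs), where codescent holds more formally. Finally I would pass to the limit over neighborhoods.
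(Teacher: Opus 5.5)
There is a genuine gap. Your outer framing is fine: the pushout $P$ in $\Prdual$ is computed in $\Prlst$, and the theorem says that $P\to\ccoShv Y$ is an equivalence. Your first step, upgrading closed descent for $\Shv{-}$ to a pullback in $\Prdual$, is true but not usable, as you note yourself, since $\Homd(-,\Sp)$ turns colimits, not limits, into limits.

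The problem is that both halves of the comparison are left as expectations, and the route you sketch for the decisive one does not work.

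\begin{itemize}
\item \emph{No formal open-cover codescent.} For an open embedding $j$, the restriction $j^*$ is in general not strongly continuous, because $j_*$ does not preserve filtered colimits. (Take skyscrapers at a sequence of points of $U$ converging to a point outside $U$.) So $\ccoShv{-}$ has no formally induced pushforward along $j$. What is formal for an open cover is that $\Shv{-}$ is a colimit in $\Prdual$ along the strongly continuous $j_!$. Applying $\Homd(-,\Sp)$ then gives a \emph{limit} of completed cosheaves along restriction functors, not the pushout along pushforwards that you need.
\item \emph{Passing to the limit over neighborhoods.} This would require interchanging a pushout in $\Prdual$ with a cofiltered limit, and you give no argument for that.
\item \emph{The ANR retractions.} These do not help, since $\ccoShv{-}$ is far from homotopy invariant. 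Already $\ccoShv{[0,1]}$ has a nonzero compact object for every point $x$, namely the stalk functor $x^*$, and these are pairwise orthogonal.
\item \emph{Essential surjectivity.} Your Mayer--Vietoris splitting never defines $F_0,F_1,F_{01}$. There is no formal restriction $\ccoShv Y\to\ccoShv{Y_0}$: it would have to be induced by $i_*$ for the closed embedding $i$. That functor is in general not strongly continuous, because $i^!$ does not preserve colimits.
\end{itemize}

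The paper's argument supplies the missing idea and uses no geometry beyond closed descent.
\begin{itemize}
\item It applies the involution $\mathcal C\mapsto\mathcal C^\vee$, so the claim becomes that the square of duals $\ccoShv{Z}^\vee$ is a pullback in $\largecats$.
\item It embeds each $\ccoShv{Z}^\vee$ fully faithfully into $\Fun(\coShv{Z}^{\omega_1},\Sp)$, with explicit colimit generators $G\mapsto G(F)$ for $F\in\Shv Z^{\omega_1}$ \cite[Prop~3.12]{Efimov2025b}.
\item A Beck--Chevalley lemma makes these embeddings compatible with the transition functors.
\item Fully faithfulness of the comparison on the ambient categories comes from the corresponding statement for the \emph{uncompleted} cosheaf categories $\coShv{Z}$ \cite[Prop~2.31]{Ramzi2024}, combined with Efimov's results on $\Ind((-)^{\omega_1})$ \cite[Prop~1.87, 1.89]{Efimov2025}.
\item The essential image is identified by a Mayer--Vietoris decomposition of $G$ along $Y_0,Y_1,Y_{01}$ together with \cite[Prop~3.13]{Efimov2025b}.
\end{itemize}
Second countability enters only through the $\omega_1$-compactness these results need; no neighborhoods or retractions appear. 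What your plan lacks is this move: proving fully faithfulness in an uncompleted ambient category where closed descent is available, rather than by geometric thickening.
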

We will need a few lemmata. The first is often used without comment in the literature:
\begin{lem}\label{lem:beck-chevalley}
  Let
  \begin{equation}
    \label{eq:fully-faithfullness-right-adjointability}
    \begin{tikzcd}
      \mathcal C\arrow[r,"f"]\arrow[d,"i"]
      & \mathcal C'\arrow[d,"i'"] \\
      \mathcal D\arrow[r,"g"]
      & \mathcal D'
    \end{tikzcd}
  \end{equation}
  be a commutative diagram in $\largecats$ such that, and suppose that $h$ admits a right adjoint $h^R$ for each $h\in\lbrace f,i,i',g\rbrace$.
  Assume that $i$ and $i'$ are both fully faithful.
  If $\Image (g^Ri')\subseteq\Image (i)$, then~\eqref{eq:fully-faithfullness-right-adjointability} is horizontally right adjointable.
\end{lem}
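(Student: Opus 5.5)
The plan is to show directly that the Beck--Chevalley (mate) transformation
\[
  \beta\colon i f^R\longrightarrow g^R i'
\]
is an equivalence. Recall that $\beta$ is the composite $i f^R\to g^R g i f^R\simeq g^R i' f f^R\to g^R i'$, built from the unit of $g\dashv g^R$, the commutativity $gi\simeq i'f$, and the counit of $f\dashv f^R$. Horizontal right adjointability of~\eqref{eq:fully-faithfullness-right-adjointability} means precisely that $\beta$ is invertible. Fix $c'\in\mathcal C'$. The source $i f^R c'$ lies in $\Image (i)$ by construction, and the target $g^R i' c'$ lies in $\Image (i)$ by hypothesis. Since $i$ is fully faithful, $\Image (i)$ is a full subcategory equivalent to $\mathcal C$. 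It therefore suffices, by the Yoneda lemma applied inside $\Image (i)$, to show that $\beta_{c'}$ induces an equivalence on $\mathrm{Map}_{\mathcal D}(i y,-)$ for every $y\in\mathcal C$.

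For this, I will write down the chain of natural equivalences
\begin{align*}
  \mathrm{Map}_{\mathcal D}(iy, i f^R c')&\simeq \mathrm{Map}_{\mathcal C}(y, f^R c')\simeq\mathrm{Map}_{\mathcal C'}(fy,c')\simeq \mathrm{Map}_{\mathcal D'}(i'fy,i'c')\\
  &\simeq\mathrm{Map}_{\mathcal D'}(giy,i'c')\simeq\mathrm{Map}_{\mathcal D}(iy, g^R i'c').
\end{align*}
These steps use, in order, full faithfulness of $i$, the adjunction $f\dashv f^R$, full faithfulness of $i'$, the commutativity $gi\simeq i'f$, and the adjunction $g\dashv g^R$. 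This exhibits both objects as corepresenting equivalent functors on $\Image (i)$.

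The main point, and the only real obstacle, is to check that this composite equivalence agrees with postcomposition by $\beta_{c'}$. A bare abstract equivalence would also suffice for an objectwise statement, but we need the specific map $\beta$ to be invertible. I would handle this by unwinding the adjunction equivalences in terms of units and counits. The map $\mathrm{Map}(y,f^Rc')\to\mathrm{Map}(fy,c')$ is $\phi\mapsto \epsilon_f\circ f\phi$. The map $\mathrm{Map}(giy,i'c')\to\mathrm{Map}(iy,g^Ri'c')$ is $\psi\mapsto g^R\psi\circ\eta_g$. Starting from $i\phi$, the chain produces $g^R(i'\epsilon_f\circ i'f\phi)\circ\eta_g$ (using the commutativity isomorphism). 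By naturality of $\eta_g$ this equals $\beta_{c'}\circ i\phi$, which is the required compatibility; this is a routine triangle-identity diagram chase.

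As an alternative that avoids the explicit chase, I could apply $i^R$ to $\beta$. Full faithfulness gives $i^R i\simeq\id$, and every object $x\in\Image (i)$ satisfies $i i^R x\simeq x$ via the counit. Hence $\beta$ is an equivalence if and only if $i^R\beta$ is. Moreover $i^R\beta$ is identified with the mate of $f^R i'^R i'\simeq f^R$ under $(gi)^R\simeq(i'f)^R$, which is again invertible because $i'^R i'\simeq\id$. This route requires the same compatibility of mates with composition of adjunctions, so I would present the Yoneda argument above.
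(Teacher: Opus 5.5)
Your proof is correct, but it takes a different route from the paper's.

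\textbf{The paper's route.} It transposes the problem. It passes to the square of right adjoints $i^R g^R\simeq f^R (i')^R$ and cites Lurie (Higher Algebra, Rem.~4.7.4.14) to trade horizontal right adjointability for vertical left adjointability of that square. The Beck--Chevalley map of the transposed square is then
\[
if^R\to if^R(i')^Ri'\simeq ii^Rg^Ri'\to g^Ri'.
\]
The first map is a unit of $i'\dashv (i')^R$, invertible because $i'$ is fully faithful. The second is the counit of $i\dashv i^R$ evaluated on $g^Ri'$, invertible precisely because $g^Ri'$ lands in $\Image (i)$. This is a two-line formal manipulation. However, it genuinely uses the right adjoints of $i$ and $i'$, and it hands the coherence question (that the two Beck--Chevalley maps are mates of each other) to Lurie.

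\textbf{Your route.} You apply Yoneda inside the full subcategory $\Image (i)$, which contains both $if^Rc'$ and $g^Ri'c'$. This never uses $i^R$ or $(i')^R$, so it shows those hypotheses are superfluous. The price is checking by hand that the chain of mapping-space equivalences is postcomposition with $\beta_{c'}$. You do this correctly. In fact the check needs only naturality of $\eta_g$ and of $gi\simeq i'f$, not the triangle identities. It can also be done on $\pi_0$, since a morphism is invertible exactly when its image in the homotopy category is.

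Your ``alternative'' of applying $i^R$ to $\beta$ and using $i^R i\simeq\id$ and $(i')^Ri'\simeq\id$ is close in spirit to the paper's argument.
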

\begin{proof}
  Equivalently, the transpose square
  \begin{equation*}
    \begin{tikzcd}
      \mathcal D'\arrow[r,"f^R"]\arrow[d,"(i')^R"]
      & \mathcal D\arrow[d,"i^R"] \\
      \mathcal C'\arrow[r,"g^R"]
      & \mathcal C
    \end{tikzcd}
  \end{equation*}
  is vertically left adjointable~\cite[Rem~4.7.4.14]{Lurie2017}. The Beck--Chevalley morphism is given by
  \[
    ig^R\to ig^R(i')^Ri'\simeq ii^Rf^Ri'\xrightarrow\sim f^Ri',
  \]
  where the first map is the unit of the $i'\dashv (i')^R$ adjunction and the final map is the counit of the $i\dashv i^R$ adjunction.
  The latter is an equivalence since $i'$ is fully faithful.
  The former is an equivalence if $g^Ri'$ factors through $i$, i.e. if the essential image of $g^Ri'$ is contained in the essential image of $i$.
\end{proof}
\begin{prop}\label{prop:beck-chevalley-for-homd}
  Let $\mathcal C$ and $\mathcal C'$ be proper and $\omega_1$-compact dualizable stable categories, and let $\mathcal D$ be some dualizable stable category.
  If $f\colon\mathcal C\to\mathcal C'$ is strongly continuous, then
  \begin{equation}
    \label{eq:right-adjointability}
    \begin{tikzcd}[column sep=large]
      \Homd (\mathcal C',\mathcal D)^\vee\arrow[r,"((f^*)^R)^\vee"]\arrow[d,"\Phi"]
      & \arrow[d,"\Phi"] \Homd (\mathcal C,\mathcal D)^\vee\\
      \Ind(\Fun^L (\mathcal C',\mathcal D)^{\mathrm{op}})\arrow[d,phantom,"\rotatebox{270}{$\simeq$}"]\arrow[r,"\Ind ({-}\circ f^{\mathrm{op}})"]
      & \Ind (\Fun^L (\mathcal C,\mathcal D)^{\mathrm{op}})\arrow[d,phantom,"\rotatebox{270}{$\simeq$}"]\\[-1em]
      \Fun ((\mathcal C')^\vee\otimes\mathcal D,\Sp)\arrow[r]
      & \Fun ((\mathcal C')^\vee\otimes\mathcal D,\Sp)
    \end{tikzcd}
  \end{equation}
  is horizontally right adjointable.
\end{prop}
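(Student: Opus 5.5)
The plan is to deduce the statement from \cref{lem:beck-chevalley}, applied to the square \eqref{eq:right-adjointability} with $i=\Phi$ (for $\mathcal C$) and $i'=\Phi$ (for $\mathcal C'$) as the vertical maps. This requires three things: that both copies of $\Phi$ are fully faithful, that all four functors admit right adjoints, and that the essential image condition $\Image(g^R\Phi)\subseteq\Image(\Phi)$ holds, where $g=\Ind(-\circ f^{\mathrm{op}})$.

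First I would recall the description of $\Homd(\mathcal C,\mathcal D)^\vee$ for proper, $\omega_1$-compact $\mathcal C$, following Efimov. The dual of the internal Hom should embed fully faithfully and strongly continuously into $\Ind(\Fun^L(\mathcal C,\mathcal D)^{\mathrm{op}})\simeq\Fun(\mathcal C^\vee\otimes\mathcal D,\Sp)$. Its image should consist of the functors that are, in a suitable sense, ``$\omega_1$-continuous'', i.e.\ that preserve the relevant class of colimits. In the compactly generated case this is just the image of the strongly continuous functors under the Yoneda-type embedding. Properness and $\omega_1$-compactness are what make $\Phi$ fully faithful and colimit preserving with a right adjoint. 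The top and bottom horizontal maps admit right adjoints because they are colimit preserving functors between presentable categories. The bottom one is $\Ind$ of precomposition, so its right adjoint is computed pointwise on presheaves as restriction along $f^\vee\otimes\id_{\mathcal D}$.

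Under the identification with $\Fun(-\otimes\mathcal D,\Sp)$, the bottom map is left Kan extension along $(f^\vee\otimes\id)$ or its opposite, so $g^R$ is precomposition with $f^\vee\otimes\mathcal D$. The key check is that precomposing a functor in the image of $\Phi$ (for $\mathcal C'$) with $f^\vee\otimes\id$ lands in the image of $\Phi$ (for $\mathcal C$). This holds because $f$ strongly continuous implies $f^\vee$ is strongly continuous. Composing with a strongly continuous functor preserves whatever continuity condition characterizes the image, since strongly continuous functors preserve compact morphisms and $\omega_1$-compact objects.

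Once that is in place, \cref{lem:beck-chevalley} gives horizontal right adjointability. One must also verify that the top square commutes, i.e.\ that $\Phi$ is natural with respect to $((f^*)^R)^\vee$. This should follow by unwinding duality: $f^*=-\circ f$ on $\Homd$, and dualizing its right adjoint corresponds to precomposition on the $\Ind$ side.

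The main obstacle is the characterization of the essential image of $\Phi$ precisely enough to check stability under precomposition. I would first try to use Efimov's description of $\Homd(\mathcal C,\mathcal D)$ for proper $\omega_1$-compact $\mathcal C$ as $\Ind$ of the category of compact morphisms or $\omega_1$-presented pieces. Failing that, I would reduce to showing the image is generated under colimits by objects of the form $\Phi(x)$ with $x$ compact, and check that $g^R$ preserves colimits and sends these generators into the image.
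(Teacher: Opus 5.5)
Your skeleton is the paper's. You apply \cref{lem:beck-chevalley} with $i=i'=\Phi$, reduce to $\Image(g^R\Phi)\subseteq\Image(\Phi)$, and note that $g^R$ is restriction along a functor $(\mathcal C')^\vee\otimes\mathcal D\to\mathcal C^\vee\otimes\mathcal D$ and so preserves colimits. But the image condition is the whole content of the proposition, and neither of your two arguments for it goes through.

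Your first argument relies on $f^\vee$ being strongly continuous, which is false in general. Take a ring map $A\to B$ with $B$ not perfect over $A$, e.g.\ $\ZZ\to\ZZ[t]$. Then $B\otimes_A{-}$ is strongly continuous, but its dual is restriction of scalars, whose right adjoint $\mathrm{Hom}_A(B,{-})$ does not preserve colimits. Even granting it, ``preserves whatever continuity condition characterizes the image'' is not a proof, since you never identify that condition.

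Your fallback uses the wrong generators. Dualizable categories are typically not compactly generated, and neither is $\Homd(\mathcal C,\mathcal D)^\vee$. Already for $\mathcal C=\mathcal C'=\Sp$ it is $\mathcal D^\vee$, which for $\mathcal D=\Shv{\RR}$ is not compactly generated. So the objects $\Phi(x)$ with $x$ compact need not generate the image under colimits, and checking $g^R$ on them says nothing about the rest.

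What is missing is Efimov's explicit generating family \cite[Prop~3.12]{Efimov2025b}, and this is where properness and $\omega_1$-compactness are actually used. The essential image of $\Phi$ is generated under colimits by the objects $\operatorname{THC}(\mathcal C,\mathrm{ev}_{\mathcal D}({-},M))$ with $M\in(\mathcal C\otimes\mathcal D^\vee)^{\omega_1}$.

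The paper checks that $g^R$ sends such an object to the object of the same form for $\mathcal C'$, indexed by the image of $M$ under a strongly continuous functor induced by $f$. For $\mathcal D=\Sp$ these generators are just $G\mapsto G(M)$, and the check is the identity $(G\circ f)(M)\simeq G(f(M))$. Here $f(M)$ is still $\omega_1$-compact because $f^R$ preserves colimits. So the image of a generator is again a generator, and colimit preservation of $g^R$ finishes the proof. Strong continuity of $f$ thus enters through its action on the index $M$, not through $f^\vee$.
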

\begin{proof}
  We check the conditions of~\cref{lem:beck-chevalley}. The argument is similar to the proof of~\cite[Prop~3.13]{Efimov2025b}, and we use the notation from there.
  The image of
  \[
    \Homd (\mathcal C,\mathcal D)\xrightarrow\Phi\Ind (\Fun^L (\mathcal C,\mathcal D))\simeq\Ind (\mathcal C^\vee\otimes\mathcal D)
  \]
  is generated by the objects that Efimov denotes $\operatorname{THC}(\mathcal C,\mathrm{ev}_{\mathcal D}({-},M))$, see~\cite[Prop~3.12]{Efimov2025b}, for $M\in (\mathcal C\otimes\mathcal D^\vee )^{\omega_1}$. The right adjoint to the bottom horizontal arrow in~\eqref{eq:right-adjointability} is precomposition with $(f^R)^\vee\boxtimes\id\colon (\mathcal C')^\vee\otimes\mathcal D\to\mathcal C^\vee\otimes\mathcal D$, which takes $\operatorname{THC}(\mathcal C,\mathrm{ev}_{\mathcal D}({-},M))$ to $\operatorname{THC}(\mathcal C',\mathrm{ev}_{\mathcal D}({-},((f^R)^\vee\boxtimes\id ) M))$. But the latter belongs to the image of $\Phi$ by~\cite[Prop~3.12]{Efimov2025b}. Since precomposition with $(f^R)^\vee\boxtimes\id$ preserves colimits, this finishes the proof.
\end{proof}
\begin{proof}[Proof of~\cref{thm: codescent}]
  The inclusion $\Prdual\hookrightarrow\Prlst$ preserves and reflects colimits, so it suffices to show that~\eqref{eq:codescent-diagram} is a colimit in the latter category. Equivalently, the image of~\eqref{eq:codescent-diagram} under the {\em contravariant} involution $(\mathcal C\mapsto\mathcal C^\vee,F\mapsto F^\vee )$, namely the diagram
  \begin{equation}
    \label{eq:alt-codescent-diagram}
    \begin{tikzcd}
      \ccoShv{Y}^\vee\arrow[r]\arrow[d]
      & \ccoShv{Y_{1}}^\vee \arrow[d] \\
      \ccoShv{Y_{0}}^\vee\arrow[r]
      & \ccoShv{Y_{01}}^\vee,
    \end{tikzcd}
  \end{equation}
  is a pullback diagram in $\Prlst$, or equivalently in the (very large) category of large categories $\largecats$. 
  By~\cref{prop:beck-chevalley-for-homd}, the diagram~\eqref{eq:alt-codescent-diagram} is the back face in a cubical commutative diagram in the category of large categories
 \[
   \begin{small}
     \begin{tikzcd}[column sep=tiny]
       \ccoShv{Y}^\vee\arrow[rd] \arrow[rr]\arrow[dd]&& \ccoShv{Y_1}^\vee\arrow[rd]\arrow[dd] \\ 
       & \Fun (\coShv{Y}^{\omega_1},\Sp )\arrow[rr]
       \arrow[dd]
       && \Fun(\coShv{Y_1}^{\omega_1},\Sp )\arrow[dd]\\
       \ccoShv{Y_0}^\vee\arrow[rd] \arrow[rr]&& \ccoShv{Y_{01}}^\vee\arrow[rd] \\
       & \Fun (\coShv{Y_0}^{\omega_1},\Sp )\arrow[rr]
       && \Fun (\coShv{Y_{01}}^{\omega_1},\Sp ),
     \end{tikzcd}
   \end{small}
\]
in which
\begin{enumerate}[label=(\roman*)]
\item For each $Z\in\lbrace Y,Y_0,Y_1,Y_{01}\rbrace$, the functor
  \[
    \ccoShv Z^\vee\to\Fun (\coShv{Z}^{\omega_1},\Sp )\simeq\Ind ((\coShv{Z}^{\omega_1})^{\mathrm{op}})
  \]
  is fully faithful, and its essential image is generated under colimits by the objects that Efimov denotes $\operatorname{THC}(\Shv Z /\Sp ,\mathrm{ev}_{\Sp /\Sp }({-},F) )$ for $F\in\Shv Z^{\omega_1}$, see~\cite[Prop~3.12]{Efimov2025b}. In our setting, we find that for $F$ as before
  \[
    \operatorname{THC}(\Shv Z /\Sp ,\mathrm{ev}_{\Sp /\Sp }({-},F) )\colon G\mapsto G(F).
  \]
\item For $Z$ and $W\in\lbrace Y,Y_0,Y_1,Y_{01}\rbrace$ with inclusion $\iota\colon Z\hookrightarrow W$, the functor
  \[
    \Fun (\coShv{W}^{\omega_1},\Sp)\to\Fun (\coShv{Z}^{\omega_1},\Sp )
  \]
  is ${-}\circ (\iota^*)^\vee$. This functor sits inside a triple of adjunctions
  \[
    \Ind (\iota_*^\vee)\dashv \Ind ((\iota^*)^\vee)\dashv ({-})\circ (\iota^*)^\vee.
  \]
  Passing to left adjoints twice in the front face of the cubical diagram, we thus get the square
  \[
    \begin{tikzcd}[column sep=large]
       \Fun (\coShv{Y}^{\omega_1},\Sp )\arrow[r,"\Ind ((\iota_1)_*^\vee)"]
       \arrow[d,"\Ind ((\iota_1)_*^\vee)"] 
       & \Fun(\coShv{Y_1}^{\omega_1},\Sp )\arrow[d,"\Ind ((\iota_{10})_*^\vee)"]\\
       \Fun (\coShv{Y_0}^{\omega_1},\Sp )\arrow[r,"\Ind ((\iota_{01})_*^\vee)"]
       & \Fun (\coShv{Y_{01}}^{\omega_1},\Sp ),
    \end{tikzcd}
  \]
  This square exhibits
  $\Fun (\coShv{Y}^{\omega_1},\Sp )$ as the pullback
  \[
    \Fun (\coShv{Y_0}^{\omega_1},\Sp )\times_{\Fun (\coShv{Y_{01}}^{\omega_1},\Sp )}^{\mathrm{dual}} \Fun (\coShv{Y_1}^{\omega_1},\Sp )
  \]
  in the category of dualizable categories, since $\Ind ({-}^{\omega_1})\colon\Prlstuncount\to\Prdual$ is a right adjoint~\cite[Prop~1.89]{Efimov2025}
  and
  \[
    \begin{tikzcd}[column sep=large]
      \coShv{Y_{01}}\arrow[r,"((\iota_{10}^*)^R)^\vee"]\arrow[d,"((\iota_{01}^*)^R)^\vee"]
      & \coShv{Y_{1}} \arrow[d,,"((\iota_{1}^*)^R)^\vee"] \\
      \coShv{Y_{0}}\arrow[r,"((\iota_{0}^*)^R)^\vee"]
      & \coShv{Y},
    \end{tikzcd}
  \]
  is a pullback in $\Prlstuncount$ by~\cite[Prop~2.31]{Ramzi2024}. In particular, \cite[Prop~1.87]{Efimov2025} implies that the induced functor from $\Fun (\coShv{Y}^{\omega_1},\Sp )$ to the pullback \[\Fun (\coShv{Y_0}^{\omega_1},\Sp )\times_{\Fun (\coShv{Y_{01}}^{\omega_1},\Sp )}\Fun (\coShv{Y_1}^{\omega_1},\Sp )\] computed in $\Prlst$ (or equivalently in $\largecats$) is fully faithful and strongly continuous. Passing to right adjoints twice and using the fact that the two-fold right adjoint of a fully faithful functor is again fully faithful, we find that in the front face of the cubical diagram, the canonical functor from the upper left corner to the pullback (computed in $\largecats$) of the bottom horizontal and right vertical functors is fully faithful.
\end{enumerate}
It follows from points (i) and (ii) that we can identify \(\ccoShv{Y_0}\times_{\ccoShv{Y_{01}}}\ccoShv{Y_1}\) with the full subcategory of $\Fun (\coShv{Y}^{\omega_1},\Sp )$ spanned by functors $G$ such that $G\circ (\iota^*)^\vee\simeq G\circ (\iota)_*$ lies in the essential image of $\ccoShv{Z}^\vee\to\Fun (\coShv{Z}^{\omega_1},\Sp )$ for $Z\in\lbrace Y_0,Y_1,Y_{01}\rbrace$ with inclusion $\iota\colon Z\hookrightarrow Y$. But the latter is equivalent to $G\circ \iota_*\simeq\colim_{j\in J(Z)}\mathrm{ev}_{\Shv {Z}}({-},F_{Z,k})$, where each $F_{Z,k}$ lies in $\Shv{Z}^{\omega_1}$. But for such $G$ we find a pushout
\[
  \begin{tikzcd}
    G\arrow[r]\arrow[d]
    & \left(\colim_{j\in J(Y_1)}\mathrm{ev}_{\Shv {Y_1}}({-},F_{j,Y_1})\right)\circ\iota_1^*\arrow[d]\\
    \left(\colim_{j\in J(Y_0)}\mathrm{ev}_{\Shv {Y_0}}({-},F_{j,Y_0})\right)\circ\iota_0^*\arrow[r]
    & \left(\colim_{j\in J(Y_{01})}\mathrm{ev}_{\Shv {Y_{01}}}({-},F_{j,Y_{01}})\right)\circ (\iota_0\iota_{01})^*,
  \end{tikzcd}
\]
and we find also that
\[
  \left(\colim_{j\in J(Z)}\mathrm{ev}_{\Shv {Z}}({-},F_{j,Z})\right)\circ \iota^*
\simeq   \colim_{j\in J(Z)}\mathrm{ev}_{\Shv {Z}}({-},\iota_*F_{j,Z}),
\]
which belongs to the essential image of
\[
    \ccoShv{Y}^\vee\to\Fun (\coShv{Y}^{\omega_1},\Sp )\simeq\Ind ((\coShv{Y}^{\omega_1})^{\mathrm{op}})
  \]
  by~\cite[Prop~3.13]{Efimov2025b}, finishing the proof.
\end{proof}
\subsection{Proof of~\cref{intro epimorphism}}
We can now prove~\cref{intro epimorphism}, or rather the more general~\cref{thm: epimorphism}.
\begin{proof}[Proof of~\cref{thm: epimorphism}]
  Assume first that $X$ is contractible. In this case, we claim that the assembly functor is equivalent to the functor
  \[
    \ccoShv X = \Homd (\Shv X,\Sp )\to \Homd (\Sp,\Sp )\simeq\Sp
  \]
  that is induced by the strongly continuous functor $X^*\colon\Sp\to\Shv X$. Indeed, the map $\psi_X^*\colon\Sp\simeq\Sp^{\type X}\to\Shv X$ identifies with $X^*$, so the Bartels--Nikolaus copairing $\delta$ is in this case left adjoint to
  \[
    \begin{tikzpicture}
      \node (V0) at (0,0){$\Shv X$};
      \node (V1) at (3,0){$\Shv X\otimes\Sp$};
      \node (V2) at (7,0){$\Shv{X\times X}$};
      \node (V3) at (10,0){$\Shv X$};
      \node (V4) at (12.5,0){$\Sp$.};

      \draw[->,draw=none] (V0) -- (V1) node[midway]{$\simeq$};
      \draw[->] (V1) to node[midway,above]{$\id\otimes X^*$} (V2);
      \draw[->] (V2)to node[midway,above]{$\Delta_X^*$} (V3);
      \draw[->] (V3) to node[midway,above]{$X_*$} (V4);
      \draw[->] (V0) to [out=340,in=200]  node[midway,above]{$\id$} (V3);
    \end{tikzpicture}
  \]
  Here the composition of the first three arrows is canonically identified with $\Delta_X^*(\id\times X)^*\simeq (\Delta_X\circ\id\times X)^*$, and $\Delta_X\circ\id\times X=\id$. Hence $\delta$ is left adjoint to $X_*$, proving that $\delta$ is given by $X^*$, and by construction the Bartels--Nikolaus assembly functor then identifies with the functor induced by $X^*$. We have adjunctions
  \[
    X_\sharp\dashv X^*\dashv X_*\dashv X^!,
  \]
  and contractibility of $X$ implies that $X^*$ is fully faithful. The latter means that $X_\sharp X^*\simeq\id$. Since both $X_\sharp$ and $X^*$ are strongly continuous, this formula witnesses that $X^*$ is a split monomorphism in the category of dualizable categories. Since the Bartels--Nikolaus assembly functor $\BNass$ is contravariantly induced by $X^*$, it follows that $\BNass$ is a split epimorphism, and in particular that it is a reflective localization.

  Now consider an arbitrary compact ANR $X$ with a decent cover $\lbrace X_i\rbrace_{i\in I}$. We consider the power set $\mathcal P(I)$ as a poset under reverse inclusion, and we let $\mathcal P_0$ denote the subposet spanned by finite subsets $J\subseteq I$ such that $\bigcap_{j\in J}X_j$ is non-empty.
  For each such $J$, we let $X_J = \bigcap_{j\in J}X_j$ and $K_J = \type{X_J}$. We have a natural transformation
  \begin{equation}
    \label{eq:nat-transform-of-functors}
    \ccoShv{X_J}\to\Sp^{K_J}
  \end{equation}
  of functors from $\mathcal P_0$ into the category of dualizable stable categories, given pointwise by the Bartels--Nikolaus assembly functor.
  By the case we have already treated, this natural transformation is pointwise a reflective localization.
  But then the top horizontal functor in the diagram
  \[
    \begin{tikzcd}
      \colim_{J\in\mathcal P_0}\ccoShv{X_J}\arrow[r]\arrow[d]
      & \colim_{J\in\mathcal P_0}\Sp^{K_J}\arrow[d]\\
      \ccoShv{X}\arrow[r]
      & \Sp^{K}
    \end{tikzcd}
  \]
  is a reflective localization. Here the vertical arrows are induced by the pushforwards $\ccoShv{X_J}\to\ccoShv X$ and $\Sp^{K_J}\to\Sp^K$.
  The left vertical arrow is an equivalence by~\cref{thm: codescent}, and the right vertical arrow is an equivalence by a version of the nerve theorem~\cite[Cor~1.5]{Dugger_Isaksen2004}. We conclude that the Bartels--Nikolaus assembly functor for $X$ is a reflective localization as claimed.
\end{proof}


\section{The torsion cosheaf of a homotopy equivalence}
\label{sec: htpy eq}
The heart of this article is the following construction:
\begin{cons}\label{cons:torsion-cosheaf}
  Let $f\colon Y\to X$ be a continuous map between compact ANRs. There is an induced map
  \begin{equation}
    \label{eq:map-of-wall-objects}
    f_?\WallObject Y\to \WallObject X,
  \end{equation}
  defined in terms of the corresponding cosheaves as the composite
  \[
    \begin{tikzcd}
      Y_\sharp f^*\arrow[r,"\text{unit}"]
      & Y_\sharp f^*X^*X_\sharp
      \arrow[rr,"\text{functoriality}"above, "\sim"below]
      & & Y_\sharp Y^*X_\sharp \arrow[r,"\text{counit}"]
      & X_\sharp .
    \end{tikzcd}
  \]
  We define the \tdef{torsion cosheaf} of $f$, denoted $\mdef{\Tors f}$, by the formula
  \[
    \Tors f = \cofiber (f_?\WallObject Y\to\WallObject X)\in \ccoShv X.
  \]
\end{cons}
Our main task in this section is to show that $\Tors f$ recovers the Whitehead torsion of yore. This is the content of~\cref{subsec: K-theory class} below, after some preliminaries in~\cref{subsec:rel-whitehead} and~\cref{subsec: recollection on whitehead}. The remaining sections give some basic properties of and intuition about $\Tors f$.
\begin{rmk}
  Unfolding the construction, we see that the torsion cosheaf $\Tors f\in\ccoShv X$ is associated with the cosheaf
  \[
    U\mapsto\cofiber (\suspinf{\type{f^{-1}(U)}}\to\suspinf{\type{U}}),
  \]
  where $\suspinf{\type{f^{-1}(U)}}\to\suspinf{\type{U}}$ is induced by the restriction of $f$ to $f^{-1}(U)$. In other words, $\Tors f$ measures the extent to which $f$ is a homology isomorphism locally on $X$.
\end{rmk}

\subsection{Relation to the Whitehead category}\label{subsec:rel-whitehead}
The goal of this section is to show
\begin{thm}\label{thm:torsion-belongs-to-whitehead}
  If $f\colon Y\to X$ is a map between compact ANRs that induces isomorphisms on all homotopy groups, then the torsion cosheaf $\Tors f$ is a compact object in the Whitehead category $\WhiteheadCat X\subseteq\ccoShv X$.
\end{thm}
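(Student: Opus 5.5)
We need to show two things: that $\Tors f$ is compact in $\ccoShv X$, and that $\BNass(\Tors f)\simeq 0$. Compactness in $\WhiteheadCat X$ then follows from compactness in $\ccoShv X$. The reason is that the inclusion $\WhiteheadCat X\hookrightarrow\ccoShv X$ is the kernel of a strongly continuous functor, so it preserves filtered colimits and reflects compactness of objects lying in the kernel.

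\emph{Step 1 (compactness).} By the remark identifying $\ccoShv X^\omega$ with $\Fun^{LL}(\Shv X,\Sp)$, both $\WallObject X$ and $f_?\WallObject Y$ are compact. Indeed, $\WallObject Y$ is compact by definition. Moreover, $f_?$ is induced by the strongly continuous functor $f^*$, so it is itself strongly continuous (its right adjoint is induced from the right adjoint of $f^*$), and hence $f_?$ preserves compact objects. Concretely, $f_?\WallObject Y$ corresponds to $Y_\sharp f^*$. This is strongly continuous because $f$ is proper (both spaces are compact Hausdorff): we have $Y_\sharp f^*\dashv f_*Y^*$, and $f_*Y^*$ has the further right adjoint $Y_*f^!$. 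Compact objects are closed under cofibers, so $\Tors f$ is compact. I also need to check that the map \eqref{eq:map-of-wall-objects} as defined on functors is the image of a map in $\ccoShv X^\omega$. This holds because $\Fun^{LL}(\Shv X,\Sp)\subseteq\Fun^L$ is a full subcategory.

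\emph{Step 2 (vanishing under assembly).} I apply $\BNass$, which is exact. By naturality of the assembly functor in proper maps, $\BNass(f_?\WallObject Y)\simeq f_\sharp\BNass_Y(\WallObject Y)$. By the Bartels--Nikolaus theorem, the latter is $f_\sharp\SS_{\type Y}$, and $\BNass(\WallObject X)\simeq\SS_{\type X}$. Since $\type f$ is an equivalence of anima (Whitehead's theorem, since ANRs have the homotopy type of CW complexes and $f$ is a weak equivalence), we have $f_\sharp f^*\simeq\id$ and $f_\sharp\SS_{\type Y}\simeq\SS_{\type X}$. It then remains to show that $\BNass$ applied to \eqref{eq:map-of-wall-objects} is this equivalence rather than some other map. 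Then $\BNass(\Tors f)$ is the cofiber of an equivalence, hence zero.

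\emph{Main obstacle.} The hard part is identifying the map. My plan is to first prove that $\BNass(\eqref{eq:map-of-wall-objects})$ is, under the identifications above, the counit $f_\sharp f^*\SS_{\type X}\to\SS_{\type X}$ of $\type f$. This counit is an equivalence when $\type f$ is an equivalence. To do this, I would note that \eqref{eq:map-of-wall-objects} is adjoint, via $f_?\dashv f_?^R$, to a map $\WallObject Y\to f_?^R\WallObject X$, which is the canonical comparison built from units and counits. The naturality equivalence $\BNass_X f_?\simeq f_\sharp\BNass_Y$ is compatible with these units and counits, so the question reduces to naturality of the equivalence $\BNass(\WallObject{-})\simeq\SS$ along $f$. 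If that compatibility is not available as stated, I would use a fallback argument that avoids it. Since $\Sp^{\type X}$ is generated by $\SS$ at points, a map $\SS_{\type X}\to\SS_{\type X}$ is an equivalence iff it is so after evaluation at each component. I would then check this by restricting to a point $x\in X$ and a lift in $Y$, reducing via naturality to the case of a map from a point, where the map is visibly the identity on $\SS$.
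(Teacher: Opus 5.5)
Your outline is the same as the paper's proof:
\begin{itemize}
\item $\WallObject X$ and $f_?\WallObject Y$ are compact by strong continuity;
\item compact objects are closed under cofibres;
\item compactness is reflected along the fully faithful strongly continuous inclusion $\WhiteheadCat X\hookrightarrow\ccoShv X$;
\item $\BNass(\Tors f)\simeq 0$ because $\BNass$ sends \eqref{eq:map-of-wall-objects} to the counit $f_\sharp f^*\SS_{\type X}\to\SS_{\type X}$.
\end{itemize}
The gap is the last identification, which is the real content of the proof; the paper isolates it as \cref{lem:induced-map-on-parametrized-spectra}. Put $K=\type X$ and $L=\type Y$. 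The paper writes $\BNass(A)\simeq(\widetilde A\otimes\id)\circ\delta$ for compact $A$. It then passes to right adjoints to get explicit equivalences $\BNass(\WallObject X)^R\simeq K_*$ and $\BNass(f_?\WallObject Y)^R\simeq L_*f^*$, built only from functoriality, proper base change and the counit $(\psi_X)_*\psi_X^*\simeq\id$. Finally it writes \eqref{eq:map-of-wall-objects} as a pasted Beck--Chevalley map and matches units with counits. Your main plan instead asserts two things: that the naturality equivalence $\BNass_Xf_?\simeq f_\sharp\BNass_Y$ is compatible with units and counits, and that $\BNass(\WallObject{-})\simeq\SS$ is natural along $f$. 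Neither the naturality proposition nor the Bartels--Nikolaus theorem gives this, since each only supplies \emph{some} equivalence. As written, this step assumes what is to be proved.

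Your fallback could become a valid and lighter argument, showing the map is an equivalence without identifying it, but two ingredients are missing. Write $\phi_g$ for the map \eqref{eq:map-of-wall-objects} attached to a map $g$, including point inclusions $x\colon\pt\to X$.

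First, $\BNass$ is natural for pushforward, not for restriction to points. So evaluation at $x=f(y)$ must be realized as $\mathrm{map}(x_\sharp\SS,-)$, and the test map comes from the factorization $\phi_x\simeq\phi_f\circ f_?\phi_y$ (units and counits compose, as in the composition formula).

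Second, the point case is not ``visibly the identity''. The map $\BNass_X(\phi_x)\colon x_\sharp\SS\to\SS_{\type X}$ is $b$ times the counit for some $b\in\pi_0\SS\cong\ZZ$, and you must show $b=\pm1$. Naturality along $X\to\pt$ does this. It identifies $X_\sharp\BNass_X(\phi_x)$ with $X_?\phi_x$, which is $x^*$ of the unit $X^*\SS\to X^*X_\sharp X^*\SS$. That is a map $\SS\to\SS[\type X]$ split by the augmentation, and comparing on $\pi_0$ forces $b=\pm1$.

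Then, since $f_\sharp$ is an equivalence, $x^*\BNass_X(\phi_f)$ is a $\pi_0$-isomorphism between spectra equivalent to $\SS$, hence an equivalence. As $f(Y)$ meets every component of $X$, $\BNass_X(\phi_f)$ is an equivalence. This weaker conclusion suffices here, but the paper later needs the precise identification with the counit to compute the graded pieces in the proof of \cref{thm:torsion-is-torsion}.
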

The theorem is an almost immediate consequence of the following lemma, which we will have other uses for later:
\begin{lem}\label{lem:induced-map-on-parametrized-spectra}
  Let $f\colon Y\to X$ be as in~\cref{cons:torsion-cosheaf}. The Bartels--Nikolaus assembly functor takes~\eqref{eq:map-of-wall-objects} to the canonical map of parametrized spectra
  \[
    f_\sharp\SS_{\type Y}\simeq f_\sharp f^*\SS_{\type X}\xrightarrow{\mathrm{counit}}\SS_{\type X}.
  \]
\end{lem}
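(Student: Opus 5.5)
We need to show that $\BNass$ sends the map $f_?\WallObject Y\to\WallObject X$ to the counit $f_\sharp f^*\SS_{\type X}\to\SS_{\type X}$. The plan is to use two facts from the paper: the naturality of $\BNass$ in $f$, and the Bartels--Nikolaus theorem $\BNass(\WallObject X)\simeq\SS_{\type X}$. Together these identify source and target. Naturality gives
\[
\BNass_X(f_?\WallObject Y)\simeq f_\sharp\BNass_Y(\WallObject Y)\simeq f_\sharp\SS_{\type Y},
\]
and the target is $\SS_{\type X}$. What remains is to identify the map itself. For this I would use the universal property of the counit: maps $f_\sharp\SS_{\type Y}\to\SS_{\type X}$ correspond to maps $\SS_{\type Y}\to f^*\SS_{\type X}$. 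So it suffices to show that the adjoint of $\BNass$ applied to \eqref{eq:map-of-wall-objects} is the canonical identification $\SS_{\type Y}\simeq f^*\SS_{\type X}$.

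The key steps, in order, are as follows.

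First, I write \eqref{eq:map-of-wall-objects} as the composite of a unit and a counit in the 2-category of strongly continuous functors out of $\Shv X$. The unit is $Y_\sharp f^*\to Y_\sharp f^*X^*X_\sharp$; after the identification $f^*X^*\simeq Y^*$, the counit is $Y_\sharp Y^*X_\sharp\to X_\sharp$.

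Second, I apply $\BNass$. It is defined by pairing with the copairing $\delta$, so it is functorial in natural transformations between strongly continuous functors $\Shv X\to\Sp$. Concretely, $\BNass_X(G)=(G\otimes\id)(\delta_X)$. Hence the image of the map is obtained by evaluating the composite transformation on $\delta_X$.

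Third, I compute the image of the counit part. The transformation $Y_\sharp Y^*X_\sharp\to X_\sharp$ factors as $X_\sharp f_\sharp f^* \to X_\sharp$ precomposed on $X_\sharp$, using $Y_\sharp=X_\sharp f_\sharp$ on sheaves. Evaluating on $\delta_X$ and using the contractible-fibre naturality $\BNass_X\circ f_?\simeq f_\sharp\circ\BNass_Y$, this should become the counit $f_\sharp f^*\to\id$ on $\SS_{\type X}$. The unit part $X^*X_\sharp$ is absorbed by the identification $\BNass(\WallObject X)\simeq\SS_{\type X}$: the unit of $X_\sharp\dashv X^*$ evaluated against $\delta$ gives the equivalence used in that theorem.

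A cleaner route to the whole argument is a reduction to the universal case $f=\id$ and to $X=\pt$. Maps $f_?\WallObject Y\to\WallObject X$ arising this way are natural in $f$. Since $\SS_{\type X}$ is the unit and $\type Y$ is an anima, a natural map $f_\sharp\SS_{\type Y}\to\SS_{\type X}$ is determined by its adjoint over each point of $\type Y$. By naturality, restricting along a point $y\colon\pt\to Y$ reduces the computation to the map $y_?\WallObject{\pt}\to\WallObject X$. Under $\BNass$ this is $y_\sharp\SS\to\SS_{\type X}$, and it suffices to check that this map is the counit.

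The main obstacle is this coherence step: verifying that the identification $\BNass(\WallObject X)\simeq\SS_{\type X}$ is itself natural in proper maps, that is, compatible with $f_?$ and $f_\sharp$ via the maps \eqref{eq:map-of-wall-objects}. The Bartels--Nikolaus theorem as quoted gives only an objectwise identification. I would try to prove the naturality by unwinding that theorem's proof. That proof presumably identifies $\BNass(\WallObject X)=(X_\sharp\otimes\id)(\delta)$ as the left adjoint to \eqref{eq:deltaR} evaluated after $X^*$, which yields $\SS_{\type X}$ by a unit/counit manipulation. Each piece of that manipulation is manifestly natural in $f$, which would give the lemma.
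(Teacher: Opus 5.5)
\paragraph{Verdict: genuine gap.} You identify the source and target objects correctly, using naturality of $\BNass$ and $\BNass(\WallObject X)\simeq\SS_{\type X}$. You also see correctly that the real content of the lemma is the identification of the \emph{map}. Neither of your shortcuts for that step works, and your fallback restates the lemma rather than proving it.

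\paragraph{The reduction to points fails.} It rests on the claim that a map $f_\sharp\SS_{\type Y}\to\SS_{\type X}$ is determined by the restrictions of its adjoint $\SS_{\type Y}\to f^*\SS_{\type X}\simeq\SS_{\type Y}$ to the points of $\type Y$. This is false. Such adjoints form $\Map_{\Sp}(\SS,Y_*\SS_{\type Y})$, whose $\pi_0$ is the stable cohomotopy group $\pi^0_s(\type Y_+)$, and restriction to points only sees the degree on each component. For $Y=S^1$ one has $\pi^0_s(S^1_+)\cong\ZZ\oplus\ZZ/2$. The class $(1,\eta)$ is a self-map of $\SS_{S^1}$ that agrees with the identity at every point but is not homotopic to it. Pointwise information would suffice for \cref{thm:torsion-belongs-to-whitehead}, which only needs an equivalence. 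It does not suffice for \cref{thm:torsion-is-torsion}, which takes cofibres of this specific map.

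\paragraph{The ``Third'' step uses a functor that does not exist.} You write $Y_\sharp\simeq X_\sharp f_\sharp$ on sheaves, but $f^*\colon\Shv X\to\Shv Y$ has no left adjoint in general. For the closed inclusions of skeleta used later, $f^*$ does not commute with infinite products: take a product of skyscrapers at points accumulating on the subspace. Only $X_\sharp$ and $Y_\sharp$ exist. Any factorisation of the counit through $f_\sharp f^*$ has to be made on parametrized spectra, for example $Y_\sharp Y^*\simeq L_\sharp L^*\simeq K_\sharp f_\sharp f^*K^*$ with $K=\type X$, $L=\type Y$.

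\paragraph{What the paper does instead.} Your last paragraph points in the paper's direction, but ``each piece is manifestly natural in $f$'' is the lemma restated. What is missing is the mechanism that reduces it to bookkeeping, which is to pass to right adjoints.
\begin{itemize}
\item For a compact $A$ with associated functor $\widetilde A\colon\Shv X\to\Sp$, one has $\BNass(A)\simeq(\widetilde A\otimes\id)\circ\delta$, with right adjoint $X_*\Delta_X^*(\widetilde A^R\otimes\psi_X^*)$.
\item For $\widetilde A^R=X^*=\psi_X^*K^*$ this right adjoint is $K_*$.
\item For $\widetilde A^R=f_*Y^*=f_*\psi_Y^*L^*$ it is $L_*f^*$. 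This uses proper base change for the pullback of $\Delta_X$ along $f\times\id\colon Y\times X\to X\times X$ (which is $Y$, embedded as the graph of $f$), together with full faithfulness of $\psi_Y^*$.
\end{itemize}
This computes the source directly. Going through the naturality equivalence $\BNass\circ f_?\simeq f_\sharp\circ\BNass$, as you do, would leave its structure maps to be unwound and compared with the counit as well.

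Next, the right-adjoint transpose of \eqref{eq:map-of-wall-objects} is $X^*\to f_*Y^*$. This is the Beck--Chevalley map $\psi_X^*K^*\to f_*\psi_Y^*L^*$ of the two stacked squares relating $\Sp\to\Sp^K\to\Shv X$ and $\Sp\to\Sp^L\to\Shv Y$. It factors as a unit of $f^*\dashv f_*$ on parametrized spectra, then a unit on sheaves, then a counit on parametrized spectra.

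Once source, target and map are all composites of units, counits and functoriality equivalences, one tracks that the image is the unit $K_*\to K_*f_*f^*\simeq L_*f^*$. The paper leaves this last tracking as an exercise. The mate of that unit is the counit $f_\sharp f^*\SS_{\type X}\to\SS_{\type X}$.
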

\begin{proof}
  Let $A\in\ccoShv X$ be a compact object, or equivalently a strongly continuous functor $A\colon\Sp\to\ccoShv X$. The corresponding cosheaf $\widetilde A\in\Shv X$ is defined by the following string diagram
  \begin{equation}\label{eq:cosheaf-from-cpt-object}
    \begin{tikzpicture}[baseline=(current bounding box.center)]
      \node (A) at (-1,1.25) [rectangle, draw, minimum width=0.6cm, minimum height=0.6cm] {$A$};
      \node (ev) at (0,0) [rectangle, draw, minimum width=0.6cm, minimum height=0.6cm] {$\mathrm{ev}$};
      \node (shvXLHS) at (-5,2) {$\Shv X$};
      \node (tilA) at (-5,0)[rectangle, draw, minimum width=0.6cm, minimum height=0.6cm] {$\widetilde A$};
      \node (shvXRHS) at (1,2){$\Shv X$};
      \node (equals) at (-2.5,0.5){$=$};

      \draw (shvXLHS) -- (tilA);
      \draw (A.south) to [out=270, in=90] (ev.north);
      \draw (shvXRHS.south) to [out=270, in=90](ev.north);
    \end{tikzpicture}
  \end{equation}
  Using this formula, we compute the value of $A$ under the Bartels--Nikolaus assembly functor
  \begin{equation}\label{eq:value_under_assembly}
    \begin{tikzpicture}[baseline=(current bounding box.center)]
      \node (A) at (-1,3.5) [rectangle, draw, minimum width=0.6cm, minimum height=0.6cm] {$A$};
      \node (ev) at (0,0) [rectangle, draw, minimum width=0.6cm, minimum height=0.6cm] {$\mathrm{ev}$};
      \node (delta) at (1,2) [rectangle, draw, minimum width=0.6cm, minimum height=0.6cm] {$\delta$};
      \node (spX) at (2,-1){$\Sp^{K}$};

      \draw (A.south) to [out=270, in=90] (ev.north);
      \draw (delta.south) to [out=270, in=90](ev.north);
      \draw (delta.south) to [out=270, in=90](spX.north);
    \end{tikzpicture}
    =\quad
    \begin{tikzpicture}[baseline=(current bounding box.center)]
      \node (A) at (-1,2) [rectangle, draw, minimum width=0.6cm, minimum height=0.6cm] {$A$};
      \node (ev) at (0,0) [rectangle, draw, minimum width=0.6cm, minimum height=0.6cm] {$\mathrm{ev}$};
      \node (delta) at (1,3.5) [rectangle, draw, minimum width=0.6cm, minimum height=0.6cm] {$\delta$};
      \node (spX) at (2,-1){$\Sp^{K}$};

      \draw (A.south) to [out=270, in=90] (ev.north);
      \draw (delta.south) to [out=270, in=90](ev.north);
      \draw (delta.south) to [out=270, in=90](spX.north);
    \end{tikzpicture}
    =\quad
    \begin{tikzpicture}[baseline=(current bounding box.center)]
      \node (tilA) at (0,0) [rectangle, draw, minimum width=0.6cm, minimum height=0.6cm] {$\widetilde{A}$};
      \node (delta) at (1,3.5) [rectangle, draw, minimum width=0.6cm, minimum height=0.6cm] {$\delta$};
      \node (spX) at (2,-1){$\Sp^{K}$,};

      \draw (delta.south) to [out=270, in=90](ev.north);
      \draw (delta.south) to [out=270, in=90](spX.north);
    \end{tikzpicture}
  \end{equation}
  where we have put $K = \type X$.
  For the final equality, we used~\eqref{eq:cosheaf-from-cpt-object}.
  We thus have a natural identification of $\BNass (A)$ with the composite
  \[
    \Sp\xrightarrow{\delta}\Shv X\otimes\Sp^{K}\xrightarrow{\widetilde A\otimes\id}\Sp^{K},
  \]
  where $\delta$ is the copairing constructed by Bartels and Nikolaus. That is, $\delta$ is left adjoint to
  \[
    \Shv X\otimes\Sp^K\xrightarrow{\id\otimes\psi_X^*}\Shv{X\times X}\xrightarrow{\Delta_X^*}\Shv X\xrightarrow{X_*}\Sp.
  \]
  Taking $A = \WallObject X$ in~\eqref{eq:value_under_assembly} and recalling that $\widetilde A = X_\sharp$, we then find that $\BNass (\WallObject X )\colon\Sp\to\Sp^K$ is right adjoint to
  \begin{align}\label{eq:ident_of_wall}
    \begin{split}
      &X_*\Delta_X^*(\id_{\Shv X}\otimes\psi_X^*)(\psi_X^*\otimes\id_{\Sp^K} )(K^*\otimes\id_{\Sp^K})\\
      &\quad\simeq X_*\Delta_X^*(\psi_X^*\otimes\psi_X^*)(K^*\otimes\id_{\Sp^K})\\
      &\quad\simeq X_*\psi_X^*(\Delta_K)^*(K^*\otimes\id_{\Sp^K})\\
      &\quad\simeq K_*(\psi_X)_*\psi_X^*(\Delta_K)^*(K^*\otimes\id_{\Sp^K})\\
      &\quad\simeq K_*(\Delta_K)^*(K^*\otimes\id_{\Sp^K})\\
      &\quad\simeq K_*.
    \end{split}
  \end{align}
  Here the first three equivalences are functoriality, the fourth is the counit $(\psi_X)_*\psi_X^*\to\id$ (which is an equivalence since $\psi_X^*$ is fully faithful), and the fourth is functoriality again since $(K\times\id)\circ\Delta_K=\id$. Passing back to left adjoints, we recover Bartels and Nikolaus's calculation that $\BNass (\WallObject X )$ is $K^* = \SS_K$.

  Similarly, taking $A = f_?\WallObject Y$ and putting $L = \type Y$, we can rewrite the right adjoint of $\BNass (f_?\WallObject Y)\colon\Sp\to\Sp^X$ as
  \begin{align}\label{eq:ident_of_pushforward_wall}
    \begin{split}
      &X_*\Delta_X^*(\id_{\Shv X}\otimes\psi_X^*)(f_*\otimes\id )(\psi_Y^*\otimes\id_{\Sp^K} )(L^*\otimes\id_{\Sp^K})\\
      &\quad\simeq X_*\Delta_X^*(f_*\otimes\id )(\id_{\Shv X}\otimes\psi_X^*)(\psi_Y^*\otimes\id_{\Sp^K} )(L^*\otimes\id_{\Sp^K})\\
      &\quad\simeq X_*f_*\Delta_Y^*(\id_{\Shv X}\otimes f^*)(\psi_Y^*\otimes\psi_X^*)(L^*\otimes\id_{\Sp^K})\\
      &\quad\simeq X_*f_*\Delta_Y^*(\psi_Y^*\otimes\psi_Y^*)(\id_{\Sp^L}\otimes f^*)(L^*\otimes\id_{\Sp^K})\\
      &\quad\simeq L_*(\psi_Y)_*\psi_Y^*\Delta_L^*(\id_{\Sp^L}\otimes f^*)(L^*\otimes\id_{\Sp^K})\\
      &\quad\simeq L_*(\psi_Y)_*\psi_Y^*\Delta_L^*(L^*\otimes\id_{\Sp^L})(\id_{\Sp}\otimes f^*)\\
      &\quad\simeq L_*\Delta_L^*(L^*\otimes\id_{\Sp^L})(\id_{\Sp}\otimes f^*)\\
      &\quad\simeq L_*f^*.
    \end{split}
  \end{align}
  Here the first equivalence is functoriality, the second is the Beck--Chevalley morphism associated with the pullback
  \[
    \begin{tikzcd}
      Y\arrow[d,"\Delta_Y"]\arrow[r,"f"]
      & X\arrow[dd,"\Delta_X"]\\
      Y\times Y \arrow[d,"\id\times f"]\\
      Y\times X\arrow[r,"f\times\id"]
      & X\times X,
    \end{tikzcd}
  \]
  the third and fourth equivalences are functoriality, the fifth is the counit $(\psi_Y)_*\psi_Y^*\to\id$, and the final equivalence is functoriality. Passing back to left adjoints, we get an equivalence $\BNass (f_?\WallObject Y) = f_\sharp\SS_L$ as claimed.

  The map $f_?\WallObject Y\to\WallObject X$ is the Beck--Chevalley morphism associated with taking left adjoints of the vertical edges of the outer rectangle in the commutative diagram
  \begin{equation}
    \label{eq:composition-of-bc-morphisms}
    \begin{tikzcd}
      \Shv X\arrow[r,"f^*"]
      &\Shv Y\\
      \Sp^{K}\arrow[r,"f^*"]\arrow[u,"(\psi_X)^*"]
      & \Sp^{L}\arrow[u,"(\psi_Y)^*"]\\
      \Sp\arrow[r,equals]\arrow[u," K^*"]
      &\Sp.\arrow[u,"L^*"]
    \end{tikzcd}
  \end{equation}
  So as to compare with the calculations~\eqref{eq:ident_of_wall} and~\eqref{eq:ident_of_pushforward_wall} (in which we passed to right adjoints), we must consider the transpose Beck--Chevalley morphism. This is the Beck--Chevalley morphism associated with taking right adjoints of the horizontal arrows in the outer rectangle of~\eqref{eq:composition-of-bc-morphisms}. Since Beck--Chevalley morphisms compose, we can also write the Beck--Chevalley morphism associated with the outer rectangle in~\eqref{eq:composition-of-bc-morphisms} as the composite
  \begin{equation*}
    \begin{tikzpicture}
      \node (V0) at (0,0){$(\psi_X)^*K^*$};
      \node (V1) at (2.5,0){$(\psi_X)^*f_*f^*K^*$};
      \node (V2) at (4.9,0){$(\psi_X)^*f_*L^*$};
      \node (V3) at (7.75,0){$f_*f^*(\psi_X)^*f_*L^*$};
      \node (V4) at (10.6,0){$f_*(\psi_Y)^*f^*f_*L^*$};
      \node (V5) at (13.4,0){$f_*(\psi_Y)^*L^*$,};

      \draw[->] (V0)--(V1);
      \draw[->,draw=none] (V1)--(V2) node[midway]{$\simeq$};
      \draw[->] (V2)--(V3);
      \draw[->,draw=none] (V3)--(V4) node[midway]{$\simeq$};
      \draw[->] (V4)--(V5);
    \end{tikzpicture}
  \end{equation*}
  where the equivalences are functoriality, the first arrow is the unit for the $f^*\dashv f_*$ adjunction for parametrized spectra, the second arrow is the unit for the $f^*\dashv f_*$ adjunction for sheaves, and the third arrow is the counit for the $f^*\dashv f_*$ adjunction for parametrized spectra. Since both the identifications~\eqref{eq:ident_of_wall} and~\eqref{eq:ident_of_pushforward_wall} and the morphism $(\psi_X)^*K^*\to f_*(\psi_Y)^*L^*$ are defined entirely in terms of functoriality coherence maps and unit/counit transformations, it is then an exercise in using that (co)unit transformations compose to show that $\BNass (f_?\WallObject Y\to\WallObject X)$ is identified (after passing to right adjoints) with the unit transformation $K^*\to K^*f^*f_*\simeq L^*f_*$. Passing back to left adjoints, this is precisely the map $f_\sharp\SS_L\to\SS_K$ as claimed.
\end{proof}
\begin{proof}[Proof of~\cref{thm:torsion-belongs-to-whitehead}]
  Note that both $f_?\WallObject Y$ is a compact object in $\WhiteheadCat X$ since $f_?$ is strongly continuous. Hence $\Tors f$ is the cofiber of a map between compact objects in $\ccoShv X$, and hence itself a compact object of the latter. By \cref{lem:induced-map-on-parametrized-spectra}, we find that $\BNass (f_?\WallObject Y\to\WallObject X)$ identifies with the counit transformation $f_\sharp f^*\SS_{\type X}\to\SS_{\type X}$, which is an equivalence since $f^*\colon\Sp^{\type X}\to\Sp^{\type Y}$ is an equivalence. Hence $\Tors f$ lies in $\WhiteheadCat X$. The inclusion $\WhiteheadCat X\hookrightarrow\ccoShv X$ is fully faithful and strongly continuous, whence it reflects compactness, finishing the proof.
\end{proof}

\subsection{Recollection on Whitehead torsion}\label{subsec: recollection on whitehead}
Suppose $f\colon Y\to X$ is a map between compact ANRs, and that $X$ admits a decent cover. Since $\Tors f$ is a compact object in $\WhiteheadCat X$, it descends to a class in K-theory $\lbrack \Tors f\rbrack\in\K_0(\WhiteheadCat X)\simeq\WhiteheadSpt{\type X}[1]$. In the next subsection we will show that this class recovers the classically defined Whitehead torsion of $f$, but we start by recalling how this class is constructed for the reader's convenience. We closely follow the exposition in~\cite{Lurie2014}.

With $f\colon Y\to X$ as above, we start by choosing a CW structure on $X$ together with a CW approximation of $f$; that is, a homotopy commutative triangle
\[
  \begin{tikzcd}
    Y\arrow[r,"f"]\arrow[d,"g"]
    & X\\
    Y'.\arrow[ru,swap,"f'"]
  \end{tikzcd}
\]
in which $Y'$ is a CW complex, $g$ is a weak homotopy equivalence, and $f'$ is a cellular map. We will assume that $X$ is connected and leave it to the reader to extend to the disconnected case. The map $f'$ lifts to a map $\tilde f'\colon \widetilde{Y'}\to \widetilde{X}$ of universal covers, which is unique up to a choice of basepoint. By the homotopy lifting property, there are unique cell structures on $\widetilde{Y'}$ and $\widetilde{X}$ such that $\tilde f'$ as well as the projections $\widetilde{X}\to X$ and $\widetilde{Y'}\to Y'$ are all cellular maps. The map $f'$ induces a map on cellular chains
\[
\tilde f'_*\colon\cchains{\widetilde Y'}\to\cchains{\widetilde X}.
\]
Note that both chain complexes admit an action of $\pi = \uppi_1X$ by Deck transformations, and that $\tilde f'_*$ is $\pi$-equivariant. We let $C$ denote (the usual model for) the mapping cone of $\tilde f'_*$; that is, $C$ has chains
\[
C^{\vphantom{\mathrm{cw}}}_i = \cchains{\widetilde{Y'}}[i-1]\oplus \cchains{\widetilde X}[i],
\]
and boundary map
\[
  d_i =
  \begin{pmatrix}
    -d'_{i-1} & 0 \\
    f'_* & d_i'',
  \end{pmatrix}
\]
where we have used $d'$ and $d''$ to denote the boundary maps of $\cchains{\widetilde Y'}$ and $\cchains{\widetilde{Y'}}$ respectively.

Since $\widetilde f'_*$ is a quasi-isomorphism and $C$ is a model for its mapping cone, we find that $C$ is acyclic. This gives a path from $C$ to $0\in\upOmega^\infty\K (\ZZ\pi )$.

The chain complex $C$ has a preferred basis as $\cchains{X}$ and $\cchains{ Y'}$ are defined to be free on the cells of $X$ and $Y'$ respectively, and there are canonical isomorphisms $\cchains{\widetilde X}\simeq\ZZ\pi\otimes\cchains{X}$ and $\cchains{\widetilde{Y'}}\simeq\ZZ\pi\otimes\cchains{Y'}$. This gives a path from $C$ to
\[
  \sum_{e\in \mathrm{cells}(X)}\susp^{\dim e}\ZZ\pi + \sum_{e'\in \mathrm{cells}(Y')} \susp^{\dim (e')-1}\ZZ\pi
\]
in $\desusp^\infty\K(\ZZ\pi )$.
Let
\begin{align*}
  E&=\lbrace
     e\in\mathrm{cells}( X)\mid \dim e\text{ is even}
     \rbrace
     \cup
     \lbrace
     e\in\mathrm{cells}({Y'})\mid \dim e\text{ is odd}
     \rbrace,\quad\text{and}\\
  E' &= \lbrace
  e\in\mathrm{cells}( X)\mid \dim e\text{ is odd}
  \rbrace
  \cup
  \lbrace
  e\in\mathrm{cells}({Y'})\mid \dim e\text{ is even}
    \rbrace.
\end{align*}
Since $ f'$ is a homotopy equivalence, we get that $\eulerchar ( X) = \eulerchar ({Y'})$. Since the Euler characteristic of a finite CW complex is equal to the number of even-dimensional cells minus the number of odd-dimensional cells, we thus get that $\# E = \# E'$. Pick a bijection $\phi$ between $E$ and $E'$.
Given $e\in E\cup E'$, we let
\[
  d(e) = \begin{cases}
    \dim e,&\text{if } e\in\mathrm{cells}(X),\\
    \dim e -1,&\text{if } e\in\mathrm{cells}(Y).
  \end{cases}
\]
Then $\ZZ\pi\lbrack d(e)\rbrack$ and $\ZZ\pi\lbrack d(\phi (e))\rbrack$ are isomorphic modules in degrees of complementary parity, we get a path from
\[
\susp^{d(e)}\ZZ\pi  + \susp^{d(\phi (e))}\ZZ\pi 
\]
to $0\in\desusp^\infty\K (\ZZ\pi )$. Summing over all these paths for $e$ varying over $E$, we get a path from $C$ to $0$. Joining this with the path we got from the acyclicity of $C$, we get a loop $\gamma\in\desusp^{\infty +1}\K(\ZZ\pi )$. This descends to a class $\lbrack\gamma\rbrack\in\K_1(\ZZ\pi )$. Now we invoke
\begin{fact}\label{fact:low-degs}
  For a connected finitely-dominated $\infty$-groupoid $K$, the base change map \[\A_i(K) = \K_i(\SS\lbrack\upOmega K\rbrack)\to \K_i(\ZZ\lbrack \uppi_1K\rbrack)\] is an isomorphism for $0\leq i\leq 1$.
\end{fact}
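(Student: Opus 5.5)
The plan is to factor the base change map through the connective cover, the truncation map $\SS[\upOmega K]\to\ZZ[\pi_1 K]$, and to control each step in low degrees. Write $R=\SS[\upOmega K]$. Since $K$ is connected, $R$ is a connective $E_1$-ring with $\pi_0R\cong\ZZ[\pi_1K]$. The base change map in question is induced by the map of connective ring spectra $R\to\tau_{\leq 0}R\simeq\ZZ[\pi_1K]$. Finite domination of $K$ is only needed so that $\A(K)$ is the K-theory of compact objects of $\Sp^{K}$ in the usual sense. The statement itself is a statement about connective K-theory of connective ring spectra, so I would prove it in that generality.

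The key input is the standard fact that a map $A\to B$ of connective $E_1$-rings which is $n$-connective ($n\ge 1$), i.e.\ whose cofiber is $n$-connective, induces an $(n+1)$-connective map on K-theory. Here $R\to\pi_0R$ is $1$-connective, since it is an isomorphism on $\pi_0$. Hence $\K(R)\to\K(\pi_0R)$ is $2$-connective, meaning it is an isomorphism on $\K_0$ and $\K_1$ and a surjection on $\K_2$. This gives the claim for $i=0,1$. For $\K_i$ with $i<0$ both sides vanish or agree by the same argument, but only $0\le i\le 1$ is required.

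To justify the connectivity estimate, I would proceed in either of two ways. The first option is Waldhausen's original argument via the plus construction: $\upOmega^\infty\K(A)\simeq\K_0(\pi_0A)\times B\mathrm{GL}(A)^+$. The monoid $\mathrm{GL}_n(A)$ is the union of components of $\upOmega^\infty M_n(A)$ lying over $\mathrm{GL}_n(\pi_0A)$. An $n$-connective map $A\to B$ therefore gives an $n$-connective map $\mathrm{GL}(A)\to\mathrm{GL}(B)$, and so an $(n+1)$-connective map $B\mathrm{GL}(A)\to B\mathrm{GL}(B)$. Plus construction preserves homology isomorphisms in this range, and hence connectivity by a Whitehead-type argument for simple spaces. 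On $\K_0$, idempotents and projective modules lift uniquely along $A\to\pi_0A$, since $\pi_0$ is unchanged. The second option is to invoke the Dundas--Goodwillie--McCarthy style statement directly, or the general result (e.g.\ Lurie's notes) that $\K$ sends $n$-connective maps to $(n+1)$-connective maps.

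I expect the main obstacle to be the $\K_1$ statement: showing injectivity of $\K_1(R)\to\K_1(\ZZ\pi)$. Surjectivity is easy, since matrices over $\ZZ\pi$ lift to $\pi_0$ of $M_n(R)$. For injectivity, the plus construction argument needs $\pi_1 B\mathrm{GL}(R)\to\pi_1 B\mathrm{GL}(\ZZ\pi)$ to be an isomorphism and $\pi_2$ to be a surjection before abelianization. That holds because $\pi_0\mathrm{GL}(R)=\mathrm{GL}(\ZZ\pi)$, and it suffices to conclude $\K_1(R)\cong\mathrm{GL}(\ZZ\pi)^{\mathrm{ab}}$. I would first try this direct identification, which is the classical route.
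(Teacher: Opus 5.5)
Your proposal is correct and takes the same approach as the paper: the paper treats this as the special case $R=\SS\lbrack\upOmega K\rbrack$ of the general fact that $\K_i(R)\to\K_i(\uppi_0R)$ is an isomorphism for $i=0,1$ when $R$ is a connective associative ring spectrum. It cites Corollaries 3 and 4 of Lecture 20 in Lurie's notes, which is exactly the connectivity estimate you sketch, together with the direct identification $\K_1(R)\cong\mathrm{GL}(\uppi_0R)^{\mathrm{ab}}$. Two harmless slips: for your numerology ($R\to\uppi_0R$ being $1$-connective, and a $2$-connective map on K-theory being an isomorphism on $\K_0$ and $\K_1$), ``$n$-connective'' must mean that the \emph{fiber}, not the cofiber, is $n$-connective; and finite domination plays no role here, since $\Sp^K\simeq\mathrm{LMod}_{\SS\lbrack\upOmega K\rbrack}$ for any connected $K$.
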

This is a special case of the well-known fact that if $R$ is a connective associate ring spectrum, then $\K_i(R)\to\K_i(\uppi_0R)$ is an isomorphism for $i=0,1$ (see Corollaries 3 and 4 in Lecture 20 of~\cite{Lurie2014}).

It follows from the above that we get a unique lift $\lbrack \tilde\gamma\rbrack\in\K_1(\SS\lbrack\upOmega K\rbrack )$ of the class $\lbrack\gamma\rbrack$, where $K = \type X$.
\begin{defn}
  The \tdef{Whitehead torsion} of $f$, denoted $\mdef{\uptau (f)}$, is the image of the class $\lbrack \tilde\gamma\rbrack\in\K_1(\SS\lbrack\upOmega K\rbrack )$ under the map
  \[
    \K_1(\SS\lbrack\upOmega K\rbrack ) \to\WhiteheadSpt{K}[1].
  \]
\end{defn}
\subsection{The K-theory class of the torsion cosheaf}\label{subsec: K-theory class}
The goal of this subsection is to prove
\begin{thm}\label{thm:torsion-is-torsion}
  If $f\colon Y\to X$ is a simplicial map between compact triangulated spaces,
  then the class $\lbrack\Tors f\rbrack\in \K_0(\WhiteheadCat X)$ is equal to the Whitehead torsion of $f$ under the canonical identification $\K_0(\WhiteheadCat X)\simeq\WhiteheadSpt{\type X}[1]$.
\end{thm}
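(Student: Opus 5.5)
The plan is to compute $\lbrack\Tors f\rbrack$ through the Verdier sequence $\WhiteheadCat X\to\ccoShv X\xrightarrow{\BNass}\Sp^{K}$, $K=\type X$, supplied by \cref{thm: epimorphism} and \cref{prop:triangulation gives decent cover}. The key is a description of $\K_0(\WhiteheadCat X)$ in terms of loops in $\upOmega^\infty\K(\Sp^K)$.

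\textbf{Step 1: a general lemma for Verdier sequences.} Suppose $T=\cofiber(a\colon A\to B)$ with $A,B$ compact in $\ccoShv X$ and $\BNass(a)$ an equivalence. Choose paths $\alpha$ from $[A]$ and $\beta$ from $[B]$ to a common point $c$ in $\upOmega^\infty\K(\ccoShv X)$. Concatenating $\BNass(\alpha)$, the path induced by the equivalence $\BNass(a)$, and $\BNass(\beta)^{-1}$ gives a loop in $\upOmega^\infty\A(K)$. The lemma would say that its class in $\A_1(K)$ maps to $[T]$ under
\[
\A_1(K)\to\pi_0\mathrm{fib}(\A(\pt)\otimes K\to\A(K))[1]\simeq\WhiteheadSpt{K}[1]\simeq\K_0(\WhiteheadCat X).
\]
This should be a formal consequence of how the boundary map of a fiber sequence is defined, together with the fact that $\K$ sends the Verdier sequence to a fiber sequence. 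Changing $\alpha,\beta$ changes the loop by an element of the image of $\K_1(\ccoShv X)=\A_1(\pt)\otimes K$, which dies in $\WhiteheadSpt K[1]$.

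\textbf{Step 2: cellular filtrations of Wall objects.} Apply the lemma with $A=f_?\WallObject Y$ and $B=\WallObject X$, using \cref{lem:induced-map-on-parametrized-spectra} to identify $\BNass(a)$ with the counit $f_\sharp\SS_{\type Y}\to\SS_K$. For the paths, use the triangulations. The skeletal filtration of $X$ filters the cosheaf $U\mapsto\SS[\type U]$, with associated graded pieces
\[
\bigoplus_{\sigma}\susp^{\dim\sigma}c_\sigma ,
\]
where $c_\sigma$ is the compact object associated with a point of the open simplex $\sigma$; this is a skyscraper-type cosheaf, and $\BNass(c_\sigma)=(x_\sigma)_\sharp\SS$. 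Similarly $f_?\WallObject Y$ is filtered with pieces $\susp^{\dim\tau}f_?c_\tau$, and $f_?c_\tau$ is identified with $c_{\sigma}$ for the simplex $\sigma\supseteq f(\tau)$, up to paths. These filtrations give paths from $[B]$ and $[A]$ to signed sums of classes of points. Since all points in a connected $X$ have paths between them, and the counts match by $\eulerchar(X)=\eulerchar(Y)$, both sums can be moved to a common $c$. This uses the bijection $\phi$ of \cref{subsec: recollection on whitehead}, and changing $\phi$ only alters the loop by classes from $\A(\pt)\otimes K$.

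\textbf{Step 3: comparison with the classical torsion.} Apply $\BNass$ and base change along $\A(K)\to\K(\ZZ\pi)$, which is an isomorphism on $\pi_1$ by \cref{fact:low-degs}. After linearization, the skeletal filtration of $\SS_K$ becomes the cellular chain complex $\cchains{\widetilde X}$ with its cell basis, each $(x_\sigma)_\sharp\SS$ becoming a free rank-one $\ZZ\pi$-module. Likewise $f_\sharp\SS_{\type Y}$ gives $\cchains{\widetilde Y}$, and the counit becomes $\tilde f_*$. The loop from Step 1 then becomes exactly the loop $\gamma$: the path through acyclicity of the mapping cone $C$, followed by the cancellation paths for the preferred basis.

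\textbf{Main obstacle.} The hard step is making Step 3 honest: identifying the linearized skeletal filtration of $\BNass(\WallObject X)$ with the cellular chain complex \emph{including basis and path data}, not just up to quasi-isomorphism. I would first establish this for a single simplex, where $X$ is contractible and $\BNass$ is the explicit split epimorphism from the proof of \cref{thm: epimorphism}. I would then glue using naturality of $\BNass$ in the inclusions of closed stars, together with \cref{thm: codescent}.
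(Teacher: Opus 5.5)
Your architecture matches the paper's: filter the Wall objects cell by cell, identify the images of the graded pieces under $\BNass$ with shifted free rank-one modules, cancel via $\phi$, and discard choices because they only move the loop by the image of $\pi_1(\A(\pt)\otimes K)$. Your Step~1 is correct, and it gives a cleaner account of that last point than the paper does. Step~2, however, contains a false identification. The degree-$k$ graded piece of the skeletal filtration of $\WallObject X$ is the cosheaf $U\mapsto\cofiber(\SS\lbrack\type{U\cap X^{k-1}}\rbrack\to\SS\lbrack\type{U\cap X^{k}}\rbrack)$. Its global sections are $\bigoplus_\sigma\susp^{k}\SS$, but on a small ball around \emph{any} interior point of a $k$-simplex it is $\SS$, unshifted. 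It also vanishes on small neighbourhoods of points of $X^{k-1}$. Already for $X=[0,1]$, its values on $(0.4,0.6)$ and on $(0.1,0.2)$ are both $\SS$, whereas $\susp c_{1/2}$ gives $\susp\SS$ and $0$. So it is not $\bigoplus_\sigma\susp^{\dim\sigma}c_\sigma$; the two agree only after applying $\BNass$, or as classes in $\K_0$. Because of Step~1 you may replace the claimed equivalence by some path in $\upOmega^\infty\K(\ccoShv X)$. But then you need to know what that path becomes after $\BNass$ and linearization, which is exactly what you postponed to Step~3.

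Step~3 is the theorem itself. Step~1 lets you compute $[\Tors f]$ from \emph{any} upstairs paths. What must be shown is that the classical paths in $\K(\ZZ\pi)$ — the filtration of $\cchains{\widetilde X}$ together with the cell basis — are, modulo the image of assembly, images of paths in $\K(\ccoShv X)$. Your plan (treat a simplex, then glue with \cref{thm: codescent}) aims at the wrong place. On a simplex $\pi$ is trivial, so the $\pm g$ basis data you need to control never appears. Codescent is a statement about the categories $\ccoShv{-}$, not about filtrations and bases in $\K(\ZZ\pi)$.

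The paper needs no gluing upstairs. Applying \cref{lem:induced-map-on-parametrized-spectra} to the inclusions $X_l\hookrightarrow X$ of a cell-by-cell filtration shows that $\BNass$ carries the filtration of $\WallObject X$ to the filtration $(\iota_l)_\sharp\SS_{\type{X_l}}$ of $\SS_{\type X}$. The Mayer--Vietoris square for the cell-attachment pushout of anima then identifies each graded piece with $x_\sharp\SS\simeq\SS\lbrack\upOmega\type X\rbrack$, shifted by the dimension of the cell. This identification comes from the attaching data, so after $\ZZ\pi\otimes_{\SS\lbrack\upOmega\type X\rbrack}({-})$ it is the cellular filtration of $\cchains{\widetilde X}$, with each cell generator determined up to $\pm g$. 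Your Step~1 absorbs that ambiguity. The cancellation paths are then lifted to $\K(\ccoShv X)$; the paper invokes \cref{fact:monomorphism} for this.
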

Our proof is modelled on the proof of a similar statement in Lecture 27 of~\cite{Lurie2014}. As in the latter proof, we will need the following fact about the Waldhausen assembly map, which follows from~\cref{fact:low-degs}:
\begin{fact}\label{fact:monomorphism}
  For a connected finitely-dominated $\infty$-groupoid $K$, the mapping
  \[
    \uppi_0(\A (\pt )\otimes K)\to\uppi_0\A (K) = \A_0(K)
  \]
  is a monomorphism.
\end{fact}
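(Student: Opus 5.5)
The plan is to prove injectivity by retracting onto the case of a point, using naturality of the assembly map in $K$, so that no computation of $\A_0(K)$ is needed. Let $p\colon K\to\pt$ be the projection. Waldhausen's assembly map $\A(\pt)\otimes K\to\A(K)$ is natural in the anima $K$, since it is the comparison map $\colim_K\A(\pt)\to\A(\colim_K\pt)$. Naturality for $p$ then gives a commutative square
\[
  \begin{tikzcd}
    \A (\pt )\otimes K\arrow[r]\arrow[d,"\id\otimes p"]
    & \A (K)\arrow[d,"\A(p)"]\\
    \A (\pt )\otimes\pt\arrow[r,"\sim"]
    & \A (\pt ),
  \end{tikzcd}
\]
whose bottom arrow is an equivalence because assembly for a point is the identity.

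Apply $\uppi_0$ to this square. It suffices to show that the left vertical map
\[
  \uppi_0(\A(\pt)\otimes K)\to\uppi_0\A(\pt)
\]
is an isomorphism. Granting this, the composite of the map in question with $\uppi_0\A(p)$ is an isomorphism, so the map in question is (split) injective.

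For the left vertical map, note that $\A(\pt)=\K(\SS)$ is connective and $K$ is connected. Consider the cofiber sequence $\suspinf{}\pt\to\suspinf{}K_+\to\suspinf{}K$ of spectra obtained by choosing a basepoint; its second map splits the projection $\suspinf{}K_+\to\suspinf{}\pt$. Tensoring with $\A(\pt)$ shows that $\A(\pt)\otimes K\simeq\A(\pt)\oplus(\A(\pt)\otimes\suspinf{}K)$. Since $\suspinf{}K$ is $1$-connective ($K$ being connected) and $\A(\pt)$ is connective, the second summand is $1$-connective and contributes nothing to $\uppi_0$. Hence $\uppi_0(\A(\pt)\otimes K)\cong\uppi_0\A(\pt)\cong\ZZ$ via $p$, which is what we need. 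Equivalently, by the Atiyah--Hirzebruch spectral sequence, $\uppi_0(\A(\pt)\otimes K)\cong\mathrm H_0(K;\uppi_0\A(\pt))\cong\ZZ$.

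The only point requiring care is the naturality of the assembly map with respect to $p$, in particular that the assembly map for the point is the identity. This is formal from its definition as a colimit comparison map. Finite domination of $K$ is not actually needed for this argument. If one prefers to connect with~\cref{fact:low-degs}, the retraction identifies concretely with the rank map $\K_0(\ZZ[\uppi_1K])\to\K_0(\ZZ)=\ZZ$, which sends $[\ZZ[\uppi_1K]]$ to $1$.
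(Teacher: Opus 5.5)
Your proof is correct, and it takes a more direct route than the paper. The paper gives no separate argument; it only says that the statement follows from \cref{fact:low-degs}. Spelled out, that route identifies $\A_0(K)$ with $\K_0(\ZZ[\uppi_1K])$ and notes that $\uppi_0(\A(\pt)\otimes K)\cong\ZZ$ is sent onto the subgroup generated by the class of the free module $\ZZ[\uppi_1K]$. It then still has to check that this class has infinite order, which is in effect the rank map $\K_0(\ZZ[\uppi_1K])\to\K_0(\ZZ)$ induced by the augmentation.

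You apply the same retraction one level up, on $\A$ itself, via naturality of assembly along $p\colon K\to\pt$; here $\A(p)$ is induced by the augmentation $\SS[\upOmega K]\to\SS$. You pair this with the connectivity computation $\uppi_0(\A(\pt)\otimes K)\cong\uppi_0\A(\pt)$, which the paper leaves implicit. This avoids the comparison $\K_i(R)\cong\K_i(\uppi_0R)$ for $i=0,1$ altogether. It also makes clear that finite domination is irrelevant; the paper's route does not really use it either, since \cref{fact:low-degs} only needs connectivity of $\SS[\upOmega K]$.

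What the paper's route buys is a direct tie to the classical description of $\A_0(K)$ and $\A_1(K)$ through $\ZZ[\uppi_1K]$, which is the language in which the comparison with Whitehead torsion is carried out. Your closing remark recovers that link anyway.
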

We will also need the following lemma, which we learned from~\cite{Cnossen2023} (see the proof of Corollary 3.42 there):
\begin{lem}[Mayer--Vietoris]
  If
  \begin{equation}
    \label{eq:pushout-of-anima}
    \begin{tikzcd}
      K\arrow[r,"f"]\arrow[d,"g"]
      & L\arrow[d,"h"]\\
      K'\arrow[r,"f'"]
      & L'
    \end{tikzcd}
  \end{equation}
  is a pushout square in the category of anima, then the induced square
  \[
    \begin{tikzcd}
      (hf)_\sharp\SS_K\arrow[r]\arrow[d]
      & h_\sharp\SS_L\arrow[d]\\
      f_\sharp'\SS_{K'}\arrow[r]
      & \SS_{L'}
    \end{tikzcd}
  \]
  is also a pushout in $\Sp^{L'}$
\end{lem}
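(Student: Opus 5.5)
The plan is to reduce to the universal case and then transport along a change-of-base equivalence. The lemma says that for a pushout of anima the square of parametrized spectra obtained by applying $(-)_\sharp$ to the constant sphere spectra is a pushout in $\Sp^{L'}$. Since $\Sp^{(-)}$ with $f_\sharp$ is a colimit-preserving functor from anima to $\Prlst$, it suffices to identify the square as the image of a universal square.

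\textbf{Step 1: rewrite each corner.} For a map $u\colon M\to L'$ I would use straightening/unstraightening. With this, $u_\sharp\SS_M$ is the functor $L'\to\Sp$ sending $x\mapsto\suspinf(M\times_{L'}\{x\})_+$, the fiberwise suspension spectrum of the fibers. Equivalently, $u_\sharp\SS_M\simeq\suspinf_{L'}$ applied to the object $M\to L'$ of $\mathcal S_{/L'}\simeq\Fun(L',\mathcal S)$, after adjoining a disjoint basepoint. Indeed, the composite
\[
\mathcal S_{/L'}\xrightarrow{(-)_+}\Fun(L',\mathcal S_*)\xrightarrow{\suspinf}\Fun(L',\Sp)
\]
sends $u$ to $u_\sharp u^*\SS_{L'}=u_\sharp\SS_M$, since $u_\sharp$ is left Kan extension and the identity of $M$ is sent to $\SS_M$. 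The naturality maps in the square are the counits $u_\sharp u^*\to\id$ composed appropriately, and these correspond under this identification to the maps of spaces over $L'$.

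\textbf{Step 2: check the pushout over $L'$.} The functor $\mathcal S_{/L'}\to\Sp^{L'}$ just described preserves colimits, because $(-)_+$ and pointwise $\suspinf$ preserve pushouts. So it suffices to show that the square $K\to L$, $K\to K'$, $L\to L'$, $K'\to L'$, viewed in $\mathcal S_{/L'}$, is a pushout there. The forgetful functor $\mathcal S_{/L'}\to\mathcal S$ creates colimits, and the square is a pushout in $\mathcal S$ by hypothesis, so this holds. Note that the terminal corner $L'\xrightarrow{\id}L'$ maps to $\SS_{L'}$.

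\textbf{Step 3: coherence.} The main obstacle is a bookkeeping check. I must confirm that the maps in the stated square, built from Beck--Chevalley/counit maps such as $(hf)_\sharp\SS_K\simeq h_\sharp f_\sharp f^*\SS_L\to h_\sharp\SS_L$, agree with the images of the maps over $L'$ under the functor above. I would handle this by noting that $u\mapsto u_\sharp u^*\SS_{L'}$ is the functor $\mathcal S_{/L'}\to\Sp^{L'}$ obtained as the left Kan extension of $\SS_{L'}$ along the Yoneda-type embedding. The counit maps are then exactly its functoriality, so the square commutes canonically and is the image of the pushout square.

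Alternatively, one can check the claim pointwise. Colimits in $\Sp^{L'}$ are computed fiberwise, and for $x\in L'$ the fibers of $K,L,K'$ over $x$ again form a pushout, because pullback along $\{x\}\to L'$ preserves colimits in $\mathcal S$ by universality of colimits. Applying $\suspinf(-)_+$ then gives a pushout of spectra.
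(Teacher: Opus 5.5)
Your proof is correct, but it takes a different route from the paper's.

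\textbf{The paper's route.} The paper works with mapping spaces. By Yoneda it tests against an arbitrary $E\in\Sp^{L'}$. The adjunctions $u_\sharp\dashv u^*$ then rewrite $\mathrm{Map}(u_\sharp\SS_M,E)$ as $\mathrm{Map}(u^*\SS_{L'},u^*E)$. It concludes because $\Sp^{(-)}$ sends the pushout of anima to a pullback of categories $\Sp^{L'}\simeq\Sp^{K'}\times_{\Sp^{K}}\Sp^{L}$, and mapping spaces in such a pullback are pullbacks of mapping spaces.

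\textbf{Your route.} You recognize $(u\colon M\to L')\mapsto u_\sharp\SS_M$ as the colimit-preserving functor $\Sigma^\infty_+$ on $\mathcal S_{/L'}\simeq\mathrm{Fun}(L',\mathcal S)$. You then use that colimits in the slice are created in $\mathcal S$; in your fiberwise variant, you use universality of colimits in $\mathcal S$ instead.

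\textbf{What each buys.} The paper's argument is purely formal: it uses only the adjunctions and descent for $\Sp^{(-)}$. It therefore applies verbatim to other coefficient categories, or to $u_\sharp u^*E_0$ for any $E_0\in\Sp^{L'}$. Yours is more concrete, since it exhibits the corners as fiberwise suspension spectra. It also proves more: the assignment preserves all colimits in $\mathcal S_{/L'}$, not only this pushout.

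\textbf{Step~3.} The compatibility you isolate there does hold. Under straightening, a map $v\colon M\to N$ over $L'$ with $u=wv$ corresponds to $w_!$ applied to the counit $v_!v^*(\ast)\to\ast$. Moreover, $\Sigma^\infty_+$ intertwines this counit with the counit of $v_\sharp\dashv v^*$. The paper's ``adjoining over'' step needs the same kind of naturality check; it just leaves it implicit.

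\textbf{Your opening sentence.} The remark that $\Sp^{(-)}$ with $(-)_\sharp$ preserves colimits of presentable categories is never used in your argument. It is essentially the descent statement on which the paper's proof rests.
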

\begin{proof}
  By the Yoneda lemma, we must show that
  \[
    \begin{tikzcd}
      \Map (\SS_{L'},E)\arrow[r]
      \arrow[d] & \Map (f_\sharp'\SS_{K'},E)\arrow[d]\\
      \Map (h_\sharp\SS_L,E)\arrow[r]
      & \Map ((hf)_\sharp \SS_K,E)
    \end{tikzcd}
  \]
  is a pullback in the category of anima for every $E\in\Sp^{L'}$. But adjoining over identifies this diagram with
  \begin{equation}
    \label{eq:pullback-of-mapping-spaces}
    \begin{tikzcd}
      \Map (\SS_{L'},E)\arrow[r]
      \arrow[d] & \Map ((f')^*\SS_{L'},(f')^*E)\arrow[d]\\
      \Map (h^*\SS_{L'},h^*E)\arrow[r]
      & \Map ((hf)^* \SS_{L'},(hf)^*E).
    \end{tikzcd}
  \end{equation}
  Note that since~\eqref{eq:pushout-of-anima} is a pushout it induces a pullback on parametrized spectra. The fact that~\eqref{eq:pullback-of-mapping-spaces} is a pullback then follows from the usual description of mapping spaces in pullbacks of categories.
\end{proof}
\begin{proof}[Proof of~\cref{thm:torsion-is-torsion}]
  We may assume that $X$ and $Y$ are both connected, and that the $0$-skeleta of both complexes are singletons. Let $x$ denote the unique $0$-cell of $X$. Let $i_k\colon X^k\hookrightarrow X$ and $i_k'\colon X^k\hookrightarrow X^{k+1}$ denote the inclusion maps.

  The skeletal filtration of $X$ gives rise to a $\lbrack 0,\dim X\rbrack$-indexed filtration
  \begin{equation}
    \label{eq:X-skeletal-filtration}
    \mathscr F\colon (i_0)_?\WallObject{X^0}\to
    (i_1)_?\WallObject{X^1}\to\cdots
    \to \WallObject X,
  \end{equation}
  of $ \WallObject X$. It will be convenient to refine this filtration further by picking a total ordering of the $k$-cells for each $k$, which we combine into a total ordering of all the cells of $X$
  \[
    e_1 < e_2 < \cdots < e_n
  \]
  by imposing $e < e'$ if $\dim e < \dim (e')$. We put $X_l = e_1\cup e_2\cup\cdots\cup e_l$, and let $\iota_l\colon X_l\hookrightarrow X$ and $\iota_l'\colon X_l\hookrightarrow X_{l+1}$ denote the inclusions. We then have a resulting filtration
  \begin{equation}
    \label{eq:X-skeletal-filtration}
    \mathscr G\colon (\iota_0)_?\WallObject{X_0}\to
    (i_1)_?\WallObject{X_1}\to\cdots
    \to \WallObject X,
  \end{equation}
For each graded piece
  \[
    \grpiece^l_{\mathscr G}\WallObject{X}
    \simeq \cofiber ((\iota_{l-1})_?\WallObject{X_{l-1}}\to (\iota_{l})_?\WallObject{X_{l}})
  \]
  of the resulting filtration, \cref{lem:induced-map-on-parametrized-spectra} supplies an identification
  \[
    \grpiece^l_{\mathscr G}\WallObject{X}
    \simeq (\iota_{l})_\sharp \cofiber ((\iota_{l-1}')_\sharp\SS_{X_{l-1}}\to\SS_{X_l}).
  \]
  We also have an associated cell attachment pushout
  \[
    \begin{tikzcd}
      S^k\arrow[r,"a"]\arrow[d,"p"]
      & \type{X_{l-1}}\arrow[d,"\iota_{l-1}'"]\\
      \pt\arrow[r,"x"]
      & \type{X_l}
    \end{tikzcd}
  \]
  in the category of anima, and thus get a Mayer--Vietoris pushout square
  \[
    \begin{tikzcd}
      (xp)_\sharp\SS_{S^k} \arrow[r]\arrow[d]
      & (\iota_{l-1}')_\sharp\SS_{\type{X_{l-1}}} \arrow[d]\\
      x_\sharp\SS\arrow[r] & \SS_{\type{X_{l}}}
    \end{tikzcd}
  \]
  Thus we get an equivalence between $\cofiber ((\iota_{l-1}')_\sharp\SS_{\type{X_{l-1}}}\to \SS_{\type{X_{l}}} )$ with the cofiber of $x_\sharp (p_\sharp\SS_{S^k}\to\SS )$, and $p_\sharp\SS_{S^k}\to\SS$ is simply the map $\SS\lbrack S^k\rbrack\to\SS$ induced by the projection $S^k\to\pt$. The latter is canonically equivalent to $\susp^{k+1}\SS$, so we get an equivalence
  \[
    \cofiber ((\iota_{l-1}')_\sharp\SS_{\type{X_{l-1}}}\to \SS_{\type{X_{l}}} )\simeq x_\sharp\susp^{k+1}\SS\simeq \susp^{k+1}\SS\lbrack\desusp\type{X_{l}}\rbrack.
  \]
 Applying the base change morphism $(\iota_{k})_\sharp = \SS\lbrack\desusp \type X\rbrack\otimes_{\SS\lbrack\desusp \type{X_{l}}\rbrack}({-})$, we get
  \[
    \BNass (\grpiece_{\mathscr G}^k\WallObject X)\simeq
    \SS\lbrack\desusp \type X\rbrack\otimes_{\SS\lbrack\desusp \type{X_{l}}\rbrack}\susp^{k+1}\SS\lbrack\desusp\type{X_{l}}\rbrack
    \simeq\susp^{k+1}\SS\lbrack\desusp{\type X}\rbrack.
  \]
  Summarizing what we have done so far, we have a path from $\WallObject X$ to $\sum_{e\in\mathrm{cells}(X)}F_e$ in $\desusp^\infty\K (\ccoShv X)$, where $F_e = \grpiece_{\mathscr G}^l\WallObject X$ for the unique index $l$ such that $e = e_l$, and we have seen that $\BNass (F_e)\simeq\susp^{\dim e}\SS\lbrack\desusp{\type X}\rbrack\in\Sp^{\type X}$ for each $e$. The same argument gives a path from $f_?\WallObject X$ to $\sum_{e\in\mathrm{cells}(Y)}G_e$, where $\BNass (G_e) = \susp^{\dim e}\SS\lbrack\desusp{\type X}\rbrack$. We define $E$ and $E'$ as in~\cref{subsec: recollection on whitehead}, and pick a bijection $\phi$ between $E$ and $E'$. As in that section, we find a path from
  \[
    \ZZ\lbrack\pi_1X\rbrack\otimes_{\SS\lbrack\desusp\type X\rbrack}(\BNass (F_e) + \desusp \BNass (G_e))
  \]
  to $0\in\desusp^\infty\K (\ZZ\lbrack\pi_1X\rbrack )$, and hence also from $\BNass (F_e) + \desusp \BNass (G_e)$ to $0$ in $\desusp^\infty\K (\Sp^{\type X} )$. But then by~\cref{fact:monomorphism}, this lifts to a path from $F_e + \desusp G_e$ to $0\in\desusp^\infty\K (\ccoShv X)$. Summing all of these together, we get a path from $\Tors f$ to $0\in\desusp^\infty\K (\ccoShv X)$ that lifts the one described in~\cref{subsec: recollection on whitehead}. But then the connecting homomorphism sends the $0$-based loop $\lbrack\tilde\gamma\rbrack\in\uppi_1\K (\Sp^{\type X})$ to the endpoint of its ($0$-based) lift in $\desusp^\infty\K(\ccoShv X )$, namely $\Tors f$.
\end{proof}
\subsection{Basic properties of the torsion cosheaf}
The assignment $f\mapsto\uptau (f)$ satisfies a number of rules that facilitate the calculation of $\uptau (f)$, see for instance the list in~\cite[p.~21, Thm~2.1]{Luck2002}. Aside from homotopy invariance, these rules are also satisfied by the torsion cosheaf $\Tors f$. In fact, it is easier to show these rules for the torsion cosheaf than using the classical construction of Whitehead torsion.
\begin{prop}[Composition formula]
  Let $Z\xrightarrow gY\xrightarrow fX$ be a sequence of maps between compact ANRs. There is a fiber sequence
  \[
    f_?\Tors g\to \Tors{fg}\to\Tors f.
  \]
  In particular, if $f$ and $g$ are both homotopy equivalences, we have $\lbrack \Tors{fg}\rbrack = f_*\lbrack \Tors g\rbrack + \lbrack\Tors f\rbrack\in\K_0 (\WhiteheadCat X)$.
\end{prop}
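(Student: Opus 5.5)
The plan is to apply the octahedral axiom in the stable category $\ccoShv X$ to the composable pair of maps
\[
  (fg)_?\WallObject Z\simeq f_?g_?\WallObject Z\longrightarrow f_?\WallObject Y\longrightarrow \WallObject X.
\]
Here the first map is $f_?$ applied to the map $g_?\WallObject Z\to\WallObject Y$ from \cref{cons:torsion-cosheaf} for $g$. The second map is the map of \cref{cons:torsion-cosheaf} for $f$.

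Since $f_?$ is exact (indeed strongly continuous), the cofiber of the first map is $f_?\Tors g$. The cofiber of the second map is $\Tors f$ by definition. The octahedral axiom then gives a cofiber sequence
\[
  f_?\Tors g\to\cofiber\big(f_?g_?\WallObject Z\to\WallObject X\big)\to\Tors f.
\]
To identify the middle term with $\Tors{fg}$, I must check that the composite of the two maps agrees with the map $(fg)_?\WallObject Z\to\WallObject X$ of \cref{cons:torsion-cosheaf} for $fg$, under the equivalence $(fg)_?\simeq f_?g_?$.

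This compatibility is the main obstacle, though I expect it to be a formal verification. I would do it on the level of the associated strongly continuous functors $\Shv X\to\Sp$. There the map for $f$ is the unit–functoriality–counit composite $Y_\sharp f^*\to X_\sharp$. Precomposing the map for $g$ with $f^*$ gives $Z_\sharp g^*f^*\to Y_\sharp f^*$. Composing the two is again a composite of units and counits, and should reduce to the Beck--Chevalley-type map $Z_\sharp (fg)^*\to X_\sharp$. Concretely, the map of \cref{cons:torsion-cosheaf} is the mate of the identification $(fg)^*X^*\simeq Z^*$, and mates compose: the mate of a pasted square is the pasting of the mates. Since $Z^*\simeq g^*Y^*\simeq g^*f^*X^*$ is exactly the pasting of the squares for $g$ and $f$, the composite is the map for $fg$. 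Equivalently, on cosheaves we are just composing $\suspinf{\type{(fg)^{-1}U}}\to\suspinf{\type{f^{-1}U}}\to\suspinf{\type U}$, which is functoriality of $\type{-}$.

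For the K-theory statement, assume $f$ and $g$ are homotopy equivalences. By \cref{thm:torsion-belongs-to-whitehead}, all three objects $f_?\Tors g$, $\Tors{fg}$ and $\Tors f$ are compact objects of $\WhiteheadCat X$. For $f_?\Tors g$, use that $f_?$ restricts to the functor $\WhiteheadCat Y\to\WhiteheadCat X$ by naturality of $\BNass$. Additivity of K-theory on cofiber sequences of compact objects then gives
\[
  \lbrack\Tors{fg}\rbrack=\lbrack f_?\Tors g\rbrack+\lbrack\Tors f\rbrack.
\]
Finally, $\lbrack f_?\Tors g\rbrack=f_*\lbrack\Tors g\rbrack$ by the definition of the functoriality of $\K_0(\WhiteheadCat{-})$.
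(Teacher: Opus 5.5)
Your proposal is correct and follows the paper's proof: the paper likewise factors $(fg)_?\WallObject Z\to\WallObject X$ as $f_?$ applied to the map for $g$ followed by the map for $f$ (justified by ``units and counits compose'', which is your mates-compose argument) and then applies the third isomorphism theorem, i.e.\ the octahedral axiom. Your derivation of the $\K_0$ identity from \cref{thm:torsion-belongs-to-whitehead}, the naturality of $\BNass$, and additivity spells out a step the paper leaves implicit.
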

\begin{proof}
  Since units and counits compose, we find that the map $(fg)_?\WallObject Z\to\WallObject X$ factors as the composition of $f_?(g_?\WallObject Z\to\WallObject Y)$ and $f_?\WallObject Y\to\WallObject X$. The claim now follows from the third isomorphism theorem.
\end{proof}
\begin{rmk}
  Although the cosheaf $\Tors f$ is not itself homotopy invariant, the composition formula lets us explain the homotopy invariance of the Whitehead torsion $\uptau (f) = \lbrack\Tors f\rbrack$ in a way that is native to the Whitehead category. Namely, let $h\colon Y\times I\to X$ be a homotopy with $h\vert_{Y\times 0} = f$ and $h\vert_{Y\times 1} = g$. The composition formula gives fiber sequences
  \[
    h_?\Tors{i_0}\to\Tors{f}\to\Tors{h}
    \quad\text{and}\quad
    h_?\Tors{i_1}\to\Tors{g}\to\Tors{h},
  \]
  where $i_k\colon Y\times k\hookrightarrow Y\times I$ denotes the inclusion for $k=0,1$. The homotopy invariance of $\uptau (f)$ is therefore equivalent to the statement that $\Tors{i_0}$ vanishes in K-theory,\footnote{The statement that $\Tors{i_1}$ follows from this by flipping the interval.} which via Whitehead's theorem is the statement that $i_0$ is a simple homotopy equivalence.
\end{rmk}
\begin{prop}[Sum formula]\label{prop:sum-formula}
  Let $f\colon Y\to X$ be a map between compact ANRs, and let $X = X_0\cap X_1$ be a closed cover of $X$. Suppose that $X_0$, $X_1$, $X_{01} \coloneq X_0\cap X_1$, $Y_0\coloneq f^{-1}(X_0)$, $Y_1\coloneq f^{-1}(X_1)$, and $Y_{01}\coloneq f^{-1}(X_{01})$ are all ANRs.\footnote{This holds for instance if they are neighborhood retracts in their ambient ANRs.}

  Let $f_0\colon Y_0\to X_0$, $f_1\colon Y_1\to X_1$, and $f_{01}\colon Y_{01}\to X_{01}$ denote the restrictions of $f$, and let $i_0\colon X_0\hookrightarrow X$, $i_1\colon X_1\hookrightarrow X$, and $i_{01}\colon X_{01}\hookrightarrow X$ denote the inclusions.
  There is a pushout square
  \[
    \begin{tikzcd}
      (i_{01})_?\Tors{f_{01}}\arrow[r]\arrow[d]
      & (i_1)_?\Tors{f_1}\arrow[d] \\
      (i_0)_?\Tors{f_0}\arrow[r]
      & \Tors f.
    \end{tikzcd}
  \]
  In particular,if $f_0$, $f_1$, and $f_{01}$ are all homotopy equivalences, then  we have $\lbrack\Tors f\rbrack = (i_0)_*\lbrack\Tors{f_0}\rbrack + (i_1)_*\lbrack\Tors{f_1}\rbrack - (i_{01})_*\lbrack\Tors{f_{01}}\rbrack\in\K_0(\WhiteheadCat X)$.
\end{prop}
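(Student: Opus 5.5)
The plan is to prove the sum formula by producing a pushout square of local Wall objects for $X$ and for $Y$ (pushed forward to $X$), and then taking cofibers; cofibers commute with pushouts in the stable category $\ccoShv X$. Concretely, I would consider the commutative cube in $\ccoShv X$ whose back face is
\[
  (f i'_{01})_?\WallObject{Y_{01}}\to (f i'_1)_?\WallObject{Y_1},\quad (f i'_0)_?\WallObject{Y_0}\to f_?\WallObject Y,
\]
where $i'_\bullet$ denote the inclusions $Y_\bullet\hookrightarrow Y$. Its front face is the analogous square with corners $(i_{01})_?\WallObject{X_{01}}$, $(i_1)_?\WallObject{X_1}$, $(i_0)_?\WallObject{X_0}$ and $\WallObject X$. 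The maps between the faces are $(i_\bullet)_?$ applied to the maps $(f_\bullet)_?\WallObject{Y_\bullet}\to\WallObject{X_\bullet}$ of~\cref{cons:torsion-cosheaf}, using $i_\bullet f_\bullet = f i'_\bullet$. The edges within each face are the maps of the same construction for the closed inclusions. Commutativity of the cube, coherently, follows as in the proof of the composition formula: every map is a composite of unit and counit transformations, and units and counits compose. Taking cofibers of the front-to-back maps then yields the square of the statement, since $(i_\bullet)_?$ is exact and $\cofiber((i_\bullet)_?(f_\bullet)_?\WallObject{Y_\bullet}\to(i_\bullet)_?\WallObject{X_\bullet}) = (i_\bullet)_?\Tors{f_\bullet}$.

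It remains to show that both faces are pushouts. A colimit of pushout squares is a pushout, so this suffices. For this I would use the description of compact objects of $\ccoShv X$ as strongly continuous functors $\Shv X\to\Sp$. The inclusion of these into $\coShv X=\Fun^L(\Shv X,\Sp)$ is fully faithful and exact, so the squares may be checked there.

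The object $(i_0)_?\WallObject{X_0}$ corresponds to the functor $(X_0)_\sharp i_0^*$. For the front face I therefore need the square of functors
\[
  F\mapsto (X_{01})_\sharp i_{01}^*F,\; (X_0)_\sharp i_0^*F,\; (X_1)_\sharp i_1^*F,\; X_\sharp F
\]
to be a pushout. Here I would use closed descent for sheaves: for the closed cover, $F\simeq (i_0)_*i_0^*F\times_{(i_{01})_*i_{01}^*F}(i_1)_*i_1^*F$. Applying the colimit-preserving functor $X_\sharp$ and using $X_\sharp(i)_*\simeq X_\sharp i_\sharp$ up to the correct identification gives the result. The step I expect to be the main obstacle is that $i_*$ is not $i_\sharp$, and this identification is the delicate point. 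I would instead argue via $(i_0)_\sharp$, the left adjoint of $i_0^*$, which exists for closed ANR subspaces by local contractibility. The claim then reduces to showing that $(i_{01})_\sharp i_{01}^*F\to (i_0)_\sharp i_0^*F\oplus(i_1)_\sharp i_1^*F\to F$ is a cofiber sequence. By passing to right adjoints, this is equivalent to the pullback square $G\simeq (i_0)_*i_0^*G\times_{(i_{01})_*i_{01}^*G}(i_1)_*i_1^*G$, which is the closed descent statement.

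On values, the cosheaf description makes this transparent: $U\mapsto\SS[\type U]$ against $\SS[\type{U\cap X_0}]$ and so on. These form a pushout because $\type U$ is the pushout of $\type{U\cap X_0}$ and $\type{U\cap X_1}$ over $\type{U\cap X_{01}}$, which holds when the pieces are ANRs. I would use this as a sanity check, or as the actual argument on a basis of opens.

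The back face is the same statement for $Y$ with its closed cover $Y_0,Y_1$, pushed forward along the exact functor $f_?$. The hypotheses that the $Y_\bullet$ are ANRs are exactly what this needs.

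Finally, for the K-theory formula: if the $f_\bullet$ are homotopy equivalences, all the corners lie in $\WhiteheadCat X$ by~\cref{thm:torsion-belongs-to-whitehead} together with strong continuity of $(i_\bullet)_?$. Additivity of $\K_0$ on the resulting pushout square then gives the stated identity.
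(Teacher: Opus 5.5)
Your architecture matches the paper's. You build a cube whose two faces are closed-descent squares of (pushed-forward) local Wall objects, get the $Y$-face from the $X$-argument applied to the cover $Y_0\cup Y_1$, and take cofibers of the maps between the faces. However, the step you propose for the ``delicate point'' is false as stated. The restriction $i_0^*\colon\Shv{X}\to\Shv{X_0}$ has no left adjoint in general. Local contractibility does not supply one: it gives $X_\sharp$ and $(\psi_X)_\sharp$, not a left adjoint of closed restriction.

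For instance, take $X=[0,1]$ and $X_0=\{0\}$, so $i_0^*$ is the stalk at $0$, and let $F_n$ be the skyscraper $\SS$ at $1/n$. Each $F_n$ has zero stalk at $0$. Products of sheaves are computed sectionwise, so the stalk of $\prod_n F_n$ is $\colim_N\prod_{n>N}\SS$, whose $\pi_0$ is $\prod_n\ZZ/\bigoplus_n\ZZ\neq 0$. Hence $i_0^*$ does not preserve products, and the endofunctor cofiber sequence $(i_{01})_\sharp i_{01}^*F\to(i_0)_\sharp i_0^*F\oplus(i_1)_\sharp i_1^*F\to F$ that you reduce to does not exist.

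The repair is exactly the paper's argument: pass to right adjoints of the functors $\Shv{X}\to\Sp$ themselves, not of endofunctors of $\Shv{X}$. The functor $(X_0)_\sharp i_0^*$ is a composite of left adjoints with right adjoint $(i_0)_*X_0^*\simeq(i_0)_*i_0^*X^*$. So the right-adjoint square of your $X$-face is the closed-descent pullback $\id\to(i_\bullet)_*i_\bullet^*$ whiskered with $X^*$. Hence that face is a pushout in $\Fun^L(\Shv{X},\Sp)$ without any $(i_0)_\sharp$.

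The paper builds the entire cube this way, by whiskering the descent square with the single morphism $X^*\to f_*Y^*$ (the right adjoint of $Y_\sharp f^*\to X_\sharp$). This yields coherence of the cube for free, which you only assert via ``units and counits compose''. Proper base change $i_\bullet^*f_*\simeq(f_\bullet)_*(i'_\bullet)^*$ then identifies the other face with the pushed-forward $Y$-square. It also identifies the connecting maps with $(i_\bullet)_*$ of the corresponding maps for $f_\bullet$.

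Your value-wise fallback is a sound alternative. The functor $(X_0)_\sharp i_0^*$ sends the free sheaf on an open $U\subseteq X$ to $\SS[\type{U\cap X_0}]$, and pushouts in $\Fun^L(\Shv{X},\Sp)$ can be checked on these generators. The square of anima $\type{U\cap X_\bullet}$ is a pushout because the $U\cap X_\bullet$ are ANRs (being open in ANRs), and closed ANR subspaces of ANRs are cofibrations. This route trades sheaf-theoretic descent for Borsuk's homotopy extension theorem. Your deduction of the K-theory identity is fine.
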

\begin{proof}
  The torsion cosheaf $\Tors f$ is the cofiber of the map of cosheaves $Y_\sharp f^*\to X_\sharp$. Passing to right adjoints, this corresponds to the map $X^*\to f_*Y^*$. Descent for the closed cover gives a pushout diagram of endofunctors $\Shv X\to\Shv X$ of the form
  \[
    \begin{tikzcd}
      \id\arrow[r]\arrow[d]
      & (i^{\vphantom{*}}_1)_*i_1^*\arrow[d]\\
      (i^{\vphantom{*}}_1)_*i_1^*\arrow[r]
      & (i^{\vphantom{*}}_{01})_*i_{01}^*
    \end{tikzcd}
  \]
  We thus get a cubical commutative diagram
  \begin{equation}
    \label{eq:sum-formula-cubical}
    \begin{tikzcd}
      X^*\arrow[rr]\arrow[dd]\arrow[dr]
      && (i^{\vphantom{*}}_1)_*i_1^*X^*\arrow[dr]\arrow[dd]\\
      & f_*Y^* \arrow[rr]\arrow[dd] && (i^{\vphantom{*}}_1)_*i_1^*f_*Y^*\arrow[dd]\\
      (i^{\vphantom{*}}_1)_*i_1^*X^*\arrow[rr]\arrow[dr]
      & & (i^{\vphantom{*}}_{01})_*i_{01}^*X^*\arrow[dr]\\
      & f_*Y^* (i^{\vphantom{*}}_0)_*i_0^*\arrow[rr] && (i^{\vphantom{*}}_{01})_*i_{01}^*f_*Y^*
    \end{tikzcd}
  \end{equation}
  in which the back and front faces are pushouts. Proper base change identifies the front face with the diagram
  \[
    \begin{tikzcd}
      f_*Y^*\arrow[r]\arrow[d]
      & (f_*(\iota^{\vphantom{*}}_1)_*\iota_1^*Y^*\arrow[d]\\
      f_*(\iota^{\vphantom{*}}_1)_*\iota_1^*Y^*\arrow[r]
      & f_*(\iota^{\vphantom{*}}_{01})_*\iota_{01}^*Y^*,
    \end{tikzcd}
  \]
  where we have used $\iota_k$ to denote the inclusion of $Y_k$ into $Y$. The desired pushout square now comes from passing back to left adjoints in~\eqref{eq:sum-formula-cubical} and taking cofibers in the inwards-outwards direction.
\end{proof}
As in~\cite[p.~40]{Luck2002}, one can also deduce a product formula from the preceding two results. We leave this as an exercise to the reader.
\subsection{The torsion cosheaf of a simplicial map}
It is instructive to consider what it means for a map $f\colon Y\to X$ to have $\Tors f\simeq 0$. Note that by~\cref{thm:torsion-is-torsion} and Whitehead's theorem, the condition that $\Tors f\simeq 0$ is stronger than requiring $f$ to be a simple homotopy equivalence.
\begin{thm}\label{thm:acyclic-map}
  Let $f\colon Y\to X$ be a simplicial map between compact triangulated spaces. The follow are equivalent:
  \begin{enumerate}[label=(\roman*)]
  \item $\Tors f\simeq 0$;
  \item $\redHomology^* (|f^{-1}(\mathrm{Star}_X^\circ (\sigma ))|;\ZZ )\simeq 0$ for every simplex $\sigma\in X$; and
  \item $\redHomology^* (|f^{-1}(\sigma )|;\ZZ )\simeq 0$ for every simplex $\sigma\in X$.
  \end{enumerate}
\end{thm}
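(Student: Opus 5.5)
The plan is to reduce (i) to a pointwise statement about the cosheaf $U\mapsto\cofiber(\suspinf{\type{f^{-1}(U)}}\to\suspinf{\type U})$ from the remark after \cref{cons:torsion-cosheaf}, evaluated on open stars. Then (ii) follows by a Hurewicz argument, and (ii)$\Leftrightarrow$(iii) is purely combinatorial.

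\emph{Step 1: (i) is detected on open stars.} $\Tors f$ is a compact object of $\ccoShv X$, so it is determined by a strongly continuous functor $\Shv X\to\Sp$, i.e.\ by the associated cosheaf. That functor vanishes iff it vanishes on the generators $\SS_U$ for $U$ ranging over a basis of $X$ closed under finite intersections; this uses colimit preservation and the fact that every sheaf is built from such $\SS_U$ by colimits. The open stars $\mathrm{Star}^\circ_X(\sigma)$ form such a basis up to the empty set, since $\mathrm{Star}^\circ(\sigma_1)\cap\mathrm{Star}^\circ(\sigma_2)$ is either empty or $\mathrm{Star}^\circ(\sigma_1\cup\sigma_2)$, as in the proof of \cref{prop:triangulation gives decent cover}. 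However, an arbitrary open set is not a finite union of stars. I would therefore work with the stars of all iterated barycentric subdivisions; these are still compatible with $f$ after subdividing $Y$ compatibly. Alternatively, I would use that for a cosheaf, vanishing on a basis closed under intersections propagates to all opens by the cosheaf (Čech) condition. So $\Tors f\simeq0$ iff $\cofiber(\suspinf{\type{f^{-1}(U)}}\to\SS)\simeq0$ for every open star $U$, using that $U$ is contractible.

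\emph{Step 2: Hurewicz.} That cofiber is a connective spectrum: it is the reduced suspension spectrum of $f^{-1}(U)$, shifted up by one. A connective spectrum vanishes iff its $\ZZ$-homology vanishes, since the bottom homotopy group equals the bottom $H\ZZ$-homology group. Hence the vanishing in Step 1 is equivalent to $\redHomology_*(|f^{-1}(U)|;\ZZ)=0$, where the empty preimage counts as nonvanishing because of the $\SS$ in degree $0$. This gives (i)$\Leftrightarrow$(ii), modulo checking that subdivided stars reduce to the original ones. For that I would show that each subdivided star's preimage is homotopy equivalent to the preimage of some open star of $X$, via the same retraction as in Step 3.

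\emph{Step 3: (ii)$\Leftrightarrow$(iii).} Since $f$ is simplicial, the radial deformation of $\mathrm{Star}^\circ(\sigma)$ lifts along $f$. This deformation shrinks the barycentric coordinates of vertices outside $\sigma$ to zero. The lift on $Y$ shrinks the coordinates of vertices $w$ with $f(w)\notin\sigma$. It yields a deformation retraction of $f^{-1}(\mathrm{Star}^\circ\sigma)$ onto $f^{-1}(\sigma^\circ)$, the preimage of the open simplex. It remains to compare the preimages of open and closed simplices, and I expect this to be the main obstacle.

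My first attempt would be to show that $f^{-1}(\sigma)$ is homotopy equivalent to $f^{-1}$ of a regular neighbourhood of $\sigma$. That neighbourhood is the star of $\sigma$ in the second derived subdivision, and its preimage retracts similarly. Using this, I would rerun Step 3 on the subdivision, where the closed simplex $\sigma$ becomes a union of open stars whose intersections are again stars. The Mayer--Vietoris argument then gives: reduced homology of the preimage of every closed simplex vanishes iff it vanishes for every open star. I would carry this out by induction on $\dim\sigma$, writing $f^{-1}(\sigma)$ as $f^{-1}(\partial\sigma)$ glued to $f^{-1}$ of a collar of $\sigma^\circ$. Both implications would then follow inductively, starting from vertices, where $\sigma^\circ=\sigma$.
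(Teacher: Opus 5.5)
Your plan is correct in outline, but it takes a different route from the paper.

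\textbf{Meaning of the open star.} You read $\mathrm{Star}^\circ_X(\sigma)$ as the standard open star, the union of interiors of simplices having $\sigma$ as a face. Its preimage retracts onto $f^{-1}(\sigma^\circ)$, so (ii)$\Leftrightarrow$(iii) becomes a real open-versus-closed comparison, which is why you need an induction on $\dim\sigma$. The paper's \cref{lem:star-retracts-onto-preimage} works with the larger neighbourhood $\{x:\mathrm{supp}(x)\cap\sigma\neq\varnothing\}$, the union of the open stars of the vertices of $\sigma$. This is the reading under which its proof via $G(y)=\sum_{f(v)\in\sigma}y_v$ is valid. That neighbourhood's preimage retracts directly onto $f^{-1}(\sigma)$, so for the paper (ii)$\Leftrightarrow$(iii) is a one-line retraction.

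\textbf{The hard direction.} The paper proves (iii)$\Rightarrow$(i). It uses the sum formula (\cref{prop:sum-formula}) to write $\Tors f$ as a colimit of $(i_\sigma)_?\Tors{f_\sigma}$ over closed simplices. It then checks each $\Tors{f_\sigma}$ on subdivided vertex stars using the nerve theorem. You instead prove (ii)$\Rightarrow$(i) by detecting vanishing of the cosheaf on a basis of all of $X$, combined with Hurewicz. This avoids the sum formula. It also makes explicit that acyclicity of the preimages, not contractibility, is what is needed.

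\textbf{What you leave implicit.} Both your subdivision comparison and your Mayer--Vietoris induction rest on one unstated fact: a simplicial map is trivial over each open simplex. Rescaling barycentric coordinates within the vertex-fibres over $\tau$ gives $f^{-1}(\tau^\circ)\cong\tau^\circ\times f^{-1}(\hat\tau)$ over $\tau^\circ$, where $\hat\tau$ is the barycentre.
\begin{itemize}
\item The retraction of Step 3 only takes a subdivided star's preimage to $f^{-1}(\rho^\circ)$ for $\rho$ in the subdivision. The trivialization is what identifies this with $f^{-1}(\tau^\circ)$ for the simplex $\tau$ of $X$ containing $\rho^\circ$.
\item In your induction, it identifies the overlap as $S^{k-1}\times F$ with $F=f^{-1}(\hat\sigma)$. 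The overlap maps by the two projections to $F$ and to $f^{-1}(\partial\sigma)$, and the latter has the integral homology of $S^{k-1}$ by the nerve theorem applied to its faces.
\end{itemize}
With these identifications, Mayer--Vietoris shows that $F$ is acyclic if and only if $f^{-1}(\sigma)$ is, which gives both implications.

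Finally, your ``\v{C}ech alternative'' in Step 1 cannot replace subdivision. Vanishing on the original stars only propagates to their unions, not to all open sets.
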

We will need the following fact:
\begin{lem}\label{lem:star-retracts-onto-preimage}
  Let $f\colon Y\to X$ be a simplicial map between triangulated spaces. For every simplex $\sigma\in X$, the inclusion
  \[
    |f^{-1}(\sigma )|\hookrightarrow |f^{-1}(\mathrm{Star}_X^\circ (\sigma ))|
  \]
  is a homotopy equivalence.
\end{lem}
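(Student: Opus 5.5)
We will construct an explicit deformation retraction of $|f^{-1}(\mathrm{Star}_X^\circ (\sigma ))|$ onto $|f^{-1}(\sigma )|$, using the simplicial structure of $f$. Here $\mathrm{Star}_X^\circ(\sigma)$ denotes the open star, i.e.\ the union of the open simplices $\tau^\circ$ with $\sigma\subseteq\tau$. Its preimage $f^{-1}(\mathrm{Star}_X^\circ(\sigma))$ is an open subset of $|Y|$. It is a union of open simplices $\rho^\circ$ of $Y$, namely those whose image $f(\rho)$ is a simplex containing $\sigma$, because a simplicial map sends the open simplex $\rho^\circ$ onto the open simplex $f(\rho)^\circ$. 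Similarly, $|f^{-1}(\sigma)|$ is the subcomplex of simplices $\rho$ with $f(\rho)\subseteq\sigma$.

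\textbf{Step 1: the homotopy on each simplex.} Fix a simplex $\rho$ of $Y$ with $f(\rho)\supseteq\sigma$, and split its vertex set as $\rho=\rho'\sqcup\rho''$. Here $\rho'$ is the set of vertices mapping into $\sigma$; it is nonempty because $\sigma\subseteq f(\rho)$. The set $\rho''$ consists of the remaining vertices. A point $y\in\rho^\circ$ has barycentric coordinates $y=\sum_{v\in\rho}t_v v$ with all $t_v>0$. Put $s=\sum_{v\in\rho'}t_v>0$ and $r(y)=s^{-1}\sum_{v\in\rho'}t_v v\in|\rho'|\subseteq|f^{-1}(\sigma)|$. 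Define
\[
  H(y,t)=(1-t)\,y+t\,r(y).
\]
This is the standard straight-line deformation of the open star of a full subcomplex onto that subcomplex, carried out inside $\rho$.

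\textbf{Step 2: well-definedness, continuity and staying in the preimage.} The formula only involves the barycentric coordinates of $y$ at vertices of $Y$, so it agrees on faces. It can be written globally as follows: for $y\in f^{-1}(\mathrm{Star}^\circ_X(\sigma))$, let $s(y)$ be the sum of the barycentric coordinates of $y$ at vertices $v$ with $f(v)\in\sigma$, and normalize. Then $s(y)>0$ on the whole preimage, since $f(y)$ lies in the open star of $\sigma$ and so has positive mass on the vertices of $\sigma$. Hence $H$ is continuous on the preimage. For $t<1$ the point $H(y,t)$ stays in $\rho^\circ$, hence in the preimage. At $t=1$ it lands in $|\rho'|\subseteq|f^{-1}(\sigma)|$. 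Points already in $|f^{-1}(\sigma)|$ have $\rho''=\varnothing$, so they are fixed for all $t$. We still have to check that $|f^{-1}(\sigma)|$ itself lies in the preimage of the open star. If that fails, we work with the appropriate neighborhood instead.

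\textbf{Main obstacle.} The delicate point is exactly this last inclusion, together with the meaning of $|f^{-1}(\sigma)|$. A point of $|f^{-1}(\sigma)|$ may map to a proper face of $\sigma$, which is not in the open star of $\sigma$. So the inclusion in the statement holds only if $f^{-1}(\sigma)$ is read as the subcomplex of simplices mapping \emph{onto}-or-into $\sigma$ in a way compatible with the open star. Otherwise we compare both spaces with the intermediate space $f^{-1}(\sigma)\cap f^{-1}(\mathrm{Star}^\circ_X(\sigma))=f^{-1}(\sigma^\circ)$. We would first check which convention makes the statement correct. Then we would prove the homotopy equivalence either directly via $H$, or by combining $H$, which retracts onto $f^{-1}(\sigma^\circ)$, with the fact that $f^{-1}(\sigma^\circ)\simeq f^{-1}(b_\sigma)\simeq|f^{-1}(\sigma)|$ under the intended reading. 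Here $b_\sigma$ is the barycenter of $\sigma$, and these equivalences come from the product structure of simplicial preimages over the open simplex.
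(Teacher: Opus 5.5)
Your Steps 1--2 build essentially the right homotopy, and your closing observation is correct: if $\mathrm{Star}^\circ_X(\sigma)$ is the usual open star (the union of the $\tau^\circ$ with $\tau\supseteq\sigma$), then for $\dim\sigma>0$ the subcomplex $|f^{-1}(\sigma)|$ is not contained in $f^{-1}(\mathrm{Star}^\circ_X(\sigma))$. But the proposal then stops short of a proof. It ends with a plan to ``check which convention makes the statement correct'', and your fallback route fails at the step $f^{-1}(b_\sigma)\simeq|f^{-1}(\sigma)|$. Take $X=\sigma=[a,b]$ and $Y=[a',b']\sqcup\{c\}$ with $a',c\mapsto a$ and $b'\mapsto b$. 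Then $f^{-1}(\sigma^\circ)$ is an open interval and $f^{-1}(b_\sigma)$ is a point, whereas $|f^{-1}(\sigma)|=Y$ has two components. So with the usual open star the statement is false even up to abstract homotopy equivalence, and no argument through $f^{-1}(\sigma^\circ)$ can prove it.

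The missing step is to fix the reading the paper actually uses. Its proof sets $G(y)=\sum_{f(v)\in\sigma}y_v$, which is your $s(y)$, and identifies $|f^{-1}(\mathrm{Star}^\circ_X(\sigma))|$ with $\{G>0\}$. That is, $\mathrm{Star}^\circ_X(\sigma)$ is taken to be the open neighbourhood $\{x\in X:\sum_{w\in\sigma}x_w>0\}$ of the closed simplex, the union of the open stars of the vertices of $\sigma$; this agrees with the usual open star when $\sigma$ is a vertex.

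With this reading your own global formula finishes the proof directly. The subcomplex $|f^{-1}(\sigma)|$ is the full subcomplex $\{s=1\}\subseteq\{s>0\}$. On $\{s>0\}$, the map $H(y,t)=(1-t)y+t\,r(y)$ with $r(y)=s(y)^{-1}\sum_{f(v)\in\sigma}y_v v$ is a deformation retraction onto it. You only need to allow every carrier $\rho$ with $\rho'\neq\varnothing$, rather than only those with $f(\rho)\supseteq\sigma$, and then check three things:
\begin{itemize}
\item $s(H(y,t))=(1-t)s(y)+t>0$, so the homotopy stays in the domain;
\item $H(y,1)$ lies in the face $\rho'\subseteq|f^{-1}(\sigma)|$;
\item points with $s=1$ are fixed.
\end{itemize}
This is exactly the paper's argument. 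Its formula $y^t_v=y_v/((1-t)+tG(y))$ for $f(v)\in\sigma$, and $(1-t)y_v/((1-t)+tG(y))$ otherwise, traverses the same straight segment from $y$ to $r(y)$, only reparametrised.
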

\begin{proof}
  We can identify $Y$ with the subspace of $\RR\lbrack Y^0\rbrack$ consisting of linear combinations
  \[
    y = \sum_{v\in Y^0}y_v v
  \]
  such that (i) $\sum_{v\in Y^0}y_v = 1$; (ii) $y_v \geq 0$ for each $v\in Y^0$; and (iii) the set $\lbrace v\in Y^0\mid y_v > 0\rbrace$ forms a simplex in $Y$. Given such $y$, we put \[G(y) = \sum_{v\in f^{-1}(\sigma )}y_v.\] Then $G(y) > 0$ if and only if $y\in |f^{-1}(\mathrm{Star}_X^\circ (\sigma ))|$, and $G(y) = 1$ if and only if $y\in f^{-1}(\sigma )$. For $y\in |f^{-1}(\mathrm{Star}_X^\circ (\sigma ))|$, we can then put
  \[
    y_v^t =
    \begin{cases}
      \frac{y_v}{(1-t) + tG(y)},&\text{if }v\in f^{-1}(\sigma ),\\
      \frac{(1-t)y_v}{(1-t) + tG(y)},&\text{otherwise},
    \end{cases}
  \]
  for $0\leq t\leq 1$. We can then define a deformation retraction of $|f^{-1}(\mathrm{Star}_X^\circ (\sigma ))|$ onto $|f^{-1}(\sigma )|$ by sending $(t,y)$ to $\sum_{v\in Y^0}y_v^tv$.
\end{proof}
\begin{proof}[Proof of~\cref{thm:acyclic-map}]
  The equivalence of (ii) and (iii) is an immediate consequence of~\cref{lem:star-retracts-onto-preimage}. To show that (i) implies (ii), suppose that $\Tors f\simeq 0$. Then we have that \[\SS\lbrack f^{-1}(\mathrm{Star}_X^\circ (\sigma ))\rbrack\to\SS\lbrack \mathrm{Star}_X^\circ (\sigma ))\rbrack\] is an equivalence, and base changing to $\ZZ$ we get the desired statement about integral homology.

  Finally, we will show that (iii) implies (i). By a repeated application of~\cref{prop:sum-formula}, we find that
  \[
    \Tors f\simeq\colim_{\sigma\in X}(i_\sigma )_?\Tors{f_{\sigma}},
  \]
  where $i_\sigma\colon \sigma \hookrightarrow X$ denotes the inclusion and $f_\sigma = f\vert_{f^{-1}(\sigma )}\colon f^{-1}(\sigma )\to \sigma $. It will thus suffice to show that $\Tors{f_{\sigma}}\simeq 0$ for each simplex $\sigma\in X$. But $\sigma$ has a basis consisting of subsets of the form $\mathrm{Star}_{\sigma '}^\circ (v )$, where $\sigma '$ is an iterated barycentric subdivision of $\sigma$ and $v$ is a vertex in $\sigma'$. An argument like the one in the proof of~\cref{lem:star-retracts-onto-preimage} shows that $f^{-1}(\sigma ')$ is a deformation retract of $f^{-1}(\sigma )$, and it then follows from the lemma that $f^{-1}(\mathrm{Star}_{\sigma '}^\circ (v ))$ is contractible. For a general open subset $U\subseteq X$, the claim now follows by covering $U$ with open subsets of the form $\mathrm{Star}_{\sigma '}^\circ (v )$ and noting that $\lbrace \mathrm{Star}_{\sigma '}^\circ (v )\rbrace$ and $\lbrace f^{-1}(\mathrm{Star}_{\sigma '}^\circ (v ))\rbrace$ are good covers of $U$ and $f^{-1}(U)$, whence Borsuk's nerve theorem implies the desired conclusion since $\mathrm{Star}_{\sigma '}^\circ (v )\mapsto f^{-1}(\mathrm{Star}_{\sigma '}^\circ (v ))$ induces an isomorphism on nerves. 
\end{proof}
\begin{cor}
  If $f\colon Y\to X$ is a simplicial homotopy equivalence of compact triangulated spaces such that $f^{-1}(\sigma )$ is acyclic for each simplex $\sigma\in X$, then $f$ is a simple homotopy equivalence.
\end{cor}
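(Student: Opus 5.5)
The plan is to combine \cref{thm:acyclic-map}, \cref{thm:torsion-is-torsion} and Whitehead's theorem. By hypothesis each $f^{-1}(\sigma)$ is acyclic. Here I read ``acyclic'' as having vanishing reduced integral homology, and in particular being nonempty, which is the convention under which condition (iii) of \cref{thm:acyclic-map} is meant. With this reading, (iii) holds for $f$, so the implication (iii)$\Rightarrow$(i) of \cref{thm:acyclic-map} gives $\Tors f\simeq 0$ in $\ccoShv X$. This step carries all the geometric content, and it has already been done.

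Next, $f$ is a homotopy equivalence of compact ANRs; triangulated spaces are compact ANRs. So \cref{thm:torsion-belongs-to-whitehead} shows that $\Tors f$ is a compact object of $\WhiteheadCat X$ and has a class $\lbrack\Tors f\rbrack\in\K_0(\WhiteheadCat X)$. Since $\Tors f\simeq 0$, this class vanishes. As $X$ is triangulated, \cref{prop:triangulation gives decent cover} and \cref{thm: epimorphism} give the identification $\K_0(\WhiteheadCat X)\simeq\WhiteheadSpt{\type X}[1]$. Then \cref{thm:torsion-is-torsion}, which applies because $f$ is simplicial, says that $\lbrack\Tors f\rbrack$ corresponds to $\uptau(f)$ under this identification. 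Hence $\uptau(f)=0$.

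Finally, Whitehead's theorem, as recalled in the introduction, states that a homotopy equivalence $f$ between finite complexes is simple if and only if $\uptau(f)=0$. This concludes the argument.

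The only point needing care is bookkeeping rather than mathematics: the convention that acyclic fibres are nonempty, and the disconnected case. If $X$ is disconnected, I would either argue componentwise, or rely on the fact that \cref{thm:torsion-is-torsion} and the definition of $\uptau$ have already been extended to the disconnected case as the earlier sections indicate. I expect no real obstacle here.
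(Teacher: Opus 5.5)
Your proposal is correct and follows exactly the derivation the paper intends. The paper gives no separate proof; the corollary is meant as the immediate combination of (iii)$\Rightarrow$(i) in \cref{thm:acyclic-map}, the identification $\lbrack\Tors f\rbrack=\uptau(f)$ from \cref{thm:torsion-is-torsion}, and Whitehead's theorem, as the remark preceding \cref{thm:acyclic-map} indicates.

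Your reading of ``acyclic'' as excluding empty preimages is also the right one. If some $f^{-1}(\sigma)$ were empty, then by \cref{lem:star-retracts-onto-preimage} $\Tors f$ would take the nonzero value $\SS$ on $\mathrm{Star}_X^\circ(\sigma)$, so (i) would fail.
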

\begin{rmk}
  A slightly stronger condition than the one appearing in the corollary is that of $|f^{-1}(\sigma )|$ being contractible for every $\sigma\in\Sigma$. A simplicial map satisfying this condition is called \tdef{contractible} in~\cite{Cohen1967}, and there it is shown that such maps are simple homotopy equivalences. Thus the object $\Tors f$ can be seen as testing whether $f$ is a simple homotopy equivalence in some controlled, standard way; \cref{thm:torsion-is-torsion} says that passing to K-theory precisely relaxes this control.
\end{rmk}

\printbibliography{}
\end{document}